\documentclass[11pt]{amsart}
\usepackage{amsmath}
\usepackage{amssymb}
\usepackage{amscd}
\usepackage{color}

\def\NZQ{\mathbb}               
\def\NN{{\NZQ N}}

\def\ZZ{{\NZQ Z}}
\def\RR{{\NZQ R}}

%
%
%
%

\newtheorem{Theorem}{Theorem}[section]
\newtheorem{Lemma}[Theorem]{Lemma}
\newtheorem{Corollary}[Theorem]{Corollary}
\newtheorem{Proposition}[Theorem]{Proposition}
\newtheorem{Remark}[Theorem]{Remark}

\newtheorem{Definition}[Theorem]{Definition}

%
%
\let\epsilon\varepsilon
\let\phi=\varphi
\let\kappa=\varkappa

%
%
\textwidth=15cm \textheight=22cm \topmargin=0.5cm
\oddsidemargin=0.5cm \evensidemargin=0.5cm \pagestyle{plain}
\begin{document}

\title{Erratic birational behavior of mappings in positive characteristic}
\author{Steven Dale Cutkosky}
\thanks{Partially supported by NSF grant DMS-2054394}

\address{Steven Dale Cutkosky, Department of Mathematics,
University of Missouri, Columbia, MO 65211, USA}
\email{cutkoskys@missouri.edu}

\keywords{morphism, valuation, positive characteristic}

\begin{abstract} Birational properites of generically finite morphisms $X\rightarrow Y$ of algebraic varieties can be understood locally by a valuation of the function field of $X$.
In finite extensions of algebraic local rings in characteristic zero algebraic function fields which are dominated by a valuation there are   nice monomial forms of the mapping after blowing up enough, which reflect classical invariants of the valuation. Further, these forms are stable upon suitable further blowing up. In positive characteristic algebraic function fields it is not always possible to find a monomial form after blowing up along a valuation, even in dimension two. In dimension two and positive characteristic, after enough blowing up, there are stable forms of the mapping which hold upon suitable sequences of blowing. We give examples showing that even within these stable forms, the forms can vary dramatically (erratically) upon further blowing up.  We construct these examples in defect Artin-Schreier extensions which can have any prescribed distance. As an application, we give an example of an extension of two dimensional regular local rings in a tower of two independent defect Artin-Schreier extensions for which strong local monomialization does not hold. 
\end{abstract}

\maketitle 

\section{Introduction}

Suppose that $\phi:X\rightarrow Y$ is a morphism of algebraic varieties over a field $k$. We would like to find resolutions of singularities $X_1$ of $X$ and $Y_1$ of $Y$ such that there is an induced morphism $X_1\rightarrow Y_1$ which has the simplest possible local structure. In \cite[Problem 6.2.1]{AKMW}, it is asked if, with the assumption that $k$ has characteristic zero,  there always exists such an $X_1\rightarrow Y_1$ which is toroidal (the problem of toroidalization). Certainly no simpler global structure can always be found.
This toroidal form can be found if the varieties have characteristic zero and are of dimension $\le 3$ (\cite{C4}, \cite{C5} and a simplified proof in \cite{C3}).
Stronger local forms than toroidalization are true in complete generality in characteristic zero.  To formulate this result, we use valuations.

The use of valuations converts the study  of birational properties of morphisms into a problem in local commutative algebra. This approach was initiated and   developed by Zariski. Suppose that 
$\phi:Y\rightarrow X$ is a dominant morphism of projective (or proper) $k$-varieties and we have a commutative diagram of morphisms
$$
\begin{array}{ccc}
Y_1&\rightarrow & X_1\\
\downarrow && \downarrow\\
Y&\rightarrow & X
\end{array}
$$
where $Y_1\rightarrow Y$ and $X_1\rightarrow X$ are  birational and projective (or proper). A valuation $\omega$ of the function field $k(Y)$ (which is trivial on $k$)  restricts to a valuation $\nu$ on $k(X)$. The valuations have centers at points $y$ and $x$ on $Y$ and $X$ respectively and have centers $y_1$ and $x_1$ on $Y_1$ and $X_1$ respectively.  The behavior of the morphisms in the diagram near the centers of the valuations is determined completely by the commutative diagram
$$
\begin{array}{ccc}
R_1&\rightarrow & S_1\\
\uparrow && \uparrow\\
R&\rightarrow & S
\end{array}
$$
where $R,S,R_1,S_1$ are the respective local rings $\mathcal O_{X,x}, \mathcal O_{Y,y}, \mathcal O_{X,x_1},\mathcal O_{Y,y_1}$.
Thus, we are able to formulate locally problems in birational geometry in terms of valuation theory and local commutative algebra. 

A classical example of this approach is the problem of local uniformization as formulated by Zariski. Suppose that $R$ is a local domain which is essentially of finite type  over a field $k$ with quotient field $K$. Let  $\nu$ be a valuation of $K$ which dominates $R$. A local uniformization of $R$ along $\nu$ is a birational extension $R\rightarrow R_1$ where $R_1$ is a regular local ring which is essentially of finite type over $R$ and is dominated by $\nu$. 
Zariski proved local uniformization in all dimensions and characteristic zero in \cite{Z2} and
established resolution of singularities in characteristic zero from local uniformization in dimension three \cite{Z3}. Hironaka later proved resolution of singularities in all dimensions and characteristic zero in \cite{H1}, using a different method.  All current proofs of resolution of singularities of 3-folds in positive characteristic are accomplished by first proving local uniformization (\cite{Ab3},  A simplification of this proof in \cite{C6}, Cossart and Piltant \cite{CoP}). 
As of this time, the existence of resolution of singularities in dimension $\ge 4$ is unknown in positive characteristic.
The problem of local uniformization (and resolution of singularities) is closely related to the problem of finding good local forms of mappings along a  valuation, as the construction of a resolution of singularities generally starts with a generically finite projection onto a nonsingular variety.

In characteristic zero, there is a very nice local form for morphisms, called local monomialization. This result is a little stronger than what comes immediately from the assumption that toroidalization is possible.  If $R\rightarrow S$ is an extension of local rings such that the maximal ideal of $S$ contracts to the maximal ideal of $R$ then we say that $S$ dominates $R$. If $S$ is dominated by the valuation ring $\mathcal O_{\omega}$ of a valuation $\omega$ we say that $\omega$ dominates $S$.

\begin{Theorem}\label{locmon}(local monomialization)(\cite{C}, \cite{C5}) Suppose that $k$ is a field of characteristic zero and $R\rightarrow S$ is an extension of regular local rings such that $R$ and $S$ are essentially of finite type over $k$ and $\omega$ is a valuation of the quotient field of $S$ which dominates $S$ and $S$ dominates $R$. Then there is a 
commutative diagram
$$
\begin{array}{ccc}
R_1&\rightarrow & S_1\\
\uparrow&&\uparrow\\
R&\rightarrow &S
\end{array}
$$
such that  $\omega$ dominates $S_1$, $S_1$ dominates $R_1$ and the vertical arrows are products of monoidal transforms; that is, these arrows are factored by the local rings of blowups of prime ideals  whose quotients are regular local rings. In particular, $R_1$ and $S_1$ are regular local rings. Further, $R_1\rightarrow S_1$ has a locally monomial form; that is, there exist regular parameters $u_1,\ldots,u_m$ in $R_1$ and $x_1,\ldots,x_n$ in $S_1$, an $m\times n$ matrix $A=(a_{ij})$ with integral coefficients such that 
$\mbox{rank}(A)=m$ and units $\delta_i\in S_1$ such that
$$
u_i=\delta_i\prod_{j=1}^nx_j^{a_{ij}}
$$
for $1\le i\le m$.
\end{Theorem}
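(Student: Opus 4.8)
The plan is to build $R_1\to S_1$ by a long, intertwined sequence of monoidal transforms, alternately on $S$ and on $R$, driven by two separate engines: resolution of singularities in characteristic zero, used to turn the generators of $\mathfrak m_R$ into monomials (times units) in a regular system of parameters of $S$, and \emph{Perron transforms} --- monoidal transforms of $R$ whose effect on a regular system of parameters is an explicit unimodular substitution of monomials --- used to force the exponent matrix $A$ to acquire full rank $m$. Every blow-up is taken along the center of $\omega$ (respectively of $\nu:=\omega|_{\mathrm{QF}(R)}$), so that the domination relations and the commutativity of the square are automatically preserved.

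First I would record the constraints imposed by valuation theory: since $R$ and $S$ are essentially of finite type over $k$, Abhyankar's inequality bounds the rational rank and the residue transcendence degree of $\nu$ and of $\omega$ by $\dim R$ and $\dim S$. Then comes the preparation step. Using resolution of singularities in characteristic zero (Hironaka \cite{H1}) --- concretely, principalization and embedded resolution of ideals --- one performs monoidal transforms on $R$ and $S$, inductively reducing the number of generators of $\mathfrak m_R$ not yet in monomial form, until there are regular parameters $u_1,\dots,u_m$ of $R_1$, regular parameters $x_1,\dots,x_n$ of $S_1$, and units $\delta_i\in S_1$ with $u_i=\delta_i\prod_j x_j^{a_{ij}}$. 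Bringing each generator individually into this shape --- rather than merely making the ideal $\mathfrak m_R S_1$ monomial --- is the delicate point, and it is where characteristic zero is used a second time: tame ramification (Abhyankar's lemma) and the vanishing of the defect of $\mathrm{QF}(S)/\mathrm{QF}(R)$ ensure that all ramification of the extension is visible in the value groups, so nothing hidden can obstruct the reduction. If $\mathrm{QF}(S)/\mathrm{QF}(R)$ is transcendental, one first splits off a transcendence basis, whose elements may be retained as parameters of $S_1$ not occurring among the $u_i$, reducing to the case of a finite extension.

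For the rank condition, suppose $u_i=\delta_i\prod_j x_j^{a_{ij}}$ already holds but $\mathrm{rank}(A)<m$; equivalently, some nontrivial monomial $\prod_i u_i^{c_i}$ (with integers $c_i$ not all zero) lies in $S_1^{\times}$. I would then apply a Perron transform to $R_1$, replacing $u_1,\dots,u_m$ by a suitable monomial change $u_i'=\prod_\ell u_\ell^{p_{i\ell}}$, together with a compatible further blow-up of $S_1$ so that the new parameters of $R$ are again units times monomials in new parameters of $S$. Perron's algorithm, fed by the rational rank of $\nu$, supplies a nonnegative integer invariant that strictly decreases under such a step, so after finitely many of them $\mathrm{rank}(A)=m$; the $u_i$ then form a regular system of parameters of $R_1$ whose images in $\mathrm{QF}(S_1)^{\times}$ are multiplicatively independent modulo $S_1^{\times}$, which is exactly the assertion.

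The step I expect to be the main obstacle is the bookkeeping that keeps the two engines compatible: each Perron transform of $R$ forces a matching modification of $S$ that can destroy a monomial form achieved earlier, so one is really running an intertwined double induction, and its termination rests squarely on the absence of defect --- precisely the mechanism that collapses in positive characteristic, as the examples constructed in the remainder of the paper demonstrate.
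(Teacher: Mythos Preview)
The paper does not prove Theorem~\ref{locmon}; it is quoted in the introduction as a known result, with proof deferred entirely to the cited references \cite{C} and \cite{C5}. So there is no proof in the present paper against which to compare your proposal.

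That said, your outline is broadly faithful to the strategy of \cite{C}. The two engines you name --- principalization/embedded resolution to achieve a monomial form, and Perron transforms to force $\mathrm{rank}(A)=m$ --- are indeed the core of that argument, and you are right that the main difficulty is keeping them compatible, since a Perron step on the $R$ side can disturb monomiality on the $S$ side and vice versa. Your invocation of the absence of defect in characteristic zero as the reason the intertwined induction terminates is also the correct moral.

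Where your sketch is thin is exactly where the actual proof in \cite{C} is long: you assert that ``Perron's algorithm, fed by the rational rank of $\nu$, supplies a nonnegative integer invariant that strictly decreases,'' but you do not identify that invariant or explain why a Perron transform together with the compensating blow-up of $S$ strictly lowers it. In \cite{C} this is a substantial case analysis organized by the rational rank and the dimension, and the invariant is not a single integer but a lexicographic tuple; the termination is far from a one-line remark. Likewise, your reduction of the transcendental case to the finite case by ``splitting off a transcendence basis'' hides real work: one must arrange that the chosen transcendence basis is compatible with the valuation and with the monoidal transforms already performed. None of this is wrong as a plan, but as written it is a table of contents rather than a proof.
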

The difficulty in the proof is to obtain the condition that $\mbox{rank}(A)=m$. To do this, it is necessary to blow up above both $R$ and $S$.

In the case when the extension of quotient fields $K\rightarrow L$ of the extension $R\rightarrow S$ is a finite extension and $k$ has characteristic zero, it is possible to find a local monomialization such that the structure of the matrix of coefficients recovers classical invariants of the extension of valuations in $K\rightarrow L$, and this form holds stably along suitable sequences of birational morphisms which generate the respective valuation rings. This form is called strong local uniformization. It is established for rank 1 valuations in \cite{C} and for general valuations in \cite{CP}. The case which has the simplest form and will be of interest to us in this paper is when the valuation has rational rank 1. In this case, if $R_1\rightarrow S_1$ is a strong local monomialization, then there exist regular parameters $u_1,\ldots, u_m$ in $R_1$ and $v_1,\ldots,v_m$ in $S_1$ , a positive integer $a$ and a unit $\delta\in S_1$ such that
\begin{equation}\label{N20}
u_1=\delta v_1^a, u_2=v_2,\ldots,u_m=v_m.
\end{equation}

The stable forms of mappings in positive characteristic and dimension $\ge 2$ are much more complicated. For instance, local monomialization does not always hold. An example is given in \cite{C2} where $R\rightarrow S$ are local rings of points on nonsingular algebraic surfaces over an algebraically closed field $k$ of positive characteristic $p$ and $k(X)\rightarrow k(Y)$ is finite and separable. 

This leads to the question of determining the best local forms that are possible along a valuation for a generically finite morphism of surfaces in positive characteristic. In this paper, we will consider this situation,  assuming that $k$ is algebraically closed of positive characteristic $p$.

The obstruction to local monomialization is the defect.   The defect $\delta(\omega/\nu)$, which is a power of the residue characteristic $p$ of $\mathcal O_{\omega}$,  is defined and its basic properties developed in \cite[Chapter VI, Section 11]{ZS2}, \cite{K0}, \cite[Section 7.1]{CP}. The defect is discussed in Subsection \ref{Subsecnot}. We have the following theorem, showing that the defect is the only obstruction to strong local monomialization for maps of surfaces.

\begin{Theorem}\label{nondef}(\cite[Theorem 7.35]{CP}) Suppose that $K\rightarrow L$ is a finite, separable extension of algebraic function fields over an algebraically closed field $k$ of characteristic $p>0$, $R\rightarrow S$ is an extension of local domains such that $R$ and $S$ are essentially of finite type over $k$ and the quotient fields of $R$ and $S$ are $K$ and $L$ respectively such that $S$ dominates $R$. Suppose that $\omega$ is valuation of $L$ which dominates $S$. Let $\nu$ be the restriction of $\omega$ to $K$. Suppose that the extension is defectless ($\delta(\omega/\nu)=1$). Then the conclusions of Theorem \ref{locmon} hold. In particular, $R\rightarrow S$ has a local monomialization (and a strong local monomialization) along $\omega$. 
\end{Theorem}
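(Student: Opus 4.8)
The plan is to reduce to an extension of two‑dimensional regular local rings, classify the valuation by Abhyankar's inequality, and then in each case exploit the fundamental equality for defectless extensions to build the monomial form by blowing up on both sides.

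First I would reduce to the case that $R$ and $S$ are regular local rings of dimension two. Since $K$ and $L$ have transcendence degree two over $k$ and $\omega$ dominates $S$ which dominates $R$, resolution of singularities of surfaces, which holds in every characteristic by Abhyankar and Lipman, allows us to replace $R$ by the local ring of $\nu$ on a resolution of a projective model of $K$, and likewise for $S$; since the vertical arrows in the desired diagram are products of monoidal transforms, this replacement only strengthens what we must prove. By Abhyankar's inequality, the rational rank of $\omega$ plus the transcendence degree over $k$ of the residue field of $\mathcal{O}_\omega$ is at most two, so $\omega$ is of one of three combinatorial types: divisorial; of rational rank two; or of rational rank one and rank one with residue field algebraic over $k$. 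The restriction $\nu$ is of the same type, and I would treat the three cases separately, the rational rank one cases being those in which the form \eqref{N20} is to be produced.

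For the core argument I would pass to the henselizations $R^h \to S^h$ (equivalently, to appropriate completions). In the henselian setting $\omega$ is the unique extension of $\nu$ to $L$, and the fundamental equality reads $[L:K] = \delta(\omega/\nu)\, e(\omega/\nu)\, f(\omega/\nu)$, where $e$ is the index $[\omega(L^\times):\nu(K^\times)]$ and $f$ the residue degree, in the rank one case, while the rank two case is handled by writing $\omega$ as a composite of rank one valuations and applying this at each level. The defectless hypothesis $\delta(\omega/\nu)=1$ then says $[L:K] = e(\omega/\nu)\, f(\omega/\nu)$: the whole degree of the extension is ramification together with residually algebraic growth, with nothing hidden in the defect. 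Now I would apply local uniformization for surfaces to blow up $R$ until $\nu$ is monomial in a regular system of parameters $u_1,u_2$ of $R_1$, then blow up $S$ to resolve the pullback and make $\omega$ monomial in parameters $x_1,x_2$ of $S_1$, and read off the exponent matrix: the ramification index appears as a nontrivial power $a$ on the variable carrying the value‑group growth, and the residue degree $f$ (nontrivial only in the divisorial case, where $k$ algebraically closed still permits residue extensions coming from function fields of curves) is absorbed after a further blow up of $S$ that moves a generator of the residue extension into a parameter. Finally one arranges $\operatorname{rank}(A)=m$ by alternately blowing up $R_1$ and $S_1$ until the images of $u_1,\dots,u_m$ generate a finite‑index subgroup of the group generated by the $\omega$‑values of $x_1,\dots,x_n$; as remarked after Theorem \ref{locmon}, this is precisely the step forcing one to blow up above both rings.

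The step I expect to be the main obstacle, and where the defectless hypothesis is genuinely needed, is showing both that a monomial form can be reached at all and that, for the strong version, it persists along every further sequence of monoidal transforms generating $\mathcal{O}_\omega$ and $\mathcal{O}_\nu$. In the generic stable forms for surfaces in positive characteristic there appear error terms that cannot be eliminated by any further blowing up when the defect is nontrivial, which is exactly the pathology studied in the rest of this paper, so the argument must track, via Ostrowski's lemma and the behaviour of the value group and residue field under monoidal transforms, that $\delta(\omega/\nu)=1$ forces each such error term to become monomial after finitely many blow ups and to remain so thereafter. Descending the resulting commutative diagram from the henselizations back to the given rings $R$ and $S$, using that monoidal transforms and the domination relation descend, completes the proof; keeping the precise shape of $A$ through this process yields the strong local monomialization, which in the rational rank one case is the form \eqref{N20}.
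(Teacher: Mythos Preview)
The paper does not give a proof of this theorem: it is stated in the introduction with the citation \cite[Theorem 7.35]{CP} and is quoted as a known result from Cutkosky--Piltant, \emph{Ramification of Valuations}, Adv.\ Math.\ 183 (2004). There is therefore no proof in the present paper to compare your proposal against.

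That said, a few remarks on your sketch relative to the machinery the paper does develop. Your reduction to two-dimensional regular local rings is consistent with the context (the sentence introducing the theorem speaks of ``maps of surfaces''), and your case split by Abhyankar type is the standard opening move. But the substance of the argument---getting from the defectless hypothesis to a monomial form that is \emph{stable} under further blowing up---is precisely where your outline is thinnest. You write ``blow up $R$ until $\nu$ is monomial \ldots\ then blow up $S$ \ldots\ and read off the exponent matrix'', but this is where all the work lies: one must control the complexity invariant (Proposition~\ref{Prop2*}), which equals $e(\omega/\nu)\delta(\omega/\nu)$, and then show that when $\delta=1$ the stable form is forced to be strongly monomial. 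For the Artin-Schreier case the present paper makes this explicit in Proposition~\ref{PropositionSB}, where defectlessness is equivalent to the stable form being of type~2 (the strongly monomial form). Your henselization/descent framing is not how the argument in \cite{CP} is organized; there the analysis stays with the sequences of quadratic transforms and the well-prepared forms of Section~\ref{Sec2Ext}, tracking the complexity directly rather than passing through completions and descending. Your paragraph on ``the main obstacle'' correctly identifies the issue but does not indicate a mechanism for resolving it; if you want a self-contained proof you should look at the structure of \cite[Section 7]{CP}, where the stable complexity and the Jacobian ideal are the controlling invariants.
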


Suppose that 
$K\rightarrow L$ is a Galois extension of fields of characteristic $p>0$  and $\omega$ is a valuation of $L$, $\nu$ is the restriction of $\omega$ to $K$. Then there is a classical tower of fields (\cite[page 171]{End})
$$
K\rightarrow K^s\rightarrow K^i\rightarrow K^v\rightarrow L.
$$
where $K^s$ is the splitting field, $K^i$ is the inertia field, $K^v$ is the ramification field and the extension $K\rightarrow K^v$ has no defect. Thus the essential difficulty comes from the extension from  $K^v$ to $L$ which could have defect. The extension $K^v\rightarrow L$ is a tower of Artin-Schreier extensions, so the Artin-Schreier extension is of fundamental importance in this theory.  

Kuhlmann has extensively studied defect in Artin-Schreier extensions in \cite{Ku}. He separated these extensions into  dependent and independent defect Artin-Schreier extensions. This definition is reproduced in Subsection \ref{Galois}.
Kuhlmann  also defined an invariant called the distance to distinguish the natures of Artin-Schreier extensions. This definition is  given in Subsections \ref{SecDist} and \ref{Galois}.

We now specialize to the case of  a finite separable  extension $K \rightarrow L$ of two   dimensional algebraic function fields  over an algebraically closed field $k$ of characteristic $p>0$, and suppose that $\omega$ is a valuation of $L$ which is trivial on $k$ and $\nu$ is the restriction of $\omega$ to $K$.  If $L/K$ has defect then $\omega$ must have  rational rank 1 and be nondiscrete. We will assume that $\omega$ has rational rank 1 and is nondiscrete for the remainder of the introduction.

 With these restrictions, the distance $\delta$ of an Artin-Schreier extension 
is $\le$ $0^{-}$ when the extension has  defect. If it is a defect extension with $\delta=0^{-}$  then it is  an independent defect extension. If it is a defect extension and the distance is less than $0^{-}$ then the extension is a dependent defect extension.

A quadratic transform along a valuation is the center of the valuation at the blow up of a maximal ideal of a regular local ring. There is the sequence of quadratic transforms along $\nu$ and $\omega$
\begin{equation}\label{In2}
R\rightarrow R_1\rightarrow R_2\rightarrow \cdots\mbox{ and }S\rightarrow S_1\rightarrow S_2\rightarrow\cdots.
\end{equation}
We have that $\cup_{i=1}^{\infty}R_i=\mathcal O_{\nu}$, the valuation ring of $\nu$, and $\cup_{i=1}^{\infty}S_i=\mathcal O_{\omega}$, the valuation ring of $\omega$.
 These sequences can be factored by standard quadratic transform sequences (defined in Section \ref{Sec2Ext}).  It is shown in \cite{CP} that given positive integers $r_0$ and $s_0$, there exists $r\ge r_0$ and $s\ge s_0$ such that $R_r\rightarrow S_s$ has the following  form:
\begin{equation}\label{In1}
u=\delta x^a, v=x^b(y^d\gamma+x\Omega)
\end{equation}
where $u,v$ are regular parameters in $R_r$, $x,y$ are regular parameters in $S_s$, $\gamma$ and $\tau$ are units in $S_s$,
$\Omega\in S_s$, $a$ and $d$ are positive integers and $b$ is a non negative integer. If we choose $r_0$  sufficiently large, then
we have  that the complexity $ad$ of the extension $R_r\rightarrow S_s$  is a constant which depends on the extension of valuations, which we call the stable complexity of (\ref{In2}). When $R_r\rightarrow S_s$ has this stable complexity, we  call the forms (\ref{In1}) stable forms.

  The strongly monomial form is the case when $b=0$ and $d=1$; that is, after making a change of variables in $y$,
  $$
  u=\delta x^a, v=y.
  $$

As we observed earlier (Theorem \ref{nondef}) if the extension $K\rightarrow L$ has no defect, then the stable form is the strongly monomial form. If there is defect, then it is possible for the $a$ and $d$ in stable forms along a valuation to vary wildly, even though their product $ad$ is fixed by the extension, as we will see in Theorem \ref{Example1}.
An interesting open question is if we can always find stable local forms (\ref{In1}) with $b=0$.

We make an extensive study of the local forms  which can occur in an Artin-Schreier extension under a sequence of quadratic transforms in Section \ref{SecCalc}.
In an Artin-Schreier extension, the stable complexity is either 1 or $p$. If the stable complexity is 1, then a stable form $R_r\rightarrow S_s$ is unramified. If the stable complexity is $p$, then a stable form $R_r\rightarrow S_s$ is either of type 1 or of type 2, as defined below, and the type can vary within the sequences (\ref{In2}). The two types are defined as follows. There are regular parameters $u_r,v_r$ in $R_r$ and $x_s, y_s$ in $S_s$ such that if $R_r\rightarrow S_s$ has type 1, then 
\begin{equation}\label{In3}
u_r=x_s, v_r=y_s^p\gamma+x_s\Sigma
\end{equation}
 where $\gamma$ is a unit in $S_s$ and $\Sigma\in S_s$.
If $R_r\rightarrow S_s$ is of type 2, then  
\begin{equation}\label{In4}
u_r=\gamma x_s^p, v_r=y_s
\end{equation}
 where $\gamma,\tau$ are units in $S_s$ and $\Omega\in S_s$. The Artin-Schreier extension $L/K$ has no defect if and only if the stable forms $R_r\rightarrow S_s$ are   of type 2 for $r\gg 0$ (Proposition  \ref{PropositionSB}). Observe that type 2 is the condition of being strongly monomial.
 
 In Theorem \ref{Example1}, it is shown that we can construct  defect Artin-Schreier extensions with any prescribed (nonpositive) distance and any  prescribed switching between types.  Since we are constructing defect extensions, we must impose the condition that the stable forms are not 
eventually always of type 2.

The construction is such that if 
\begin{equation}\label{In5}
R=R_0\rightarrow R_1\rightarrow R_2\rightarrow\cdots\mbox{ and }S=S_0\rightarrow S_1\rightarrow S_2\rightarrow \cdots
\end{equation}
are the sequences of standard quadratic transform sequences along the valuation, then $R_i\rightarrow S_i$ is a stable form for all $i$, and if $\Phi:\NN\rightarrow \{1,2\}$ is any function (the prescribed switching) which is not equal to 2 for all sufficiently large integers, then $R_i\rightarrow S_i$ is of type $\Phi(i)$ for all $i$.

We use a formula derived in Proposition \ref{Prop100} to compute distances from sequences (\ref{In2}) in Artin-Schreier extensions by Piltant and Kuhlmann \cite{KP}. For the reader's convenience, we give a proof of this formula in the appendix to this paper. 

We analyze in Section \ref{Example} an example in \cite{CP}, showing failure of strong local monomialization.    It is a tower of two defect Artin-Schreier extensions, each  of the type of Theorem \ref{Example1}. The first extension is of type 1 for even integers and of type 2 for odd integers. The second extension is of type 2 for even integers and of type 1 for odd integers. The composite gives a sequence of extensions of regular local  rings $R_i\rightarrow S_i$, where $R_i$ has regular parameters $u_i,v_i$ and $S_i$ has regular parameters $x_i,y_i$ such that the stable form is 
\begin{equation}\label{eqN25}
u_i=\gamma x_i^{p}, v_i=y_i^p\tau+x_i\Omega
\end{equation}
for all $i$.

Using the formula of Proposition \ref{Prop100}, we compute the distances of these two Artin-Schreier extensions,  concluding  that both extensions are  dependent. We show that the first Artin-Schreier extension has distance $(-\frac{p^4-2}{p^4-1})^-$ and the second Artin-Schreier extension has distance $(-\frac{cp^3+(c-1)p^2+cp+c}{p^4-1})^-$, where $c$ is a number occuring in the equation defining the second extension. The first of these distances was computed in \cite{EG} by a different method.


In Theorem \ref{Example3}, we show that this type of example can be constructed in a tower of two independent defect extensions. In particular, this gives an example in a tower of two independent defect Artin-Schreier extensions for which strong local monomialization does not hold.

Suppose that  $K\rightarrow L$ is a finite extension of  fields of positive characteristic and $\omega$ is a valuation of $L$  with restriction $\nu$ to $K$ such that $\omega$ is the unique extension of $\nu$ to $L$. It is known that there is no defect in the extension if and only if there is a finite generating sequence in $L$ for the valuation $\omega$ over $K$ (\cite{Va}, \cite{NS}). The calculation of generating sequences for extensions of Noetherian local rings which are dominated by a valuation is extremely difficult. This has been accomplished for two  dimensional  regular local rings  in \cite{Sp} and \cite{CV} and for many hypersurface singularities above a regular local ring of arbitrary dimension in \cite{CMT}.

The nature of a generating sequence in an extension of  $S$ over $R$ determines the nature of the mappings in the stable forms. It is shown in \cite[Theorem 1]{C8} that if $R\rightarrow S$ is an extension of two dimensional excellent regular local rings whose quotient fields give a finite extension $K\rightarrow L$ and $\omega$ is a valuation of $L$ which dominates $S$ then the extension is without defect if and only if  there exist sequences of quadratic transform $R\rightarrow R_1$ and $S\rightarrow S_1$ along $\nu$ 
such that $\omega$ has  a finite generating sequence in $S_1$ over $R_1$.
This shows us that we can expect good stable forms (as do hold by Theorem \ref{nondef}) if there is no defect, but not otherwise.
 
 I thank  Franz-Viktor Kuhlmann  and Olivier Piltant  for  
telling me about the beautiful formulas from \cite{KP}, comparing distance and ramification cuts in an Artin-Schreier extension,
and sharing their manuscript \cite{KP} with me. For the reader's convenience, I give proofs of these formulas in the appendix. 


In \cite{KP}, Kuhlmann and Piltant compute for Artin-Schreier extensions of two dimensional algebraic function fields over an algebraically closed field,  the possible distances which occur for valuations, with the assumption  that the value group is not $p$ divisible, and conclude that this number is finite. In the later paper \cite{BK}, it is shown that quite generally there are distance bounds for Artin-Schreier extensions, and in particular in the case considered in this paper of two dimensional algebraic function fields over an algebraically closed field, the bound is 4.

\section{Preliminaries}\label{SecPre}
\subsection{Some notation}\label{Subsecnot}  Let $K$ be a field with a valuation $\nu$.  The valuation ring of $\nu$ will be donoted by $\mathcal O_{\nu}$, $\nu K$ will denote the value group of $\nu$ and $K\nu$ will denote the residue field of $\mathcal O_{\nu}$. 

The maximal ideal of a local ring $A$ will be denoted by $m_A$. If $A\rightarrow B$ is an extension (inclusion) of local rings such that $m_B\cap A=m_A$ we will say that $B$ dominates $A$. If a valuation ring $\mathcal O_{\nu}$ dominates $A$ we will say that the valuation $\nu$ dominates $A$.

Suppose that $K$ is an algebraic function field over a field $k$. An algebraic local ring $A$ of $K$ is a local domain which is a localization of a finite type $k$-algebra whose quotient field is $K$. A $k$-valuation of $K$ is a valuation of $K$ which is trivial on $k$.

Suppose that $K\rightarrow L$ is a finite algebraic extension of fields, $\nu$ is a valuation of $K$  and $\omega$ is an extension of $\nu$ to $L$. Then the reduced ramification index of the extension is $e(\omega/\nu)=[\omega L:\nu K]$ and the residue degree of the extension is $f(\omega/\nu)=[L\omega:K\nu]$.

 The defect $\delta(\omega/\nu)$, which is a power of the residue characteristic $p$ of $\mathcal O_{\omega}$,  is defined and its basic properties developed in \cite[Chapter VI, Section 11]{ZS2}, \cite{K0} and \cite[Section 7.1]{CP}.
 In the case that $L$ is Galois over $K$, we have the formula 
 \begin{equation}\label{found8}
 [L:K]=e(\omega/\nu)f(\omega/\nu)\delta(\omega/\nu)g
 \end{equation}
 where $g$ is the number of  extensions of $\nu$ to $L$. In fact, 
 we have  the equation (c.f. \cite{Ku} or Section 7.1 \cite{CP})
$$
|G^s(\omega/\nu)|=e(\omega/\nu)f(\omega/\nu)\delta(\omega/\nu),
$$
where $G^s(\omega/\nu)$ is the decomposition group of $L/K$. 

If $K\rightarrow L$ is a finite Galois extension, then we will denote the Galois group of $L/K$ by $\mbox{Gal}(L/K)$.

\subsection{Initial and final segments and cuts}
We review some basic material about cuts in totally ordered sets from \cite{Ku}.
Let $(S,<)$ be a totally ordered set. An initial segment of $S$ is a subset $\Lambda$ of $S$ such that if $\alpha\in \Lambda$ and $\beta<\alpha$ then $\beta\in \Lambda$. A final segment of $S$ is a subset $\Lambda$ of $S$ such that if $\alpha\in \Lambda$ and $\beta>\alpha$ then $\beta\in \Lambda$. A cut in $S$ is a pair of sets $(\Lambda^L,\Lambda^R)$ such that $\Lambda^L$ is an initial segment of $S$ and $\Lambda^R$ is a final segment of $S$ satisfying $\Lambda^L\cup \Lambda^R=S$ and $\Lambda^L\cap \Lambda^R=\emptyset$.
If $\Lambda_1$ and $\Lambda_2$ are two cuts in $S$, write $\Lambda_1<\Lambda_2$ if $\Lambda_1^L\subsetneq \Lambda_2^{L}$.
Suppose that $S\subset T$ is an order preserving inclusion of ordered sets and $\Lambda=(\Lambda^L,\Lambda^R)$ is a cut in $S$. Then define the cut induced by $\Lambda=(\Lambda^L,\Lambda^R)$ in $T$ to be the cut 
$\Lambda\uparrow T=(\Lambda^L\uparrow T,T\setminus (\Lambda^L\uparrow T))$ where $\Lambda^L\uparrow T$ is the least initial segment of $T$ in which $\Lambda^L$ forms a cofinal subset. 

We embed $S$ in the set of all cuts of $S$ by sending $s\in S$ to 
$$
s^+=(\{t\in S\mid t\le s\},\{t\in S\mid t>s\}).
$$
we may identify $s$ with the cut $s^+$. Define
$$
s^{-}=(\{t\in S\mid t< s\},\{t\in S\mid t\ge s\}).
$$
Given a cut $\Lambda=(\Lambda^L,\Lambda^R)$, we define 
$-\Lambda=(-\Lambda^R,-\Lambda^L)$ where
$-\Lambda^L=\{-s\mid s\in \Lambda^L\}$ and $-\Lambda^R=\{-s\mid s\in \Lambda^R\}$.
We have that if $\Lambda_1$ and $\Lambda_2$ are cuts, then $\Lambda_1<\Lambda_2$ if and only if $-\Lambda_2<-\Lambda_1$.

Observe that for $s\in S$, $-s=-(s^+)=(-s)^-$ and $-(s^-)=(-s)^+=-s$.

\subsection{Distances}\label{SecDist}
Let $K\rightarrow L$ be an   extension of  fields and  $\omega$ be a valuation of $L$ with restriction $\nu$ to $K$. 
Let $\widetilde{\nu K}$ be the divisible hull of  $\nu K$. Suppose that $z\in L$. Then the distance of $z$ from $K$ is defined in \cite[Section 2.3]{Ku}  to be the cut $\mbox{dist}(z,K)$ of $\widetilde{\nu K}$ in which  the initial segment of $\mbox{dist}(z,K)$ is the least initial segment of
$\widetilde{\nu K}$ in which $\omega(z-K)$ is cofinal. That is, 
$$
\mbox{dist}(z,K)=(\Lambda^L(z,K),\Lambda^R(z,K))\uparrow\widetilde{\nu K}
$$
where 
$$
\Lambda^L(z,K)=\{\omega(z-c)\mid c\in K\mbox{ and }\omega(z-c)\in \nu K\}.
$$
The following notion of equivalence is defined in \cite[Section 2.3]{Ku}. If $y,z\in L$, then $z\sim_Ky$ if  
$\omega(z-y) >\mbox{dist}(z,K)$.

\subsection{Higher ramification groups}\label{SubsecHRG} We recall material from pages 78 and 79 of \cite{ZS2}.
Suppose that $K\rightarrow L$ is a finite Galois extension with Galois group $G$ and 
$\omega$ is a valuation of $L$. Suppose that $I\subset \mathcal O_{\omega}$ is an ideal. Associate to $I$  higher ramification subgroups of $G$ by 
$$
G_I=\{s\in G\mid s(x)-x\in I\mbox{ for every }x\in \mathcal O_{\omega}\},
$$
$$
H_I=\{s\in G\mid s(x)-x\in Ix\mbox{ for every }x\in L\}.
$$
The group $H_I$ is denoted by $G_I'$ in \cite{KP}.
We always have that $H_I\subset G_I$. If $I\subset J$ then $G_I\subset G_J$ and $H_I\subset H_J$.

\subsection{Artin-Schreier extensions}\label{Galois}
Let $K\rightarrow L$ be an  Artin-Schreier extension of  fields of characteristic $p>0$ and  $\omega$ be a valuation of $L$ with restriction $\nu$ to $K$. The field $L$ is Galois over $K$ with Galois group $G\cong\ZZ_p$, where $p$ is the characteristic of $K$.

Let $\Theta\in L$ be an Artin-Schreier generator of $K$; that is, there is an expression
$$
\Theta^p-\Theta = a
$$
for some $a\in K$. We have that 
$$
\mbox{Gal}(L/K)\cong \ZZ_p=\{{\rm id}, \sigma_1,\ldots,\sigma_{p-1}\},
$$
where $\sigma_i(\Theta)=\Theta+i$.

Since $L$ is Galois over $K$, we have that $ge(\omega/\nu)f(\omega/\nu)\delta(\omega/\nu)=p$ where $g$ is the number of extensions of $\nu$ to $L$. So  we either have that $g=1$ or $g=p$.
If $g=1$, then $\omega$ is the unique extension of $\nu$ to $L$ and either $e(\omega/\nu)=p$ and $\delta(\omega/\nu)=1$ or $e(\omega/\nu)=1$ and $\delta(\omega/\nu)=p$. In particular, the extension is defect if and only if is an immediate extension ($e=f=1$) and $\omega$ is the unique extension of $\nu$ to $L$.

For $\alpha\in \omega L$, we have an associated ideal $I_{\alpha}=\{f\in \mathcal O_{\omega}\mid \omega(f)\ge \alpha\}$.  We have that $\beta<\alpha$ implies $I_{\alpha}\subset I_{\beta}$.
Thus $\{\alpha\in \omega L\mid G_{I_{\alpha}}=1\}$ is a final segment of $\omega L$. We define the ramification cut ${\rm Ram}(\omega/\nu)$ of the valued field extension $L/K$ to be the cut in  $\widetilde{ \nu K}$ induced by this final segment; that is, ${\rm Ram}(\omega/\nu)=({\rm Ram}(\omega/\nu)^L,{\rm Ram}(\omega/\nu)^R)$ where ${\rm Ram}(\omega/\nu)^R$ is the smallest
final segment of $\widetilde{\nu K}$ in which $\{\alpha\in \omega L\mid G_{I_{\alpha}}=1\}$
is coinitial.

From now on in this subsection, suppose that $L$ is a defect extension of $K$. By \cite[Lemma 4.1]{Ku}, the distance 
$\delta=\mbox{dist}(\Theta,K)$ does not depend on the choice of Artin-Schreier generator $\Theta$, so $\delta$ can be called the distance of the Artin-Schreier extension. Since $L/K$ is an immediate extension, the set $\omega(\Theta - K)$ is an initial segment in $\nu K$ which  has no maximal element by \cite[Theorem 2.19]{Ku}.

 We have, since the extension is defect, that
\begin{equation}\label{N22}
\delta={\rm dist}(\Theta,K)\le 0^-
\end{equation}
by \cite[Corollary 2.30]{Ku}.

A defect Artin-Schreier extension $L$ is defined in \cite[Section 4]{Ku} to be a dependent defect Artin-Schreier extension if there exists an immediate purely inseparable extension $K(\eta)$ of $K$ of degree $p$ such that $\eta\sim_K\Theta$. Otherwise, $L/K$ is defined to be an independent defect Artin-Schreier defect extension. We have by \cite[Proposition 4.2]{Ku} that for a defect Artin- Schreier extension,
\begin{equation}\label{eqN23}
\mbox{$L/K$ is independent if and only if the distance $\delta={\rm dist}(\Theta,K)$  satisfies $\delta=p\delta$.}
\end{equation}



\subsection{Extensions of rank 1 valuations in an Artin-Schreier extension}\label{Rank1AS}
In this subsection, we suppose that $L$ is an Artin-Schreier extension of a field $K$ of characteristic $p$, $\omega$ is a  rank 1 valuation of $L$ and $\nu$ is the restriction of $\omega$ to $K$. We suppose that $L$ is a defect extension of $K$. To simplify notation, we suppose that we have an embedding of $\nu L$ in $\RR$. 
Since $L$ has defect over $K$ and $L$ is separable over $K$, $\nu L$ is nondiscrete by the corollary on page 287 of \cite{ZS1}, so that $\nu L$ is dense in $\RR$.



We define a cut 
in $\RR$ by extending the cut $\mbox{dist}(\Theta,K)$ in $\widetilde{\nu K}$ to a cut of $\RR$ by taking the initial segment of 
 the extended cut to be the least initial segment of $\RR$ in which  the cut $\mbox{dist}(\Theta,K)$ is confinal.
 This cut is then $\mbox{dist}(\Theta,K)\uparrow \RR$.  This cut is either $s$ or $s^{-}$ for some $s\in \RR$. If $L$ is a defect extension of $K$ then $\mbox{dist}(\Theta,K)\uparrow \RR=s^{-}$ where
$s$ is a non positive real number by \cite[Theorem 2.19]{Ku} and \cite[Corollary 2.30]{Ku}. 
We will set 
$\mbox{dist}(\omega/\nu)$ to be this real number  $s$, so that 
$$
\mbox{dist}(\Theta,K)\uparrow \RR=s^{-}=(\mbox{dist}(\omega/\nu))^{-}.
$$
The real number $\mbox{dist}(\omega/\nu)$  is well defined since it is independent of choice of Artin-Schreier generator of $L/K$ by Lemma 4.1 \cite{Ku}.

With the assumptions of this subsection, by (\ref{N22}) and (\ref{eqN23}), the distance $\delta=\mbox{dist}(\Theta,K)$ of an Artin-Schreier extension 
is $\le$ $0^{-}$ when the extension has defect. If it is a defect extension with distance equal to $0^{-}$ then it is an  independent defect extension. If it is a defect extensions and the distance is less than $0^{-}$ then the extension is a dependent defect extension. Thus if $L/K$ is a defect extension, we have that $\mbox{dist}(\omega/\nu)\le 0$ and the defect extension $L/K$ is independent if and only if $\mbox{dist}(\omega/\nu)= 0$.

\section{Extensions of two dimensional regular local rings}\label{Sec2Ext}

Suppose that $M$ is a two dimensional algebraic function field over an algebraically closed field $k$ of characteristic $p>0$ and $\mu$ is a nondiscrete rational rank 1 valuation of $M$.
Suppose that $A$ is an algebraic regular local ring of $M$ such that $\mu$ dominates $A$. A quadratic transform of $A$ is an extension $A\rightarrow A_1$ where $A_1$ is a local ring of the blowup of the maximal ideal of $A$ such that $A_1$ dominates $A$ and $A_1$ has dimension two. A quadratic transform $A\rightarrow A_1$ is said to be along the valuation $\mu$ if $\mu$ dominates $A_1$.

Let
$$
A=A_0\rightarrow A_1\rightarrow A_2\rightarrow \cdots
$$
be the sequence of quadratic transforms along $\mu$ so that the valuation ring $\mathcal O_{\mu}=\cup A_i$ (by \cite[Lemma 12]{Ab1}).  Let $P$ be a height one prime ideal in $A$   such that $A/P$ is a regular local ring. $A_i$ is said to be free if  the radical of $PA_i$ is a height one prime ideal (so that $A_i/\sqrt{PA_i}$ is a regular local ring). In particular, $A_0$ is free.

Now there exist (as explained in more detail in   \cite[Definition 7.11]{CP}) positive integers  $r_i'$ and $\overline r_i$ such that $A_0=A_{r_1'}$ and
 for all $i\ge 1$,
$r_i'\le \overline r_i<r_{i+1}'-1$ 
 such that   if $r_i'\le j\le \overline r_i$ then $A_j$ is free and if $\overline r_i<j<r_{i+1}'$ then $A_j$ is not free.

Suppose that $A_j$ is free. Then there exists $i$ such that $r_i'\le j\le\overline r_i$. Let $u,v$ be regular parameters in $A_j$ such that $u=0$ is a local equation of $Z(PA_j)$; that is, $(u)=\sqrt{PA_j}$. We will say that $u,v$ are allowable parameters in $A_i$.
Let $\overline u,\overline v$ be defined by 
$$
u=\overline u^{m}(\overline v+\alpha)^{a'}, v=\overline u^q(\overline v+\alpha)^{b}
$$
where 
$$
\frac{\nu(u)}{\nu(v)}=\frac{m}{q}
$$
with $\mbox{gcd}(m,q)=1$ and $0\ne\alpha\in k$ is such that $\nu(v)>0$. 

Then there exists $k>j$ such that $\overline u$ and $\overline v$ are regular parameters in $A_k$. Further, $A_k$ is free, and $\overline u=0$ is a local equation of the reduced exceptional divisor of $\mbox{Spec}(A_k)\rightarrow \mbox{Spec}(A)$, so that $\overline u, \overline v$ are admissible parameters in $A_k$. 
If $\mu(v)\not\in \mu(u)\ZZ$, then $m>1$ and $k=r_{i+1}'$. If $\mu(v)\in \mu(u)\ZZ$ then $m=1$ and $k\le \overline r_i$.

A regular system of parameters with $\mu(v)\not\in \mu(u)\ZZ$ can always be found from a given regular system of parameters $u,v$ by possibly replacing $v$ with the difference of $v$ and a suitable polynomial $g(u)\in k[u]$ (which necessarily has no constant term).

We will call the sequence $A_0=A_{r_0'}\rightarrow A_{r_1'}\rightarrow A_{r_2'}\rightarrow\cdots$ the sequence of standard sequences of quadratic transforms along $\mu$. Observe that this sequence depends on the  choice of $P$ in $A$.

Let $K\rightarrow L$ be a finite separable extension of two dimensional algebraic function fields over an algebraically closed field $k$. Suppose that $R$ is a two dimensional regular algebraic local ring of $K$ and $S$ is a two dimensional regular algebraic local ring of $K^*$ such that $S$ dominates $R$. Let $P$ be a height one prime ideal in $R$ such that $R/P$ is a regular local ring and let $Q$ be a height one prime ideal in $S$ such that $S/Q$ is a regular local ring. We do not insist in this definition that the good condition that $Q\cap R=P$ holds. 

Let
$R\rightarrow R_1\rightarrow \cdots\rightarrow R_r$
and 
 $S\rightarrow S_1\rightarrow \cdots \rightarrow S_s$
  be sequences of quadratic transforms such that $S_s$ dominates $R_r$. Let $E_i$ be the  divisor $Z(PR_i)$ and $F_j$ be the  divisor  $Z(QS_j)$.

\begin{Definition} \label{found1}(\cite[Definition 7.5]{CP}) Suppose that $S_s$ dominates $R_r$.
The map $R_r\rightarrow S_s$ is said to be prepared if both $R_r$ and $S_s$ are free, the critical locus of $\mbox{Spec}(S_s)\rightarrow \mbox{Spec}(R_r)$  is contained in  $F_s$  and we have an expression
$u=\gamma x^a$, where $u$ is part of a regular system of parameters of $R_r$ such that $u=0$ is a local equation of $E_r$, $x$ is part of a regular system of parameters of $S_s$ such that $x=0$ is a local equation of $F_s$ and $\gamma$ is a unit in $S_s$.
\end{Definition}


 Suppose that $R_r\rightarrow S_s$ is prepared. Let $(u,v)$ and $(x,y)$ be admissible parameters in $R_r$ and $S_s$ respectively; that is,  $u$ is part of a regular system of parameters of $R_r$ such that $u=0$ is a local equation of $E_r$, $x$ is part of a regular system of parameters of $S_s$ such that $x=0$ is a local equation of $F_s$. Further,   $u=\gamma x^a$, where $\gamma$ is a unit in $S_s$.
Then there is an expression 
\begin{equation}\label{found6}
u=\gamma x^a,
v=x^b f
\end{equation}
 where $\gamma\in S_s$ is a unit, $f\in S_s$ and $x$ does not divide $f$ in $S_s$. 
 
 \begin{Definition}\label{DefN2}  We will say that $R_r\rightarrow S_s$ is well prepared if  $f$ is not a unit in $S_s$.
 \end{Definition}

 Suppose that $R_r\rightarrow S_s$ is well prepared. 
The complexity of $R_r\rightarrow S_s$ is $ad$ where $d$ is the order of the residue of $f$ in the one dimensional regular local ring $S_s/(x)$.

 The complexity is defined in \cite[Definition 7.9]{CP}. It is shown there that the complexity depends only on the extension $R_r\rightarrow S_s$. 
 

 \begin{Proposition}\label{found7}(\cite[Proposition 7.2]{CP}) 
 Suppose that  $S_s$ dominates $R_r$, $R_r\rightarrow S_s$ is well prepared  and $R_r$ and $S_s$ have admissible parameters $(u,v)$ and $(x,y)$ satisfying the equation (\ref{found6}).   Let $S^*$ be the unique two dimensional algebraic normal local ring over $k$ lying above $R$ with $\mbox{QF}(S^*)=K^*$ and $S^*\subset S$. Then there exists a commutative diagram 
 $$
 \begin{array}{ccc}
 S^*&\rightarrow &S_s\\
 \uparrow&&\uparrow\\
 R_r&\rightarrow &R^*
 \end{array}
 $$
 where all arrow are dominating maps such that $R^*$ is a two dimensional algebraic normal local ring of $K$ such that $S_s$ lies over $R^*$. We have
 $$
 [QF(\hat S_s):QF(\hat R^*)]=ad
 $$
 where  $d$ is the order of the residue of $f$ in the one dimensional regular local ring $S_s/(x)$.
 \end{Proposition}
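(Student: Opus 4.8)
The plan is to build $R^*$ as an explicit localized, normalized blow up of $R_r$ sitting beneath $S_s$, and then to compute the degree in the completions directly from the equations $u=\gamma x^a$, $v=x^bf$ of (\ref{found6}), where $f\equiv(\text{unit})\,y^d\bmod x$. For $R^*$ I would take a localization at the center of $\nu$ of the normalization of a chart such as $R_r[v^a/u^b]$ of the blow up of $(u^b,v^a)$ in $R_r$ (iterating along the Newton polygon of $x^bf$ in the coordinate $x$ if more than one blow up is needed), chosen so that $S_s$ dominates $R^*$ and $m_{R^*}S_s$ is $m_{S_s}$-primary. Since $v^a/u^b=\gamma^{-b}f^a$ lies in $K\cap S_s$, the ring $R_r[v^a/u^b]$ is a subring of $K$ contained in $S_s$, so $R^*$ is an algebraic, hence excellent, normal local ring of $K$ with $R_r\subseteq R^*\subseteq S_s$, and $\dim R^*=2$ since it is a localization of a two dimensional birational modification of $R_r$ at a closed center. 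The requirement that $m_{R^*}S_s$ be $m_{S_s}$-primary — exactly the statement that $S_s$ lies over $R^*$ — holds because the ratios of $u$ and $v$ adjoined to $R_r$ are designed so that the corresponding ideal of $k[[x,y]]$, generated by monomials in $x$ and $f$, is primary to the maximal ideal. The required square then commutes, all four maps being inclusions of subrings of $K^*$, with $S^*$ the normal local ring lying over $R_r$ in $K^*$ that is dominated by $S_s$.

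For the degree formula, pass to completions. Then $\hat R^*\hookrightarrow\hat S_s=k[[x,y]]$ is module-finite: $\hat S_s$ is a complete Noetherian local ring and $m_{\hat R^*}\hat S_s$ is $m_{\hat S_s}$-primary, so this is Cohen's structure theorem. Both are normal local domains with residue field $k$, so $r:=[\mbox{QF}(\hat S_s):\mbox{QF}(\hat R^*)]$ is the $\hat R^*$-module rank of $\hat S_s$. By the associativity (projection) formula for Hilbert--Samuel multiplicities,
$$
r=\frac{e(m_{\hat R^*}\hat S_s;\hat S_s)}{e(m_{\hat R^*};\hat R^*)}.
$$
It then remains to read both multiplicities off the data $u=\gamma x^a$, $v=x^bf$: the ideal $m_{\hat R^*}\hat S_s$ is generated by the images of the generators of $m_{R^*}$, namely $u$, $v$ and the adjoined ratios, hence is (up to units) a monomial ideal in $x$ and $f$; evaluating its multiplicity together with that of $\hat R^*$ and dividing yields $ad$. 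This is precisely the number that the complexity of \cite[Definition 7.9]{CP} was set up to record, so $r=ad$.

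The main work is this last step: choosing the blow ups that produce an $R^*$ beneath $S_s$, and then the combinatorial bookkeeping of the two multiplicities in terms of $a$, $b$, $d$ — the model case being $u=\gamma x^2$, $v=xy$, where $R^*$ is an $A_1$ singularity, $m_{\hat R^*}\hat S_s=(x,y)^2$, and $r=4/2=2=ad$. One could instead compute $[\mbox{QF}(k[[x,y]]):\mbox{QF}(k[[u,v]])]$ by direct manipulation of power series fraction fields, but the same combinatorics reappears. It is worth noting that $\mbox{QF}(\hat S_s)$ over $\mbox{QF}(\hat R^*)$ can be much smaller than $K^*$ over $K$ — completion may collapse part of the extension, which is the defect phenomenon — so the statement genuinely concerns completions, and this is also why $R^*$ must properly enlarge $R_r$ when $b>0$.
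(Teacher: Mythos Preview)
The paper does not supply a proof of this proposition at all: it is quoted verbatim as \cite[Proposition 7.2]{CP} and no argument is given in the present paper. So there is nothing here to compare your sketch against; to see the actual proof you must go to \cite{CP}.

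That said, your outline is a reasonable strategy and is in the spirit of how such statements are proved. The construction of $R^*$ as a normalized localized blow-up of $R_r$ at a suitable monomial ideal in $u,v$, chosen so that $m_{R^*}S_s$ is $m_{S_s}$-primary, is the natural move; and once $\hat R^*\hookrightarrow\hat S_s$ is module-finite between normal complete local domains with common residue field $k$, the rank formula $e(m_{\hat R^*}\hat S_s;\hat S_s)=r\cdot e(m_{\hat R^*};\hat R^*)$ (the associativity formula for a torsion-free module of rank $r$) is exactly the right tool. Two places in your sketch remain genuinely incomplete, however. First, you do not verify that a single blow-up of $(u^b,v^a)$ (or an explicit finite iteration) actually produces an $R^*$ with $m_{R^*}S_s$ primary to $m_{S_s}$; ``iterating along the Newton polygon if needed'' hides the combinatorics, and this is where most of the work lies. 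Second, you assert but do not carry out the multiplicity bookkeeping that yields $ad$; your $A_1$ example is correct but the general case with $b>0$ and $d>1$ requires tracking both the multiplicity of the possibly singular $\hat R^*$ and the colength data of the extended ideal, and it is not obvious a priori that the quotient comes out to $ad$ rather than something involving $b$. These are not errors, but they are the substantive content of the proposition, and a reader of your write-up would have to reconstruct them.
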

 
 \begin{Remark}\label{found9} Suppose that assumptions are as in Proposition \ref{found7} with the additional assumption that $K\rightarrow L$ is Galois. Then the complexity $ad$ of $R_r\rightarrow S_s$ divides the degree $[L:K]$.
 \end{Remark}
 
 \begin{proof} Let $\overline S$ be the integral closure of $R^*$ in $L$. Let $m_1,\ldots,m_g$ be the maximal ideals of $\overline S$. The local ring $S_s$ is one of the localizations $\overline S_{m_i}$. The Galois group $G(L/K)$ acts transitively on the local rings $\overline S_{m_i}$ so these rings are all isomorphic $k$-algebras. The $m_{R^*}$-adic completion of $\overline S$ is the direct sum of the complete local rings 
 $\widehat{\overline S_{m_i}}$, and one of these local rings is $\hat S_s$.  We have that 
 $$
 [L:K]=\sum_{i=1}^g [\mbox{QF}(\widehat{\overline S_{m_i}}):\mbox{QF}(\hat R^*)]
 $$
 by \cite[Proposition 1]{Ab2}. Thus $[L:K]=g[QF(\hat S_s):QF(\hat R^*)]=gad$.
 \end{proof}

 The following remark follows from \cite[Theorem 2]{Ab1}.
 
 \begin{Remark}\label{RemarkN1}
 Suppose that $\omega$ is a rational rank 1 nondiscrete valuation of the quotient field of $K^*$ with restriction $\nu$ to $K$ such that $\omega$ dominates $S_j$ for all $j$ and $\nu$ dominates $R_i$ for all $i$.
 Then given $r_0>0$ and $s_0>0$ there exist $r\ge r_0$ and $s\ge s_0$ such that  $S_s$ dominates $R_r$ and  $R_r\rightarrow S_s$ is well prepared.  This result is true for any initial choice of $P$ in $R$ and $Q$ in $S$.
 \end{Remark}  
 
\begin{Proposition}\label{Prop2*} Let $K\rightarrow L$ be a finite separable extension of two dimensional algebraic function fields over an algebraically closed field $k$ of characteristic $p>0$, $\omega$ is a rational rank 1 nondiscrete valuation of $L$ (with residue field $k$) and $\nu$ is the restriction of $\omega$ to $K$.

 Suppose that $A$ is an algebraic  local ring of $K$ which is dominated by $\nu$. Then there exists an algebraic regular local ring $R'$ of $K$ which is dominated by $\nu$ and dominates $A$ such that if $R$ is a regular algebraic local ring of $K$ which dominates $R'$ and $S$ is a regular algebraic local ring of $L$ which is dominated by $\omega$ and dominates $R$ such that there are regular parameters $x,y$ in $S$ and $u,v$ in $R$ such that $u=\gamma x^a$ and $v=x^bf$ with $\gamma$ a unit in $S$ and  $f\in S$ such that $f$ is not a unit in $S$,   $x$ does not divide $f$ and the critical locus of $\mbox{Spec}(S)\rightarrow\mbox{Spec}(R)$ is contained in $Z(xS)$.        Then 
 letting $d=\mbox{dim}_k S/(f,x)$, we have that the complexity $ad=e(\omega/\nu)\delta(\omega/\nu)$.
 
\end{Proposition}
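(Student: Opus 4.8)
\emph{Setup.} The plan is to reduce the statement to a degree of completed quotient fields together with a count of extensions of $\nu$. Fix $R$ and $S$ as in the statement; with the height one primes $P=(u)R$ and $Q=(x)S$, the hypotheses say precisely that $R\to S$ is well prepared in the sense of \cite{CP} with admissible parameters $(u,v)$ and $(x,y)$, and its complexity is $ad$, where $d=\dim_kS/(f,x)$ is the order of the residue of $f$ in the discrete valuation ring $S/(x)$. By Proposition~\ref{found7} there is a two dimensional algebraic normal local ring $R^*$ of $K$ with $S$ lying over $R^*$ and $ad=[QF(\hat S):QF(\hat R^*)]$. Also $f(\omega/\nu)=[L\omega:K\nu]=1$, because $\omega$ has residue field $k$ and $\nu$, being a $k$-valuation, has residue field squeezed between $k$ and $L\omega=k$. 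So it suffices to produce a regular local ring $R'$ of $K$, dominated by $\nu$ and dominating $A$, such that $[QF(\hat S):QF(\hat R^*)]=e(\omega/\nu)\delta(\omega/\nu)$ for every $R\supseteq R'$ and every such $S$.

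\emph{The Galois case.} Assume first that $L/K$ is Galois. Let $\overline S$ be the integral closure of $R^*$ in $L$; it is normal, semilocal, module finite over $R^*$ by excellence, and $\mathrm{Gal}(L/K)$ acts transitively on its maximal ideals $\mathfrak m_1,\dots,\mathfrak m_{g'}$. Its $m_{R^*}$-adic completion is $\widehat{\overline S}=\prod_{i=1}^{g'}\widehat{\overline S_{\mathfrak m_i}}$, each factor a complete normal local domain, and (from the construction of $R^*$) $\hat S=\widehat{\overline S_{\mathfrak m_{i_0}}}$, where $\mathfrak m_{i_0}$ is the center of $\omega$ on $\overline S$. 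As in the proof of Remark~\ref{found9}, $[L:K]=\sum_i[QF(\widehat{\overline S_{\mathfrak m_i}}):QF(\hat R^*)]$, and by transitivity every summand equals $ad$, so $[L:K]=g'\cdot ad$. On the other hand $[L:K]=g\cdot e(\omega/\nu)f(\omega/\nu)\delta(\omega/\nu)$ by (\ref{found8}), where $g$ is the number of extensions of $\nu$ to $L$. Since $f(\omega/\nu)=1$, the Galois case reduces to proving $g'=g$.

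\emph{Separation of centers.} Sending an extension of $\nu$ to $L$ to its center on $\overline S$ defines a map to $\{\mathfrak m_1,\dots,\mathfrak m_{g'}\}$ that is surjective: given $\mathfrak m_i$, since $\nu$ dominates $R^*$ and $\overline S_{\mathfrak m_i}$ lies over $R^*$, the extension theorem for valuations yields an extension of $\nu$ to $L$ centered at $\mathfrak m_i$. Hence $g'\le g$ always, and what remains is injectivity. This is the role of the hypothesis $R\supseteq R'$: there are only finitely many extensions of $\nu$ to $L$, any two distinct ones disagree on some element of $L$, and after blowing up $R$ — hence $R^*$, and hence $\overline S$ via the finite map of integral closures — sufficiently far along $\nu$, these disagreements are recorded by distinct centers on $\overline S$. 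Taking $R'$ past the stage where this separation has occurred gives $g'=g$, hence $ad=e(\omega/\nu)f(\omega/\nu)\delta(\omega/\nu)=e(\omega/\nu)\delta(\omega/\nu)$ for $L/K$ Galois; in particular the complexity has then stabilized to $e(\omega/\nu)\delta(\omega/\nu)$.

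\emph{General case and main obstacle.} For a general finite separable $L/K$, let $N$ be the Galois closure; then $N/K$ and $N/L$ are both Galois. Enlarging $R'$, fix an extension $\omega_N$ of $\omega$ to $N$ and, after enough blowing up, a regular local ring $T$ of $N$ dominated by $\omega_N$ and dominating $S$, so that $R\to S\to T$ is a tower of well prepared maps; applying Proposition~\ref{found7} to $R\to T$, to $R\to S$ and to $S\to T$ with compatibly chosen auxiliary normal rings makes the corresponding degrees of completed quotient fields multiply, so the complexity is multiplicative along the tower. Since the reduced ramification index and the defect are multiplicative in towers of valued field extensions, dividing the (already proved) Galois identity for $K\to N$ by that for $L\to N$ gives $ad=e(\omega/\nu)\delta(\omega/\nu)$, which is the assertion. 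The step I expect to be the crux is the separation of centers: one must quantify how far along the quadratic sequence one has to go to distinguish the finitely many extensions of $\nu$ to $L$ (and to $N$) on the relevant integral closures, and confirm that this is compatible with the passage from $R$ to $R^*$ in Proposition~\ref{found7}. The tower multiplicativity of the last step — in particular the injectivity of $\hat S\hookrightarrow\hat T$ on quotient fields — is a secondary technical point, which also needs the local rings to have been blown up enough.
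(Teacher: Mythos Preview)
The paper does not supply its own proof of this proposition; it simply cites \cite[Section 7.9]{CP} and \cite[Proposition 3.4]{C1}. So there is no in-paper argument to compare against, and your reconstruction has to be judged on its own.

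Your strategy is sound and almost certainly close to what is in the cited references: reduce via Proposition~\ref{found7} to computing $[QF(\hat S):QF(\hat R^*)]$, handle the Galois case by matching the count $g'$ of maximal ideals of the integral closure with the number $g$ of extensions of $\nu$, and then pass to the Galois closure. The separation-of-centers step is correctly identified as the heart of the matter, and your sketch can be made precise: the integral closure $T$ of $\mathcal O_\nu$ in $L$ equals $\bigcap_j\mathcal O_{\omega_j}$ and has exactly $g$ maximal ideals $n_j=m_{\omega_j}\cap T$; choose elements $h_j\in n_j\setminus\bigcup_{k\ne j}n_k$; since $T=\bigcup_i T_i$ with $T_i$ the integral closure of $R_i$ in $L$, all the $h_j$ lie in some $T_{i_0}$, and taking $R'=R_{i_0}$ forces the centers to be distinct on the integral closure of any ring dominating $R'$, in particular on $\overline S$ over $R^*$. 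This closes the Galois case.

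The one place where your outline is genuinely thin is the tower step for general $L/K$. Saying that the complexities multiply because the auxiliary normal rings can be ``compatibly chosen'' is the assertion, not the proof: you need that for a well prepared tower $R\to S\to T$ one can arrange $R^*_{R\to T}=R^*_{R\to S}$ and that $\hat S\to\hat T$ is injective with $[QF(\hat T):QF(\hat S)]$ equal to the complexity of $S\to T$. This is true after enough blowing up, but it requires an argument (essentially that once $S$ is the unique localization of the integral closure of $R^*$ dominated by $\omega$, and likewise $T$ over $S^*$, the three completed quotient-field degrees form a multiplicative tower). In \cite{CP} this is handled by the machinery of Section~7; an alternative is to bypass the tower entirely and argue directly that once the centers are separated, $[QF(\hat S):QF(\hat R^*)]=e(\omega/\nu)f(\omega/\nu)\delta(\omega/\nu)$ using the definition of defect in \cite[Section 7.1]{CP}, which already builds in the passage to a Galois closure. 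Either way this last step needs a few more lines than you have given it.
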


This is proved in \cite[Section 7.9]{CP}  and \cite[Proposition 3.4]{C1}.

Let $K\rightarrow L$ be a finite separable extension of two dimensional algebraic function fields over an algebraically closed field $k$ of characteristic $p>0$, $\omega$ is a rational rank 1 nondiscrete valuation of $L$ (with residue field $k$) and $\nu$ is the restriction of $\omega$ to $K$. Let $R$ be a regular algebraic local ring of $K$  and $S$ be a regular algebraic local ring of $L$ which is dominated by $\omega$ and dominates $R$.  Let $P$ be a height one prime ideal in $R$ such that $R/P$ is a regular local ring and let $Q$ be a height one prime ideal in $S$ such that $S/Q$ is a regular local ring.
Let
\begin{equation}\label{found10}
R\rightarrow R_1\rightarrow \cdots\rightarrow \cdots
 \mbox{ and }
 S\rightarrow S_1\rightarrow \cdots \rightarrow \cdots
 \end{equation}
 be the infinite sequences of quadratic transforms along $\nu$ and $\omega$ respectively.
 By Remark \ref{RemarkN1} and Proposition \ref{Prop2*}, there exits a positive integer $r_0$ such that whenever $r\ge r_0$ and $R_r\rightarrow S_s$ is well prepared, we have that the complexity $ad$ of this extension is equal to $e(\omega/\nu)\delta(\omega/\nu)$. We will call this the stable complexity of the sequences (\ref{found10}).

Suppose that $K\rightarrow L$ is a finite extension of two dimensional algebraic function fields,
 $R$ is an algebraic  regular local ring of $K$ which is dominated by a regular algebraic local ring $S$ of $L$ such that $\dim R=\dim S=2$. Let $x,y$ be regular parameters in $S$ and $u,v$ be regular parameters in $R$. Then we can form the Jacobian ideal 
$$
J(S/R)=
(\frac{\partial u}{\partial x}\frac{\partial v}{\partial y}-\frac{\partial u}{\partial y}\frac{\partial v}{\partial x}).
$$
This ideal is independent of choice of regular parameters.

The following proposition is established in \cite{CP}.

\begin{Proposition}\label{Prop10*} 
Suppose that $A$ is an algebraic  local ring of $K$ and $B$ is an algebraic local ring of $L$ which is dominated by a rational rank 1 nondiscrete valuation $\omega$ of $L$
such that $B$ dominates $A$. Then there exists a commutative diagram of homomorphisms
$$
\begin{array}{ccc}
R&\rightarrow &S\\
\uparrow&&\uparrow \\
A&\rightarrow &B
\end{array}
$$
such that $R$ is a regular algebraic local ring of $K$ with regular parameters $u,v$, $S$ is a regular algebraic local ring of $L$ with regular parameters $x,y$ such that $S$ is dominated by $\omega$, $S$ dominates $R$, $J(S/R)= (x^{\overline c})$ for some non negative integer $\overline c$ and there is an expression
$$
u=\gamma x^a, v=x^b(y^n\tau+x\Omega)
$$
where $\tau,\gamma$ are units in $S$, $\Omega\in S$ and $n>0$, $0\le b<a$.
Thus the quadratic transform of $R$ along $\nu$ is not dominated by $S$.
\end{Proposition}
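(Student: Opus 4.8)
The plan is to resolve $A$ and $B$ to regular local rings, run the standard quadratic transform sequences of Section~\ref{Sec2Ext} along $\nu$ and $\omega$ until the induced map is well prepared, read off the stated form, and then deduce the non-domination statement by a factoriality argument in $S$.

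By resolution of singularities of algebraic surfaces, which holds in every characteristic, first choose a regular algebraic local ring $R_0$ of $K$ which dominates $A$ and is dominated by $\nu$ (on a smooth projective model of $K$ dominating $A$ the center of $\nu$ is a closed point, since $\nu$, being nondiscrete, is not divisorial). Then $R_0\subseteq\mathcal O_\nu\subseteq\mathcal O_\omega$ and $B\subseteq\mathcal O_\omega$, so the subring of $L$ generated by $R_0$ and $B$, localized at its intersection with $m_{\mathcal O_\omega}$, is an algebraic local ring of $L$ dominated by $\omega$ and dominating $R_0$ and $B$; resolving it yields such a ring $S_0$ which is moreover regular. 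Choose height one primes $P\subset R_0$ and $Q\subset S_0$ with regular quotients, and form the infinite sequences of quadratic transforms $R_0\to R_1\to\cdots$ along $\nu$ and $S_0\to S_1\to\cdots$ along $\omega$.

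By Remark~\ref{RemarkN1} there are indices $r,s$ with $S_s$ dominating $R_r$ and $R_r\to S_s$ well prepared, so by Definition~\ref{DefN2} and~(\ref{found6}) there are admissible parameters $(u,v)$ in $R_r$ and $(x,y)$ in $S_s$ with $u=\gamma x^a$, $v=x^bf$, $a\ge 1$, $\gamma$ a unit in $S_s$, $x\nmid f$, and $f$ a non-unit of $S_s$. Since $R_r\to S_s$ is prepared, its critical locus is contained in $F_s=Z(QS_s)=Z(xS_s)$; this critical locus is the zero locus of the nonzero Jacobian ideal $J(S_s/R_r)$ (the morphism is generically \'etale, as $L/K$ is separable), and $S_s$ is a regular local ring, hence a unique factorization domain in which $x$ is prime, so $J(S_s/R_r)=(x^{\overline c})$ for some $\overline c\ge0$. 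Because $x\nmid f$ and $f\in m_{S_s}$, the residue of $f$ in the one dimensional regular local ring $S_s/(x)$ is $\overline y^{\,n}\overline\tau$ for some $n>0$ and some unit $\overline\tau$; lifting gives $f=y^n\tau+x\Omega$ with $\tau$ a unit in $S_s$, $\Omega\in S_s$, and $n=\dim_k S_s/(f,x)>0$. Finally, if $b\ge a$ then $\nu(u)=a\omega(x)<b\omega(x)+\omega(f)=\nu(v)$ and $\omega(v/u)>0$, so the quadratic transform $R_{r+1}$ of $R_r$ along $\nu$ has regular parameters $u$ and $v/u=\gamma^{-1}x^{\,b-a}f$, is still dominated by $S_s$, and $R_{r+1}\to S_s$ is still well prepared — a short check, the main point being $J(S_s/R_{r+1})=J(S_s/R_r)\bigl(J(R_{r+1}/R_r)S_s\bigr)^{-1}=(x^{\overline c-a})$, again a power of $x$, so the critical locus stays inside $F_s$ — with $b$ decreased by $a$ and $f$ changed only by the unit $\gamma^{-1}$; iterating $\lfloor b/a\rfloor$ times gives $0\le b<a$. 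Renaming the resulting rings $R$ and $S$, and noting that all four maps in the square are inclusions of subrings of $L$ and hence commute, produces the required diagram.

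It remains to prove the last assertion. Let $R'$ be the quadratic transform of $R$ along $\nu$, obtained by blowing up $m_R=(u,v)$; since $\nu(u)=a\omega(x)$ and $\nu(v)=b\omega(x)+\omega(f)$, we have $v/u\in R'$ when $\nu(u)\le\nu(v)$ and $u/v\in R'$ when $\nu(u)>\nu(v)$. If $v/u=\gamma^{-1}x^{\,b-a}f$ lay in the unique factorization domain $S$, then, since $b-a<0$, $x^{\,a-b}$ would divide $f$ in $S$, contradicting $x\nmid f$. If $u/v=\gamma x^{\,a-b}f^{-1}$ lay in $S$, then $f$ would divide $x^{\,a-b}$ in $S$, hence $f$ would be a unit times a power of $x$, hence a unit since $x\nmid f$, contradicting that $f$ is a non-unit. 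In either case $R'\not\subseteq S$, so $S$ does not dominate $R'$. The substantive input here is Remark~\ref{RemarkN1} — Abhyankar's theorem that, after enough blowing up, a well prepared configuration is reached; granting that, the only step needing care is that the reduction to $0\le b<a$ preserves preparedness, and this is settled by the Jacobian identity indicated.
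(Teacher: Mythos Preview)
Your proof is correct and follows essentially the same route as the paper's. Both arguments first resolve $A\to B$ to an extension of regular local rings, then blow up along $\nu$ and $\omega$ until the map is well prepared with $u=\gamma x^a$, $v=x^bf$, and finally perform $\lfloor b/a\rfloor$ quadratic transforms of the base to force $0\le b<a$.

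The one genuine methodological difference is in how the prepared form and the condition $J(S/R)=(x^{\overline c})$ are obtained. The paper does not invoke Remark~\ref{RemarkN1}; instead it builds the configuration by hand: it first blows up $R_0\to R_1$ so that $J(R_1/R_0)=(u^g)$, then blows up $S_0\to S$ so that $J(S/R_0)=(x^f)$ is supported on the exceptional divisor, and deduces $u=\gamma x^l$ from the Jacobian chain $(x^f)=u^gJ(S/R_1)$. You instead cite Remark~\ref{RemarkN1} to land directly in a well prepared situation and then read off $J(S_s/R_r)=(x^{\overline c})$ from the critical-locus clause of Definition~\ref{found1} together with unique factorization in $S_s$. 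Your route is shorter and makes cleaner use of the machinery already set up in Section~\ref{Sec2Ext}; the paper's route is more self-contained and shows explicitly why the Jacobian is a monomial in $x$. You also supply a careful factoriality argument for the final non-domination assertion, which the paper leaves to the reader; your two-case analysis (showing that neither $v/u$ nor $u/v$ can lie in $S$ when $0\le b<a$) is exactly what is needed.
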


\begin{proof} By two dimensional local uniformization (or resolution of singularities), \cite{Ab2}, \cite{Li} or \cite{CJS}, there exists a commutative diagram
$$
\begin{array}{ccc}
R_0&\rightarrow &S_0\\
\uparrow&&\uparrow \\
A&\rightarrow &B
\end{array}
$$
such that $S_0$ is an algebraic local ring of $L$ which dominates $B$, $R_0$ is an algebraic regular local ring of $K$ which dominates $A$, $\omega$ dominates $S_0$ and $S_0$ dominates $R_0$.  Since $\nu$ has rational rank 1 and is nondiscrete, there exists a nontrivial sequence of quadratic transforms $R_0\rightarrow R_1$ along $\nu$ such that $R_1$ is free. Let $E$ be the last exceptional divisor of the sequence of quadratic transforms factoring $R_0\rightarrow R_1$ and let $u,v$ be regular parameters in $R_1$ such that $u=0$ is a local equation of $E$ in $\mbox{Spec}(R_1)$. We have that $J(R_1/R_0)=(u^g)$ for some positive integer $g$. Let $S_0\rightarrow S$ be a sequence of quadratic transforms along $\omega$ such that $S$ is free and $S$ dominates $R_1$, and the support of $J(S_0/R_0)S$ is the last exceptional divisor $F$ of the sequence of quadratic transforms factoring     $S_0\rightarrow S$. Let $x,y$ be regular parameters in $S$ such that $x=0$ is a local equation of $F$ in $\mbox{Spec}(S)$. Thus $J(S/R_0)=J(S_0/R_0)J(S/S_0)=(x^f)$ for some positive integer $f$. Now 
$(x^f)=J(S/R_0)=J( R_1/R_0)J(S/R_1)=u^gJ(S/R_1)$ implies $u=\gamma x^l$ for some unit $\gamma$ in $S$ and positive integer $l$. Thus there exist $b,n\in \NN$, a unit $\tau$ in $S$ and $\Omega\in S$ such that $v=x^b(\tau y^n+x\Omega)$. There exists a sequence of quadratic transforms $R_1\rightarrow R_2$ along $\nu$ such that $S$ dominates $R_2$, and after replacing $R_1$ with $R_2$ we have that  $R_1\rightarrow S$ is well prepared ($n>0$). If $\lfloor \frac{b}{l}\rfloor=0$ then the quadratic transform of $R_1$ along $\nu$ is not dominated by $S$ and we set $R=R_1$. If $\lfloor \frac{b}{l}\rfloor>0$, then set $e=\lfloor \frac{b}{l}\rfloor$ , and let $R_1\rightarrow R$ be the sequence of $e$ quadratic transforms along $\nu$. Then $R\rightarrow S$ satisfies the conclusions of the proposition.
\end{proof}

\begin{Remark}\label{RemarkAS} 
Let $K\rightarrow L$ be an Artin-Schreier    extension of two dimensional algebraic function fields over an algebraically closed field $k$ of characteristic $p>0$.  Let  $\omega$ be a rational rank 1 nondiscrete valuation of $L$ (with residue field $k$) and $\nu$ be the restriction of $\omega$ to $K$.
Since $L$ is Galois over $K$, we have that $g(\omega/\nu)e(\omega/\nu)\delta(\omega/\nu)=p$ where $g=g(\omega/\nu)$ is the number of extensions of $\nu$ to $L$. So  we either have that $g=1$ or $g=p$.
If $g=1$, then $\omega$ is the unique extension of $\nu$ to $L$ and either $e(\omega/\nu)=p$ and $\delta(\omega/\nu)=1$ or $e(\omega/\nu)=1$ and $\delta(\omega/\nu)=p$. If $g=1$, we have by Proposition \ref{Prop2*} that the stable complexity of the sequences (\ref{found10}) is $ad=p$.
If $g=p$, then $e(\omega/\nu)=1$ and $\delta(\omega/\nu)=1$ and the stable complexity of the sequences (\ref{found10})is $ad=1$. 
\end{Remark}

The following proposition is proven in  \cite{Pi}.

\begin{Proposition}\label{Prop1*} 
Suppose that $K\rightarrow L$ is an Artin-Schreier extension of two dimensional algebraic function fields over an algebraically closed field $k$ of characteristic $p>0$, $\omega$ is a rational rank 1 nondiscrete valuation of $L$ with restriction $\nu=\omega|K$. Further suppose that $A$ is an algebraic  local ring of $K$ and $B$ is an algebraic local ring of $L$ which is dominated by $\omega$ such that $B$ dominates $A$. Then there exists a commutative diagram of homomorphisms
$$
\begin{array}{ccc}
R&\rightarrow &S\\
\uparrow&&\uparrow \\
A&\rightarrow &B
\end{array}
$$
such that $R$ is a regular algebraic local ring of $K$ with regular parameters $u,v$, $S$ is a regular algebraic local ring of $L$ with regular parameters $x,y$ such that $S$ is dominated by $\omega$, $S$ dominates $R$, $R\rightarrow S$ is well prepared with admissible parameters $u,v$ in $R$ and $x,y$ in $S$ (with respect to the prime ideals $P=uR$ in $R$ and $Q=xS$ in $S$). We further have  that $R\rightarrow S$ is quasi finite,
$J(S/R)= (x^{\overline c})$ for some non negative integer $\overline c$ and one of the following three cases holds:c
\begin{enumerate}
\item[0)]  $u=x$, $v=y$ ($R\rightarrow S$ is unramified).
\item[1)]  $u=x$, $v=y^p\gamma+x\Sigma$ where $\gamma$ is a unit in $S$ and $\Sigma\in S$.
\item[2)] $u=\gamma x^p$, $v=y$ where $\gamma$ is a  unit in $S$.
\end{enumerate}

\end{Proposition}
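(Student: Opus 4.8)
The plan is to produce the required diagram from the monomialization results already recorded in Section~\ref{Sec2Ext}, to arrange that the stable complexity of the sequences~(\ref{found10}) is realized, and then to normalize the regular parameters.

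First I would apply Proposition~\ref{Prop10*} to $A\to B$. This produces a commutative square as in the statement with $R\to S$ well prepared, admissible parameters $u,v$ in $R$ and $x,y$ in $S$ taken with respect to $P=uR$ and $Q=xS$, with $J(S/R)=(x^{\overline{c}})$, and with
$$
u=\gamma x^{a},\qquad v=x^{b}\bigl(y^{n}\tau+x\Omega\bigr),
$$
where $\gamma,\tau$ are units in $S$, $\Omega\in S$, $n>0$ and $0\le b<a$. Next, passing to members $R_{r},S_{s}$ of~(\ref{found10}) with $r$ large and reapplying Proposition~\ref{Prop10*}, I would arrange in addition that $R$ dominates the regular local ring $R'$ of Proposition~\ref{Prop2*}. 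Since $f:=y^{n}\tau+x\Omega$ is a nonunit not divisible by $x$ and the critical locus of $\mathrm{Spec}(S)\to\mathrm{Spec}(R)$ is $V(x^{\overline{c}})\subseteq Z(xS)$, Proposition~\ref{Prop2*} then gives that the complexity of $R\to S$, which is $ad$ with $d=\dim_{k}S/(f,x)=n$, equals $e(\omega/\nu)\delta(\omega/\nu)$. Because $L/K$ is Artin--Schreier, $g(\omega/\nu)e(\omega/\nu)\delta(\omega/\nu)=p$ with $g(\omega/\nu)\in\{1,p\}$ by Remark~\ref{RemarkAS}, so $an=e(\omega/\nu)\delta(\omega/\nu)\in\{1,p\}$.

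Suppose for the moment that $b=0$. Then there are three possibilities. If $an=1$, then $a=n=1$, so $u=\gamma x$ and $v=y\tau+x\Omega$; after rescaling $x$ (replacing it by $\gamma x$) so that $u=x$, and then replacing $y$ by $y\tau+\gamma^{-1}x\Omega$ so that $v=y$, one is in case~0). If $an=p$ with $(a,n)=(1,p)$, then $u=\gamma x$ and $v=y^{p}\tau+x\Omega$; after rescaling $x$ so that $u=x$ one has the form of case~1). If $an=p$ with $(a,n)=(p,1)$, then $u=\gamma x^{p}$ and $v=y\tau+x\Omega$; after replacing $y$ by $y\tau+x\Omega$ so that $v=y$ one is in case~2). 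In each of these three situations $m_{R}S=(x^{a},v)S$ is $m_{S}$-primary, so $R\to S$ is quasi finite; the critical locus of $\mathrm{Spec}(S)\to\mathrm{Spec}(R)$ is still $V(x^{\overline{c}})\subseteq Z(xS)$; and $u,v$ and $x,y$ are admissible with respect to $P=uR$, $Q=xS$. So all of the asserted conclusions hold.

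It remains to reduce to the case $b=0$, and this is the step I expect to be the main obstacle. Since $0\le b<a$ and $a\in\{1,p\}$, the only possibility with $b>0$ is $a=p$, $n=1$ and $1\le b<p$. Here $u=\gamma x^{p}$, $v=x^{b}(y\tau+x\Omega)$, and the exceptional curve $Z(xS)=F_{s}$ is contracted by $\mathrm{Spec}(S)\to\mathrm{Spec}(R)$ onto the closed point of $R$, so $R\to S$ is not quasi finite; moreover, since $1\le b<p$, neither $u/v$ nor $v/u$ lies in $S$, so no single quadratic transform of $R$ repairs this and one must blow up above both $R$ and $S$ in a coordinated way. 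I would carry this out by performing further quadratic transforms of $R$ along $\nu$, together with the matching quadratic transforms of $S$ along $\omega$ furnished by the standard quadratic transform sequence machinery of Section~\ref{Sec2Ext} (via Remark~\ref{RemarkN1}), and argue that after finitely many such steps one arrives at a well prepared extension $R\to S$ of the shape $u=\gamma x^{a}$, $v=x^{b}(y^{n}\tau+x\Omega)$ with $an=p$ and $b=0$. By the previous paragraph this extension is then in the form of case~1) or of case~2), which completes the argument.
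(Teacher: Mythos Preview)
Your outline follows the same route as the paper: invoke Proposition~\ref{Prop10*}, use Remark~\ref{found9}/Proposition~\ref{Prop2*} to force $an\in\{1,p\}$, and then normalize parameters when $b=0$. That part is fine, and your treatment of the three cases with $b=0$ (including the quasi-finiteness check via $m_RS$ being $m_S$-primary) is correct.

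The gap is exactly where you flag it: the reduction from $b>0$ to $b=0$. You propose to ``perform further quadratic transforms \dots\ and argue that after finitely many such steps one arrives at \dots\ $b=0$,'' but you give no invariant that decreases and no mechanism forcing $b$ to drop. A priori, iterated blowups could keep returning forms with $0<b<p$, and Remark~\ref{RemarkN1} only guarantees well-preparedness, not $b=0$. So as written this step is an assertion, not an argument.

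The paper does not argue by induction or by driving $b$ down. Instead it performs a \emph{single} standard sequence of quadratic transforms on each side and computes explicitly. Starting from $u=\delta x^p$, $v=x^by$ with $0<b<p$, one blows up $S\to S'$ via
\[
x=\tilde x_1^{\overline a}(\tilde y_1+\alpha)^{\overline a'},\qquad y=\tilde x_1^{\overline b}(\tilde y_1+\alpha)^{\overline b'}
\]
with $\overline a\,\overline b'-\overline a'\,\overline b=1$, obtaining $u=\delta\tilde x_1^{d_1}(\tilde y_1+\alpha)^{e_1}$, $\overline v=\tilde x_1^{f_1}(\tilde y_1+\alpha)^{g_1}$ with $d_1g_1-e_1f_1=p$. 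Setting $l=\gcd(d_1,f_1)\in\{1,p\}$, one then constructs matching parameters $\tilde u,\tilde v$ in a blowup $R\to R'$ so that $lg_2=p$; if $l=p$ one lands directly in case~2), and if $l=1$ one lands in case~1). This single explicit step replaces your open-ended iteration and is what actually closes the argument.
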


\begin{proof} By Proposition \ref{Prop10*} and Remark \ref{found9}, we  may construct a diagram
$$
\begin{array}{ccc}
\overline R&\rightarrow & \overline S\\
\uparrow&&\uparrow\\
A&\rightarrow&B
\end{array}
$$
such that all the conclusions of the proposition hold, except possibly $\overline R\rightarrow \overline S$ is not quasi finite, and we have a form 
\begin{equation}\label{eq22*}
u=\delta x^p, v = x^by
\end{equation}
where $b$ is an integer with $0<b<p$ and $\delta$ is a unit in $\overline S$. We will show that after one more sequence of blowups, we obtain a map of the form of 0), 1) or 2).

There exists a sequence of quadratic transforms  $S\rightarrow S'$ of regular local rings along $\omega$ such that $S'$ has regular parameters $\tilde x_1,\tilde y_1$ defined by 
\begin{equation}\label{eq100}
x=\tilde x_1^{\overline a}(\tilde y_1+\alpha)^{\overline a'},
y=\tilde x_1^{\overline b}(\tilde y_1+\alpha)^{\overline b'}
\end{equation}
where $0\ne \alpha\in k$ and $\overline a\overline b'-\overline a'\overline b=1$. Then we have an expression
$$
u=\delta \tilde x_1^{d_1}(\tilde y_1+\alpha)^{e_1},
v=\tilde x_1^{f_1}(\tilde y_1+\alpha)^{g_1}
$$
where $d_1g_1-e_1f_1=p$. Let $l=\mbox{gcd}(d_1,f_1)$. The number $l$  must either be 1 or $p$. We have that
$$
\delta=\delta_0+\tilde x_1 \Omega\mbox{ for some $0\ne\delta_0\in k$ and $\Omega\in S'$.}
$$
We have a  sequence of quadratic transforms  $R\rightarrow R'$ of regular local rings such that 
$S'$ dominates $R'$ and $R'$ has regular parameters $\tilde u$ and $\tilde v$ such that
$$
\tilde u=\tilde x_1^l(\tilde y_1+\alpha)^{e_2}\delta^{f_2},
\tilde v=(\tilde y_1+\alpha)^{g_2}\delta^{h_2}-\alpha^{g_2}\delta_0^{h_2}
$$
We have that $lg_2=p$ as
$$
p=\left|
\begin{array}{cc} 
d_1&e_1\\
f_1&g_1
\end{array}\right|
=\left|\begin{array}{cc}
l&e_2\\
0&g_2
\end{array}\right|.
$$
First suppose that $l=p$. Then $g_2=1$ and we have an expression of the form of 2). 
Now suppose that $l\ne p$. Then $l=1$ and  we have an expression of the form of  1).
\end{proof}

\section{Some calculations in two dimensional Artin-Schreier extensions}\label{SecCalc}

Let $K\rightarrow L$ be an Artin-Schreier extension of two dimensional algebraic function fields over an algebraically closed field $k$ of characteristic $p>0$. Let $R\rightarrow S$ be an extension from a regular algebraic local ring of $K$ to a regular algebraic local ring of $L$ such that $S$ dominates $R$. 

Let $u, v$ be regular parameters in $R$ and $x,y$ be regular parameters in $S$. 
We will say that $R\rightarrow S$ is of type 0 with respect to these parameters if 
$$
\mbox{Type 0:}\,\,\,\,\, u=\gamma x, v=y\tau +x\Omega
$$
where $\gamma,\tau$ are units in $S$ and $\Omega\in S$, so that $R\rightarrow S$ is unramified.
We will say that $R\rightarrow S$ is of type 1 with respect to these parameters if 
$$
\mbox{Type 1:}\,\,\,\,\, u=\gamma x, v=y^p\tau +x\Omega
$$
where $\gamma,\tau$ are units in $S$ and $\Omega\in S$.
We will say that $R\rightarrow S$ is of type 2 with respect to these parameters if 
$$
\mbox{Type 2:}\,\,\,\,\, u=\gamma x^p, v=y\tau +x\Omega
$$
where $\gamma,\tau$ are units in $S$ and $\Omega\in S$.

These definitions are such that if one these types hold, and $\overline u,\overline v$ are regular parameters in $R$, $\overline x,\overline y$ are regular parameters in $S$  
such that $\overline u$ is a unit in $R$ times $u$ and $\overline x$ is a unit in $S$ times $x$ then $R\rightarrow S$ is of the same type for the new parameters $\overline u,\overline v$ and $\overline x,\overline y$.

If  we replace $R\rightarrow S$ with a well prepared map $R_i\rightarrow S_j$ in the sequences (\ref{found10}), we will insist that the above parameters be admissible. We see that the three types are preserved by allowable changes of variables. In particular, we may obtain the respective forms of Proposition \ref{Prop1*} by an allowable change of variables.


\begin{Theorem}\label{TheoremA} Suppose that $R\rightarrow S$  is of type 1 with respect to regular parameters $x, y$ in $S$ and $u,v$ in $R$ and that $J(S/R)=(x^{\overline c})$. Let $\overline x=u$, $\overline y=y-g(\overline x)$ 
where $g(\overline x)\in k[\overline x]$ is a polynomial with zero constant term, 
so that $\overline x,\overline y$ are regular parameters in $S$. Computing the Jacobian determinate $J(S/R)$, we see that 
$$
u=\overline x, v=\overline y^p\gamma+\overline x^{\overline c}\overline y\tau+f(\overline x)
$$
 where $\gamma,\tau$ are unit series in $\hat S$ and $f(\overline x)=\sum e_i\overline x^i\in k[[\overline x]]$.
Make the change of variables $\overline v=v-\sum e_iu^i$ where the sum is over $i$ such that $i\le \frac{pq}{m}$ so that $u,\overline v$ are regular parameters in $R$.

Suppose that $m,q$ are positive integers with $m>1$ and $\mbox{gcd}(m,q)=1$. Let $\alpha$ be a nonzero element of $k$.
Consider the sequence of quadratic transforms $S\rightarrow S_1$ so that $S_1$ has regular parameters $x_1,y_1$ defined by
$$
\overline x=x_1^{m}(y_1+\alpha)^{a'}, \overline y=x_1^{q}(y_1+\alpha)^{b'}
$$
where  $a',b'\in \NN$ are such that $mb'-q a'=1$. 

 We have that $R\rightarrow S$ is of type 1 with respect to the regular parameters $\overline x,\overline y$ and $u,v$.
Let $\sigma=\mbox{gcd}(m,p q)$ which is 1 or $p$. 

There exists a unique   sequence of quadratic transforms $R\rightarrow R_1$ such that $R_1$ has regular parameters $u_1,v_1$ defined by 
$$
u=u_1^{\overline m}(v_1+\beta)^{c'}, \overline v=u_1^{\overline q}(v_1+\beta)^{d'}
$$
with $0\ne\beta\in k$
giving
  a commutative diagram of homomorphisms
$$
\begin{array}{lll}
R_1&\rightarrow &S_1\\
\uparrow&&\uparrow\\
R&\rightarrow &S
\end{array}
$$
such that $R_1\rightarrow S_1$ is quasi finite. 
We  have that  $J(S_1/R_1)=(x_1^{c_1})$ for some positive integer $c_1$ and $R_1\rightarrow S_1$ is quasi finite. 
Further:

\begin{enumerate}
 \item[0)] If $\frac{q}{m}\ge  \frac{\overline c}{p-1}$  then $R_1\rightarrow S_1$ is of type 0.
\item[1)] If $\frac{q}{m}< \frac{\overline c}{p-1}$ and $\sigma=1$ then $R_1\rightarrow S_1$ is of type 1 and
$$
\left(\frac{c_{1}}{p-1}\right)=\left(\frac{\overline c}{p-1}\right)m-q.
$$  
\item[2)] If $\frac{q}{m}< \frac{\overline c}{p-1}$ and $\sigma=p$ then $R_1\rightarrow S_1$ is of type 2 and 
$$
\left(\frac{c_{1}}{p-1}\right)=\left(\frac{\overline c}{p-1}\right)m-q+1.
$$
\end{enumerate}
In cases 1) and 2), $m=\sigma \overline m$, $pq=\sigma\overline q$ and  $\overline mc'-\overline q d'=1$. 
\end{Theorem}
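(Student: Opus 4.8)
The plan is to do an explicit local coordinate computation, tracking how the defining equations transform under the quadratic transform sequences $R\to R_1$ and $S\to S_1$. First I would start from the normalized type 1 form given in the statement, namely
$$
u=\overline x,\qquad v=\overline y^{\,p}\gamma+\overline x^{\,\overline c}\overline y\tau+f(\overline x),
$$
after having absorbed the low-order terms of $f$ into the change of variable $\overline v=v-\sum e_iu^i$. Substituting the monomial substitutions $\overline x=x_1^m(y_1+\alpha)^{a'}$, $\overline y=x_1^q(y_1+\alpha)^{b'}$ into these expressions, I would compute the $x_1$-order of each of the three terms $\overline y^{\,p}\gamma$, $\overline x^{\,\overline c}\overline y\tau$, $f(\overline x)$. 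The exponent of $x_1$ in $\overline y^{\,p}\gamma$ is $pq$; in $\overline x^{\,\overline c}\overline y\tau$ it is $\overline c\,m+q$; and the minimal exponent in $f(\overline x)$ coming from a surviving term $e_i\overline x^i$ is $im$ for the smallest $i$ with $i>\tfrac{pq}{m}$ (the smaller ones having been cleared). The hypothesis $\tfrac{q}{m}\gtrless\tfrac{\overline c}{p-1}$ is exactly the comparison $pq\gtrless \overline c m+q$, i.e.\ whether the $\overline y^{\,p}$ term or the cross term has smaller order; this dichotomy drives the three cases.

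Next I would extract the leading behavior of $v$ in the variable $x_1$ and identify the new regular parameters. In case 0) ($\tfrac{q}{m}\ge\tfrac{\overline c}{p-1}$), the cross term $\overline x^{\,\overline c}\overline y\tau$ dominates or ties, and after dividing out the common power of $x_1$ one finds a unit times a linear-in-new-variable term, giving an unramified type 0 form. In cases 1) and 2), the term $\overline y^{\,p}\gamma$ strictly dominates; writing $\ell=\gcd(m,pq)=\sigma$, I set $m=\sigma\overline m$, $pq=\sigma\overline q$, choose $c',d'$ (resp.\ $\overline m\,c'-\overline q\,d'=1$) and define $u_1,v_1$ by the monomial relations in the statement. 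The uniqueness of $R\to R_1$ follows because the slope $\nu(u)/\nu(\overline v)$ forces the exponent matrix, exactly as in the standard-sequence discussion of Section~\ref{Sec2Ext}. When $\sigma=1$ the exponent $pq$ is coprime to $m$, so after the transform $u_1$ still appears to the first power in the expression for one parameter and $v_1$ to the $p$-th power in the other — type 1; when $\sigma=p$, the $p$ can be pulled out of the $x_1$-exponent, trading the $p$-th power from the $v$-side to the $u$-side — type 2.

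For the Jacobian and the recursion on $\overline c$, I would use the chain rule $J(S_1/R_1)=J(S_1/S)\,J(S/R)\,J(R/R_1)^{-1}$ together with the known Jacobian determinants of a single monomial quadratic transform. Since $J(S/R)=(x^{\overline c})$ and $J(S_1/S)$, $J(R/R_1)$ are explicit powers of $x_1$ (computed from the $2\times2$ determinants of the exponent matrices, which are $1$ for $S\to S_1$ and $\pm1$ for the normalized $R\to R_1$, with correction from the unit factors and from $x=x_1^m(\cdots)$), the new exponent $c_1$ is a linear expression in $\overline c$, $m$, $q$. Matching the $\tfrac{c_1}{p-1}$ formulas to $(\tfrac{\overline c}{p-1})m-q$ in case 1) and $(\tfrac{\overline c}{p-1})m-q+1$ in case 2) is then bookkeeping; the $+1$ in case 2) is the contribution of the extra factor of $p$ that passes through $\sigma$. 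Quasi-finiteness of $R_1\to S_1$ is inherited since the construction is a diagram of dominating maps between two-dimensional regular local rings with the prescribed forms, and $[\mathrm{QF}(\hat S_1):\mathrm{QF}(\hat R_1)]$ stays equal to $p$.

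The main obstacle I expect is organizing the coordinate changes so that the new parameters are genuinely \emph{regular} parameters after each step — in particular, verifying in cases 1) and 2) that after factoring out the dominant power of $x_1$ from $v$, the residual expression has the exact shape $(\text{new variable})^p\gamma' + x_1^{c_1}(\text{new variable})\tau' + \cdots$ (type 1) or collapses to a unit times a new $y_1$ (type 2), rather than some degenerate form; this requires keeping careful track of the unit series $\gamma,\tau$ and the polynomial $f(\overline x)$, and checking that the terms of $f$ that were \emph{not} cleared (those with $i>\tfrac{pq}{m}$) have order $\ge c_1$ and hence can be folded into the error term $x^{\overline c_1}(\cdots)$ or $x\Omega$. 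The comparison of orders, and the coprimality analysis of $\gcd(m,pq)$ versus $\gcd(m,q)=1$ (using that $\sigma\mid p$ hence $\sigma\in\{1,p\}$), is the technical heart.
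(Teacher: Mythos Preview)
Your approach is essentially the paper's: compare the $x_1$-orders $pq$ and $\overline c\,m+q$, split into the three cases, construct $R\to R_1$ from the slope data, and compute $c_1$ via the Jacobian chain rule $J(R_1/R)J(S_1/R_1)=J(S/R)J(S_1/S)$ with $J(S_1/S)=(x_1^{m+q-1})$ and $J(R_1/R)=(u_1^{\overline m+\overline q-1})$.

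The one place where your outline underestimates the work is the boundary case $\tfrac{q}{m}=\tfrac{\overline c}{p-1}$, which you fold into case~0) with the phrase ``dominates or ties.'' In the paper this tie case is its own subcase: after substitution one gets $\overline v=x_1^{pq}(y_1+\alpha)^{a'\overline c+b'}\Lambda$ with $\Lambda=\gamma_2(y_1+\alpha)^{\psi}+\gamma_1+x_1\Omega$ (here $\psi=\gcd(p-1,\overline c)$, $\psi m=p-1$), and $\Lambda$ need not be a unit. If $\Lambda$ is not a unit, the paper rules this out by observing that the complexity $m\cdot\mbox{ord}_{y_1}[(y_1+\alpha)^{a'\overline c+b'}\Lambda(0,y_1)]$ must divide $p$ (Remark~\ref{found9}), impossible since $1<m\mid p-1$. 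If $\Lambda$ is a unit, one still has to verify $\mbox{ord}_{y_1}v_1(0,y_1)=1$; the paper does this by an explicit derivative computation showing $\tfrac{\partial}{\partial y_1}v_1(0,y_1)=(\gamma_2(y_1+\alpha)^{\psi}+\gamma_1)^{m-1}\gamma_1$ is a unit, whence the complexity is $<p$ and hence $1$. Your ``unit times a linear-in-new-variable term'' does not come for free here.
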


\begin{proof}

Define a monomial valuation $\mu$ dominating $\hat S$ by prescribing that $\mu(\overline x)=m$, $\mu(\overline y)=q$ and for $0\ne \sum a_{ij}\overline x^i\overline y^j\in \hat S$, $\nu(\sum a_{ij}\overline x^i\overline y^j)=\min\{im+jq\mid a_{ij}\ne 0\}$.

Expand 
\begin{equation}\label{eq6}
\overline v=\sum_{i=1}^d \gamma_i\overline x^{\alpha_i}\overline y^{\beta_i}+\sum_{i>d}\gamma_i\overline x^{\alpha_i}\overline y^{\beta_j}
\end{equation}
where all $\gamma_i\in k$ are nonzero, $\overline x^{\alpha_i}y^{\beta_i}$ have minimal $\mu$ value $\rho$ for $1\le i\le d$ and $\overline x^{\alpha_i}\overline y^{\beta_i}$ have value larger than $\rho$ for $i>d$ and $\beta_1<\cdots<\beta_d$. By our choice of $\overline v$  we have  that 
\begin{equation}\label{eq7}
\beta_1>0.
\end{equation}
Substitute
 \begin{equation}\label{eq5}
\overline x=x_1^{m}(y_1+\alpha)^{a'}, \overline y=x_1^{q}(y_1+\alpha)^{b'}.
\end{equation}
into $u$ and the expression (\ref{eq6}) of $\overline v$ to obtain
$$
\begin{array}{lll}
u&=& x_1^{m}(y_1+\alpha)^{a'}\\
\overline v&=&x_1^{\alpha_1m+\beta_1q}(y_1+\alpha)^{a'\alpha_1+b'\beta_1}\Lambda
\end{array}
$$
where 
\begin{equation}\label{eq20}
\Lambda=\left(\sum_{i=1}^d\gamma_i(y_1+\alpha)^{\frac{\beta_i-\beta_1}{m}}+x_1\Omega\right)
\end{equation}
with  $\frac{\beta_i-\beta_1}{m}\in\NN$ for all $i$. This expression for $\overline v$ is shown in  \cite{Z} and in the proof of \cite[Theorem 8.4]{RS}.

Assume that $\Lambda$ is a unit. We will show later in this proof  that with our choice of variables, if $\Lambda$ is not a unit then we will reach the case where $R_1\rightarrow S_1$ is of type 0  in the conclusions of the theorem.

Define
\begin{equation}\label{eq11}
\overline \sigma=\mbox{gcd}(m,\alpha_1m+\beta_1q)=\mbox{gcd}(m,\beta_1q).
\end{equation}
Let 
\begin{equation}\label{eq1}
\tau=\mbox{Det}\left(\begin{array}{cc}
m&a'\\
\alpha_1 m+\beta_1q& \alpha_1a'+\beta_1b'\end{array}\right)=
\beta_1(mb'-a'q)=\beta_1>0.
\end{equation}
Let
\begin{equation}\label{eq8}
\phi=\mbox{Det}\left(\begin{array}{cc}
m&0\\
\alpha_1 m+\beta_1q& 1\end{array}\right)
=m>0.
\end{equation}

Let $R\rightarrow R^*$ be  defined by $u=u_1^{g}v_1^{g'}$, $\overline v=u_1^{h}\overline v_1^{h'}$ where $g,g',h,h'\in \NN$, $gh'-hg'=\pm 1$ and 
$$
u_1=x_1^{\overline \sigma}(y_1+\alpha)^c\Lambda^e,
\overline v_1=x_1^{\overline \sigma}(y_1+\alpha)^d\Lambda^f
$$
where 
\begin{equation}\label{eq3*}
\mbox{Det}\left(\begin{array}{cc}
\overline\sigma &c\\
\overline\sigma&d
\end{array}\right)
=\overline\sigma(d-c)=\tau=\beta_1
\end{equation}
and 
\begin{equation}\label{eq9}
\mbox{Det}\left(\begin{array}{cc}
\overline\sigma &e\\
\overline\sigma&f
\end{array}\right)
=\overline\sigma(f-e)=\phi=m.
\end{equation}

Now perform a single quadratic transform $R^*\rightarrow R_1$ so that $R_1$ is dominated by $S_1$ and  $R_1$ has regular parameters $u_1,v_1$ satisfying 
\begin{equation}\label{eq10}
u_1=x_1^{\overline\sigma}(y_1+\alpha)^{c}\Lambda^e,
v_1=\frac{\overline v_1}{u_1}-\Lambda(0,0)^{f-e}\alpha^{d-c}=(y_1+\alpha)^{d-c}\Lambda^{f-e}-\Lambda(0,0)^{f-e}\alpha^{d-c}.
\end{equation}
We have an expression for $R\rightarrow R_1$ of the form
$$
u=u_1^{\overline m}(v_1+\beta)^{\overline a_1'}, \overline v=u_1^{\overline q}(v_1+\beta)^{\overline b_1'}
$$
where $\overline m=\frac{m}{\overline\sigma}$, $\overline q=\frac{\alpha_1m+\beta_1q}{\overline\sigma}$
and $\beta=\gamma_1^{\frac{m}{\overline\sigma}}\alpha^{\frac{\beta_1}{\overline\sigma}}$.
We have that $\left(\frac{\overline v_1}{u_1}\right)(0,y_1)$ is a polynomial in $y_1$ since $d-c>0$ and $f-e>0$.


We now make a finer analysis. We have three cases:
\begin{enumerate}
\item[1)] $\overline x^{\overline c}\overline y$ is the unique minimal value term in the expansion (\ref{eq6}).
\item[2)] $\overline x^{\overline c}\overline y$ and $\overline y^p$ are  the two minimal value terms in the expansion (\ref{eq6}).\item[3)] $\overline y^p$ is the unique minimal value term in the expansion (\ref{eq6}).
\end{enumerate}

Suppose that we are in Case 1), so that $\overline x^{\overline c}\overline y$ is the unique minimal value term in the expansion (\ref{eq6}), so $\beta_1=1$. Further, $\Lambda=\gamma_1+x_1\Omega$ (with $\gamma_1\in k\ne 0$). Now $\overline \sigma(d-c)=\tau=\beta_1=1$, and $\overline \sigma(f-e)=m$. Thus $\overline \sigma=1$, $d-c=1$ and $f-e=m$. Thus
$$
\begin{array}{lll}
u_1&=&x_1(y_1+\alpha)^{c}\Lambda^e\\
v_1&=&(y_1+\alpha)(\gamma_1+x_1\Omega)^{m}-\alpha\gamma_1^{m}=\gamma_1y_1+x_1\Omega'.
\end{array}
$$
Thus $R_1\rightarrow S_1$ is unramifed and we are in Case 0) of the conclusions of the theorem. 

Suppose that we are in Case 2), so that 
 $\overline x^{\overline c}\overline y$ and $\overline y^p$ are the two  minimal value terms in the expansion (\ref{eq6}).
  The expansion of (\ref{eq6}) is then
$$
\overline v=\gamma_1x^{\overline c}\overline y+\gamma_2 \overline y^p+\mbox{higher value terms}.
$$
Since $\mu(\overline y^p)=\mu(\overline x^{\overline c}\overline y)$ we have that $(p-1)\mu(\overline y)=\overline c\mu(\overline x)$. Thus $\mbox{char }k\ne 2$ since $\mu(\overline y)\not\in \mu(\overline x)\ZZ$. Let $\psi=\mbox{gcd}(p-1,\overline c)$. Then in the substitution
$$
\overline x=x_1^m(y_1+\alpha)^{a'}, \overline y=x_1^q(y_1+\alpha)^{b'}
$$
with $mb'-a'q=1$ of (\ref{eq5}), we have $\psi m=p-1$ and $\psi q=\overline c$.
Substituting (\ref{eq5}) in $u$ and $\overline v$, we obtain
$$
u=x_1^m(y_1+\alpha)^{a'}
$$
and
$$
\overline v= \gamma_1x_1^{\overline cm+q}(y_1+\alpha)^{a'\overline c+b'}+\gamma_2 x_1^{qp}(y_1+\alpha)^{b'p}+\cdots= x_1^{qp}(y_1+\alpha)^{a'\overline c+b'}\Lambda
$$
where $\Lambda=\gamma_2(y_1+\alpha)^{\psi}+\gamma_1+x_1\Omega$. 

Suppose that $\Lambda$ is not a unit. 
Let $d_1=\mbox{ord}_{y_1}\left[(y_1+\alpha)^{a'\overline c+b'}\Lambda(0,y_1)\right]$. We have that $0<d_1<\infty$ since $\Lambda$ is not a unit. By \cite[Proposition 3.1]{C1} and since our extension is Galois, we have that the complexity $md_1$ of $R\rightarrow S_1$ divides $p=[L:K]$,
which is a contradiction to our assumption that $m>1$ and the fact that $m$ divides $p-1$.

Suppose  that $\Lambda$ is a unit. Following the analysis of the case when $\Lambda$ is a unit above, we have that $\tau=\beta_1=1$ and $\overline\sigma=\mbox{gcd}(m,\overline cm+q)=1$.
Thus from (\ref{eq3*}), we have $d-c=1$ and from (\ref{eq9}), we have $f-e=m$. From (\ref{eq10}), we obtain
$$
u_1=x_1(y_1+\alpha)^c\Lambda^e
$$
and
$$
v_1=(y_1+\alpha)\Lambda^m-\alpha(\gamma_2\alpha^{\psi}+\gamma_1)^m.
$$
We compute
$$
v_1(0,y_1)=(y_1+\alpha)(\gamma_2(y_1+\alpha)^{\psi}+\gamma_1)^m-\alpha(\gamma_2\alpha^{\psi}+\gamma_1)^m.
$$
We have
$$
\begin{array}{lll}
\frac{\partial}{\partial y_1}v_1(0,y_1)&=&   (\gamma_2(y_1+\alpha)^{\psi}+\gamma_1)^m+(y_1+\alpha)m(\gamma_2(y_1+\alpha)^{\psi}+\gamma_1)^{m-1}\psi\gamma_2 \psi(y_1+\alpha)^{\psi-1}\\
&=& (\gamma_2(y_1+\alpha)^{\psi}+\gamma_1)^{m-1}[\gamma_2(y_1+\alpha)^{\psi}+\gamma_1+m(y_1+\alpha)\psi\gamma_2(y_1+\alpha)^{\psi-1}]\\
&=& (\gamma_2(y_1+\alpha)^{\psi}+\gamma_1)^{m-1}[\gamma_2(y_1+\alpha)^{\psi}+\gamma_1+\gamma_2(p-1)(y_1+\alpha)^{\psi-1}]\\
&=&(\gamma_2(y_1+\alpha)^{\psi}+\gamma_1)^{m-1}\gamma_1
\end{array}
$$
is a unit so $\mbox{ord}_{y_1}v_1(0,y_1)=1$. Thus the complexity of $R_1\rightarrow S_1$ is $m<p$,  so the complexity must be one, so that $R_1\rightarrow S_1$ is unramified and we are in Case 0) of the conclusions of the theorem.

Now suppose that we are in Case 3) so that $\overline y^p$ is the unique minimal value term of the expansion (\ref{eq6}) of $\overline v$. Then $\beta_1=p$ in (\ref{eq6}) and $\Lambda=\gamma_1+x_1\Omega$ is a unit in (\ref{eq20}). In the analysis of the case when $\Lambda$ is  a unit following (\ref{eq20}), we have that 
$d-c=1$ if $\overline\sigma=p$ and $d-c=p$ if $\overline\sigma=1$, so that 
$$
\mbox{ord}_{y_1}v_1(0,y_1)=\mbox{ord}_{y_1}(y_1+\alpha)^{d-c}\gamma_1^{f-e}-\alpha^{d-c}\gamma_1^{f-e}
=\left\{\begin{array}{ll}
1&\mbox{ if }\overline\sigma=p\\
p &\mbox{ if }\overline \sigma=1.
\end{array}\right.
$$
Thus from (\ref{eq10}), we see that $R_1\rightarrow S_1$ is of type 1
 if $\overline\sigma=p$ and is of type 2 if $\overline\sigma=1$.


We now establish that if Case 3) above holds ($\overline y^p$ is the unique minimal value term in (\ref{eq6})), then the invariant $\overline \sigma=\mbox{gcd}(m,\alpha_1m+\beta_1q)$, defined in (\ref{eq11}), is such that $\overline\sigma=\mbox{gcd}(m,pq)$, so that the $\sigma$ defined in the statement of the theorem is $\overline\sigma$. Since $\overline y^p$ is the unique minimal value term in (\ref{eq6}), then 
$\alpha_1=0$ and $\beta_1=p$, giving the desired equality. 

In our analysis above, we saw that if $\mu(\overline x^{\overline c}\overline y)\le \mu (\overline y^p)$ then $R_1\rightarrow S_1$ is in Case 0) of the conclusions of the theorem and if $\mu(\overline x^{\overline c}\overline y)> \mu (\overline y^p)$ then $R_1\rightarrow S_1$ is in Case 1) or Case 2) of the conclusions of the theorem. Since $\mu(\overline x^{\overline c}\overline y)=(m\overline c+q)$ and $\mu(\overline y^p)=pq$, we have that if  $\frac{q}{m}\ge  \frac{\overline c}{p-1}$ then $R_1\rightarrow S_1$ is unramifed and if $\frac{q}{m}< \frac{\overline c}{p-1}$
then $R_1\rightarrow S_1$ is either of type 1 or type 2.

We now establish the formulas for the Jacobian ideal $J(S_1/R_1)$.
We have that 
$$
J(R_{1}/R)J(S_{1}/R_{1})=J(S/R)J(S_{1}/S),
$$
 where
$J(S_{1}/R_{1})=x_{1}^{c_{1}}S_1$, $J(S/R)=\overline x^{\overline c}S$ and $J(S_{1}/S)=x_{1}^{m+q-1}S_1$.

We have that  $\sigma=\mbox{gcd}(m,pq)$ which is 1 or $p$.
Thus $m=\sigma \overline m$ and $pq=\sigma\overline q$. Further, we have shown that $\sigma=1$ implies $R_{1}\rightarrow S_{1}$ is of type 1 and $\sigma=p$ implies $R_{1}\rightarrow S_{1}$ is of type 2.

Now $\sigma=1$ implies $p$ divides $\overline q$ and $\sigma=p$ implies $p$ does not divide $\overline q$, since $p$ divides $m$ implies $q\not| q=\overline q$. We have that $J(R_{1}/R)=u_{1}^{\overline m+\overline q-1}R_1$ and thus
$$
J(R_{1}/R)S_{1}=\left\{
\begin{array}{ll}
x_{1}^{m+pq-1}S_1&\mbox{ if }\sigma=1\\
x_{1}^{m+pq-p}S_1&\mbox{ if }\sigma=p.
\end{array}\right.
$$
In the case $\sigma=1$ we have 
$(x_{1}^{m+pq-1})(x_{1}^{c_{1}})S_1=(\overline x^{\overline c})(x_{1}^{m+q-1})S_1$ so 
$(x_{1}^{c_{1}})S_1=(x_{1}^{m\overline c-(p-1)q})S_1$ and we obtain the formula of Case 1) of the conclusions of the theorem. 
In the case $\sigma=p$ we have 
$(x_{1}^{m+pq-p})(x_{1}^{c_{1}})S_1=(\overline x^{\overline c})(x_{1}^{m+q-1})S_1$ so 
$(x_{1}^{c_{1}})S_1=(x_{1}^{m\overline c-(p-1)q+(p-1)})S_1$ and we obtain the formula of Case  2) of the conclusions of the theorem.
\end{proof}

\begin{Remark}\label{RemarkA} Suppose that $\omega$ is a rational rank 1 nondiscrete valuation of $L$ dominating $S$ and $R\rightarrow S$ is of type 1. Let $\nu$ be the restriction of $\omega$ to $K$. Let $\overline x=u$ and  $\overline y$ be the difference of $y$ and a nonzero polynomial in $\overline x$ so that $\omega(\overline y)\not\in \omega(\overline x)\ZZ$. Let $\overline v$ be the change of variables in Theorem \ref{TheoremA}. 

Define $m$ and $q$ to be the unique relatively prime positive integers such that $m\omega(\overline y)=q\omega(x)$. We have that $m>0$. There exist $0\ne \alpha\in k$ and $a',b'\in \NN$ such that $mb'-qa'=1$ and if $S\rightarrow S_1$ is the sequence of quadratic transforms defined by 
$$
\overline x=x_1^m(y_1+\alpha)^{a'}, \overline y=x_1^q(y_1+\alpha)^{b'}
$$
then $\omega$ dominates $S_1$.

Let $\nu$ be the restriction of $\omega$ to $K$. The formulas of Cases 0), 1) and 2) of Theorem \ref{TheoremA} can then be stated in terms of the valuation $\omega$. They are:
\begin{enumerate}
\item[0)] If $\frac{q}{m}\ge  \frac{\overline c}{p-1}$  then $R_1\rightarrow S_1$ is unramified.\item[1)] If $\frac{q}{m}< \frac{\overline c}{p-1}$ and $\sigma=1$ then $R_1\rightarrow S_1$ is of type 1 and
$$
\left(\frac{c_{1}}{p-1}\right)\omega(x_{1})=\left(\frac{\overline c}{p-1}\right)\omega(x)-\omega(\overline y).
$$  
\item[2)] If $\frac{q}{m}< \frac{\overline c}{p-1}$ and $\sigma=p$ then $R_1\rightarrow S_1$ is of type 2 and 
$$
\left(\frac{c_{1}}{p-1}\right)\omega(x_{1})=\left(\frac{\overline c}{p-1}\right)\omega(x)-\omega(\overline y)+\omega(x_{1}).
$$
\end{enumerate}
In the conclusions of the theorem, suppose that $R_1\rightarrow S_1$ is of type 1. Then we necessarily have that $\nu(\overline v)\not\in \nu(u)\ZZ$ since $\sigma=1$ and thus $\overline m=m>1$. 

\end{Remark}

\begin{Theorem}\label{TheoremB} Suppose that $R\rightarrow S$  is of type 2 with respect to regular parameters $x,y$ in $S$ and $u,v$ in $R$ and that $J(S/R)=(x^{\overline c})$.  Let $g(u)\in k[u]$ be a polynomial with no constant term. Make the change of variables, letting $\overline v=v-g(u)$ and $\overline y=\overline v$, so that $x,\overline y$ are regular parameters in $S$ and $u,\overline v$ are regular parameters in $R$.

Suppose that $m,q$ are positive integers with $\mbox{gcd}(m,q)=1$. Let $\alpha$ be a nonzero element of $k$.
Consider the sequence of quadratic transforms $S\rightarrow S_1$ so that $S_1$ has regular parameters $x_1,y_1$ defined by
$$
x=x_1^{m}(y_1+\alpha)^{a'}, \overline y=x_1^{q}(y_1+\alpha)^{b'}
$$
where  $a',b'\in \NN$ are such that $mb'-q a'=1$. 

Let $\sigma=\mbox{gcd}(pm,q)$ which is 1 or $p$. There exists a unique sequence of quadratic transforms $R\rightarrow R_1$ such that $R_1$ has regular parameters $u_1,v_1$ defined by 
$$
u=u_1^{\overline m}(v_1+\beta)^{c'}, \overline v=u_1^{\overline q}(v_1+\beta)^{d'}
$$
where $pm=\sigma\overline m$, $q=\sigma \overline q$, $\overline md'-c'\overline q=1$ and $0\ne\beta\in k$,
 giving
  a commutative diagram of homomorphisms
$$
\begin{array}{lll}
R_1&\rightarrow &S_1\\
\uparrow&&\uparrow\\
R&\rightarrow &S
\end{array}
$$
such that $R_1\rightarrow S_1$ is quasi finite. 
We have that  $J(S_1/R_1)=(x_1^{c_1})$ for some positive integer $c_1$.   Further: 
\begin{enumerate}
\item[1)] If  $\sigma=1$ then $R_1\rightarrow S_1$ is of type 1 and
$$
\left(\frac{c_{1}}{p-1}\right)=\left(\frac{\overline c}{p-1}\right)m-m.
$$  
\item[2)] If  $\sigma=p$ then $R_1\rightarrow S_1$ is of type 2 and 
$$
\left(\frac{c_{1}}{p-1}\right)=\left(\frac{\overline c}{p-1}\right)m-m+1.
$$
\end{enumerate}

\end{Theorem}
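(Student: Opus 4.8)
The plan is to follow the same strategy as in the proof of Theorem~\ref{TheoremA}, but the argument is considerably shorter: since $u=\gamma x^{p}$ is already, up to the unit $\gamma$, a pure power of $x$, there is nothing like the analysis of whether the leading factor $\Lambda$ is or is not a unit that appears there, and correspondingly no ``type~0'' outcome can occur. First I would record that the change of variables is harmless. Because $u=\gamma x^{p}$ we have $g(u)\in x^{p}S$, so $x,\overline v$ are again a regular system of parameters of $S$ with $u=\gamma x^{p}$ and $\overline v=y\tau'+x\Omega'$ of type~2 shape, and $J(S/R)=(x^{\overline c})$ is unchanged, being independent of the choice of regular parameters. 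Substituting the given quadratic transform $x=x_{1}^{m}(y_{1}+\alpha)^{a'}$, $\overline y=x_{1}^{q}(y_{1}+\alpha)^{b'}$ (with $mb'-qa'=1$) into $u$ and $\overline v=\overline y$, and using that $m,q\ge 1$ forces the reduction of $\gamma$ modulo $x_{1}$ to be the constant $\gamma_{0}:=\gamma(0,0)\in k^{\times}$, I get
\[
u=x_{1}^{pm}(y_{1}+\alpha)^{pa'}(\gamma_{0}+x_{1}\Omega''),\qquad \overline v=x_{1}^{q}(y_{1}+\alpha)^{b'}
\]
in $S_{1}$, for some $\Omega''\in S_{1}$.

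Next I would construct $R\to R_{1}$ explicitly. Put $\sigma=\gcd(pm,q)$; since $\gcd(m,q)=1$ and $p$ is prime, $\sigma\in\{1,p\}$. Set $\overline m=pm/\sigma$, $\overline q=q/\sigma$, so $\gcd(\overline m,\overline q)=1$, choose $c',d'$ with $\overline m d'-c'\overline q=1$, and define $u_{1}=u^{d'}\overline v^{-c'}$ and $v_{1}=u^{-\overline q}\overline v^{\,\overline m}-\beta$ with $\beta=\alpha^{p/\sigma}\gamma_{0}^{-\overline q}$. A short computation recovers $u=u_{1}^{\overline m}(v_{1}+\beta)^{c'}$ and $\overline v=u_{1}^{\overline q}(v_{1}+\beta)^{d'}$, so exactly as in Section~\ref{Sec2Ext} the localization $R_{1}$ of $R[u_{1},v_{1}]$ at the center of $S_{1}$ is obtained from $R$ by a sequence of quadratic transforms and has $u_{1},v_{1}$ as a regular system of parameters; uniqueness of this $R\to R_{1}$, pinned down by quasi-finiteness of $R_{1}\to S_{1}$, is again as in Section~\ref{Sec2Ext}. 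Substituting the monomial expressions for $u$ and $\overline v$ gives $u_{1}=(\text{unit})\,x_{1}^{\sigma}$ in $S_{1}$ and
\[
v_{1}\equiv\gamma_{0}^{-\overline q}\bigl((y_{1}+\alpha)^{p/\sigma}-\alpha^{p/\sigma}\bigr)\pmod{x_{1}S_{1}}.
\]
If $\sigma=p$ the right-hand side is $\gamma_{0}^{-\overline q}y_{1}$, so $u_{1}=(\text{unit})x_{1}^{p}$ and $v_{1}=(\text{unit})y_{1}+x_{1}(\cdots)$: $R_{1}\to S_{1}$ is of type~2. If $\sigma=1$ the Frobenius identity $(y_{1}+\alpha)^{p}=y_{1}^{p}+\alpha^{p}$ collapses the right-hand side to $\gamma_{0}^{-q}y_{1}^{p}$, so $u_{1}=(\text{unit})x_{1}$ and $v_{1}=(\text{unit})y_{1}^{p}+x_{1}(\cdots)$: $R_{1}\to S_{1}$ is of type~1. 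In either case $m_{R_{1}}S_{1}=(x_{1}^{\sigma},v_{1})S_{1}$ has radical $m_{S_{1}}$, so $R_{1}\to S_{1}$ is quasi-finite, and $\beta\ne0$ makes $v_{1}+\beta$ a unit as required.

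The formulas for $c_{1}$ then follow from the chain-rule identity $J(R_{1}/R)\,J(S_{1}/R_{1})=J(S/R)\,J(S_{1}/S)$. Computing the Jacobians of the two monomial substitutions gives $J(S_{1}/S)=(x_{1}^{m+q-1})$ and $J(R_{1}/R)=(u_{1}^{\overline m+\overline q-1})$, which equals $(x_{1}^{\sigma(\overline m+\overline q-1)})=(x_{1}^{pm+q-\sigma})$ in $S_{1}$, while $J(S/R)=(x^{\overline c})=(x_{1}^{m\overline c})$ in $S_{1}$. Comparing exponents of $x_{1}$ yields $c_{1}=m\overline c-(p-1)m-1+\sigma$. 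Setting $\sigma=1$ gives $c_{1}=m\overline c-(p-1)m$, hence $c_{1}/(p-1)=(\overline c/(p-1))m-m$; setting $\sigma=p$ gives $c_{1}=m\overline c-(p-1)m+(p-1)$, hence $c_{1}/(p-1)=(\overline c/(p-1))m-m+1$. These are the asserted formulas, and since a type~2 extension has $\overline c\ge p$ one checks $c_{1}>0$.

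I expect the only friction to be bookkeeping: keeping straight the four Jacobian ideals and their $x_{1}$-exponents, and verifying that the $\beta$ forced by $v_{1}\in m_{S_{1}}$ is nonzero so that the relations $u=u_{1}^{\overline m}(v_{1}+\beta)^{c'}$, $\overline v=u_{1}^{\overline q}(v_{1}+\beta)^{d'}$ genuinely describe a quadratic-transform sequence. The two ``type'' conclusions rest on the single fact that $p/\sigma$ is $1$ when $\sigma=p$ and $p$ when $\sigma=1$, with the Frobenius identity producing the $p$-th power in the latter case; since $u$ is already monomial, there is nothing analogous to Cases~1)--3) in the proof of Theorem~\ref{TheoremA} to handle.
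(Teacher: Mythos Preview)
Your proof is correct and follows essentially the same route as the paper's: substitute the monomial transform into $u=\gamma x^{p}$ and $\overline v=\overline y$, read off $\sigma=\gcd(pm,q)$, build $R_1$ so that $u_1=(\text{unit})x_1^{\sigma}$ and $v_1\equiv \gamma_0^{-\overline q}\bigl((y_1+\alpha)^{p/\sigma}-\alpha^{p/\sigma}\bigr)\pmod{x_1}$, and then get $c_1$ from the chain rule on Jacobians. The only differences from the paper are cosmetic --- you write $u_1,v_1$ down directly as Laurent monomials in $u,\overline v$ rather than passing through the paper's intermediate ring $R^*$, and you compute the exponent of $(y_1+\alpha)$ in $v_1+\beta$ as $(p/\sigma)(mb'-qa')=p/\sigma$ instead of via the determinant $\tau$; the content is the same.
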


\begin{proof}
Substitute
 \begin{equation}
x=x_1^{m}(y_1+\alpha)^{a'}, \overline y=x_1^{q}(y_1+\alpha)^{b'}.
\end{equation}
into $u$ and  $\overline v$ to obtain
$$
\begin{array}{lll}
u&=& x_1^{mp}(y_1+\alpha)^{a'p}(\lambda+x_1\Omega)\\
\overline v&=&x_1^{q}(y_1+\alpha)^{b'}
\end{array}
$$
where $0\ne \lambda\in k$.
 Let
$$
\sigma=\mbox{gcd}(mp,q).
$$
Let 
\begin{equation}
\tau=\mbox{Det}\left(\begin{array}{cc}
mp&a'p\\
q&b'\end{array}\right)
=p(mb'-a'q)=p.
\end{equation}
Let $R\rightarrow R^*$ be the sequence of quadratic transforms defined by $u=u_1^{g}\overline v_1^{g'}$, $\overline v=u_1^{h}\overline v_1^{h'}$ where $g,g',h,h'\in \NN$, $gh'-hg'=\pm 1$ and 
$$
u_1=x_1^{\sigma}(y_1+\alpha)^c(\lambda+x_1\Omega)^e,
\overline v_1=x_1^{\sigma}(y_1+\alpha)^d(\lambda+x_1\Omega)^f
$$
where 
\begin{equation}
\mbox{Det}\left(\begin{array}{cc}
\sigma &c\\
\sigma&d
\end{array}\right)
=\sigma(d-c)=\tau=p
\end{equation}
Now perform a single quadratic transform $R^*\rightarrow R_1$ so that $R_1$ is dominated by $S_1$ and 
$R_1$ has regular parameters $u_1,v_1$ satisfying 
\begin{equation}
u_1=x_1^{\sigma}(y_1+\alpha)^{c}(\lambda+x_1\Omega)^e,
v_1=\frac{\overline v_1}{u_1}-\alpha^{d-c}\lambda^{f-e}=(y_1+\alpha)^{d-c}(\lambda+x_1\Omega)^{f-e}-\alpha^{d-c}\lambda^{f-e}.
\end{equation}
We have an expression for $R\rightarrow R_1$ of the form
$$
u=u_1^{\overline m}(v_1+\beta)^{\overline a_1'}, \overline v=u_1^{\overline q}(v_1+\beta)^{\overline b_1'}
$$
where $\overline m=\frac{mp}{\sigma}$, $\overline q=\frac{q}{\sigma}$ and $\beta=\alpha^{\frac{p}{\sigma}}\gamma(0,0)^{-\frac{q}{\sigma}}$.
We have that $v_1(0,y_1)=(y_1+\alpha)^{d-c}\lambda^{f-e}-\alpha^{d-c}\lambda^{f-e}$ with $d-c>0$.
Hence
$$
0<d_1=\mbox{ord}_{y_1}v_1(0,y_1)<\infty.
$$
If $\sigma=p$ then $d-c=1$ so that $d_1=1$ and the complexity of $R_1\rightarrow S_1=p$. We then have that $R_1\rightarrow S_1$ is of type 2, so that it is in Case 2 of the conclusions of the theorem. If $\sigma=1$ then $d-c=p$ and $d_1=p$ so that the complexity of $R_1\rightarrow S_1$ is $p$ and it is in Case 1 of the conclusions of the theorem.

If  $\sigma=1$ then  $p$ divides $\overline m$ and $\sigma=p$ implies $p$ does not divide $\overline m$, since $p$ then divides $q$ which implies $p$ does not divide $m=\overline m$.

 We have that $J(R_{1}/R)=(u_{1}^{\overline m+\overline q-1})$ and thus
$$
J(R_{1}/R)S_{1}=\left\{
\begin{array}{ll}
x_{1}^{pm+q-1}S_1&\mbox{ if }\sigma=1\\
x_{1}^{pm+q-p}S_1&\mbox{ if }\sigma=p.
\end{array}\right.
$$
In the case $\sigma=1$ we have 
$(x_{1}^{pm+q-1})(x_{1}^{c_{1}})S_1=(x^{\overline c})(x_{1}^{m+q-1})S_1$ so 
$(x_{1}^{c_{1}})S_1=(x_{1}^{m\overline c-m(p-1)})S_1$ and we obtain the formula of Case 1) of the statement of the theorem.

In the case $\sigma=p$ we have 
$(x_{1}^{pm+q-p})(x_{1}^{c_{1}})S_1=(x^{\overline c})(x_{1}^{m+q-1})S_1$ so 
$(x_{1}^{c_{1}})S_1=(x_{1}^{m\overline c-(p-1)m+(p-1)})S_1$ and we obtain the formula of Case 2) of the statement of the theorem.
\end{proof}

\begin{Remark}\label{RemarkB} Suppose that $\omega$ is a nondiscrete rational rank 1 valuation of $L$ dominating $S$ and $R\rightarrow S$ is of type 2.   Let $\nu$ be the restriction of $\omega$ to $K$. 
   Make the change of variables, letting $\overline v$ be the difference of $v$ and a polynomial in $u$ so that $\omega(\overline v)\not\in \omega(u)\ZZ$ and letting $\overline y=\overline v$.

Define $m$ and $q$ to be the unique relatively prime positive integers such that $m\omega(\overline y)=q\omega(x)$.  There exist $0\ne \alpha\in k$ and $a',b'\in \NN$ such that $mb'-qa'=1$ and if $S\rightarrow S_1$ is the sequence of quadratic transforms defined by 
$$
x=x_1^m(y_1+\alpha)^{a'}, \overline y=x_1^q(y_1+\alpha)^{b'}
$$
then $\omega$ dominates $S_1$.

The formulas of Cases 1) and 2) of Theorem \ref{TheoremB} can then be stated in terms of the valuation $\omega$. They are:
\begin{enumerate}
\item[1)] If  $\sigma=1$ then $R_1\rightarrow S_1$ is of type 1 and
$$
\left(\frac{c_{1}}{p-1}\right)\omega(x_{1})=\left(\frac{\overline c}{p-1}\right)\omega(x)-\omega(x).
$$  
\item[2)] If  $\sigma=p$ then $R_1\rightarrow S_1$ is of type 2 and 
$$
\left(\frac{c_{1}}{p-1}\right)\omega(x_{1})=\left(\frac{\overline c}{p-1}\right)\omega(x)-\omega(x)+\omega(x_{1}).
$$
\end{enumerate}

If $R_1\rightarrow S_1$ is of type 2, then 
we  have that $\nu(\overline y)\not\in \nu(x)\ZZ$, since $\mbox{gcd}(pm,q)=p$. 
\end{Remark}

We will show that $\nu(\overline y)\not\in \nu(x)\ZZ$ if $R_1\rightarrow S_1$ is of type 2.  We have that $\mbox{gcd}(pm,q)=p$.
If $\nu(\overline y)\in\nu(x)\ZZ$, then  $\nu(\overline y)=q\nu(x)$ and since $p\mid q$, we  have that  $\nu(\overline v)=\nu(u^{\overline q})$  with $\overline q=\frac{q}{p}$, a contradiction to the assumption that $\nu(\overline v)\not\in \nu(u)\ZZ$.

\section{Switching of types of extensions under blowing up}\label{SecSwitch}

Suppose that $K$ and $L$ are two dimensional algebraic function fields over an algebraically closed field $k$ of characteristic $p>0$ and $K\rightarrow L$ is an Artin-Schreier extension. Suppose that $R$ is a regular algebraic local ring of $K$ and $S$ is a regular algebraic local ring of $S$ such that $S$ dominates $R$ and $R\rightarrow S$ is of type 1 or 2 as defined at the beginning of Section \ref{SecCalc}. Further assume that the Jacobian ideal $J(S/R)$ satisfies $\sqrt{J(S/R)}=xS$. Let $P=uR$ and $Q=xS$. Then $R\rightarrow S$ is well prepared and  the regular parameters $u,v$ and $x,y$ are admissible parameters. Such an extension $R\rightarrow S$ exists by Remark \ref{RemarkN1} and Proposition \ref{Prop1*}.

Inductively applying Theorems \ref{TheoremA} and  \ref{TheoremB},  and making choices for the construction of $S_i\rightarrow S_{i+1}$ consistent with the assumptions of Theorems \ref{TheoremA} and \ref{TheoremB},
we construct a diagram where the horizontal sequences are birational extensions of regular local rings (sequences of quadratic transforms)
\begin{equation}\label{eq30}
\begin{array}{ccccccccc}
S&=&S_0&\rightarrow &S_1&\rightarrow & S_2&\rightarrow & \cdots\\
&&\uparrow&&\uparrow&&\uparrow&&\\
R&=&R_0&\rightarrow &R_1&\rightarrow & R_2&\rightarrow & \cdots
\end{array}
\end{equation}

Each $R_i\rightarrow S_i$ is well prepared of type 0, 1 or 2.

Each $R_i$ has allowable regular parameters $(u_i,v_i)$ and $(u_i,\overline v_i)$ and each $S_i$ has allowable regular parameters $(x_i,y_i)$ and $(\overline x_i,\overline y_i)$. The map $S_i\rightarrow S_{i+1}$ is defined by
\begin{equation}\label{eqN1}
\overline x_i=x_{i+1}^{m_{i+1}}(y_{i+1}+\alpha_{i+1})^{a_{i+1}'},
\overline y_i=x_{i+1}^{q_{i+1}}(y_{i+1}+\alpha_{i+1})^{b_{i+1}'}
\end{equation}
and the map $R_i\rightarrow R_{i+1}$ is defined by
\begin{equation}\label{eqN2}
u_i=u_{i+1}^{\overline m_{i+1}}(v_{i+1}+\beta_{i+1})^{c_{i+1}'},
\overline v_i=u_{i+1}^{\overline q_{i+1}}(v_{i+1}+\beta_{i+1})^{d_{i+1}'}
\end{equation}
where $0\ne \alpha_{i+1}, 0\ne \beta_{i+1}\in k$. 
If $R_i\rightarrow S_i$ is of type 1 or of type 2 then $\overline x_i$, $\overline y_i$ and $\overline v_i$ are defined by our changes of variables in Theorem \ref{TheoremA} or \ref{TheoremB}.   
If  $R_i\rightarrow S_i$ is of type 0, then we take $\overline x_i=u_i$ and $\overline y_i=\overline v_i=v_i$.
If $R_i\rightarrow S_i$ is of type 2, we will impose the extra condition that 
\begin{equation}\label{eqN3}
\overline m_{i+1}=\frac{pm_{i+1}}{\mbox{gcd}(pm_{i+1},a_{i+1})}>1.
\end{equation}

We will say that the sequence (\ref{eq30}) switches infinitely often if there are infinitely many $i$ such that $R_i\rightarrow S_i$ is of type 1 and there are infinitely may $i$ such that $R_i\rightarrow S_i$ is of type 2.

Since $\mbox{trdeg}_{K}=2$,  we have that $\cup_{i=1}^{\infty} R_i$ and $\cup_{i=1}^{\infty}S_i$ are valuation rings (by \cite[Lemma12]{Ab1}). Further, given $f\in R_i$ (or $f\in S_i$), there exists $j\ge i$ such that there is an expression $f=u_i^{t_i}\gamma_i$ where $t_i\in \NN$ and $\gamma_i$ is a unit in $R_j$ (or $f=x_i^{t_i}\gamma_i$ where $t_i\in \NN$ and $\gamma_i$ is a unit in $S_j$), as shown for instance in \cite{Ab1}.

Let $\nu$ and $\omega$ be valuations which have these respective valuation rings and such that $\omega|K=\nu$. These valuations are uniquely determined up to equivalence of valuations.
We have that $\omega$ and $\nu$ are  nondiscrete rational rank 1 valuations, with  value groups
$$
\nu K=\cup_{i=1}^{\infty}\frac{1}{\overline m_1\overline m_2\cdots \overline m_i}\ZZ\nu(u)\mbox{ and }
\omega L=\cup_{i=1}^{\infty}\frac{1}{ m_1 m_2\cdots  m_i}\ZZ\nu(x).
$$

Equation (\ref{eqN3}) is just the statement that $\nu(\overline v_{i})\not \in \ZZ\nu(u_{i})$. The condition that all $m_{i+1}>1$  in Theorem \ref{TheoremA} is just the statement that $\omega(\overline y_{i})\not\in \ZZ\omega(\overline x_{i})$.

Suppose that  $\overline \omega$ is a valuation ring of $L$ which dominates $S$ which is nondiscrete of rational rank 1 and $\overline \nu=\overline \omega|K$.  Then we can inductively construct 
a sequence (\ref{eq30}) so that $\omega$ dominates $S_i$ for all $i$, and so $\mathcal O_{\overline \nu}=\cup_{i=1}^{\infty} R_i$ and $\mathcal O_{\overline \omega}=\cup_{i=1}^{\infty}S_i$, so that the valuations $\omega$ and $\nu$ determined by the sequence are $\overline\omega$ and $\overline \nu$ respectively (up to equivalence of valuations).

The complexity of the maps $R_i\rightarrow S_i$ in the diagram (\ref{eq30})  must  either be $p$ for  all $i$, or will be $p$ until some $i_0$ and then the complexity will be $1$ for all $j\ge i_0$, so that $R_j\rightarrow S_j$ is of type 1 or 2 for $j<i_0$ and $R_j\rightarrow S_j$ is unramified (has type 0) for all $j\ge i_0$.

 We will say  a sequence (\ref{eq30}) has stable complexity $p$ if the complexity of $R_i\rightarrow S_i$ is $p$ for all $i\ge 0$.
With this assumption,  each map $R_i\rightarrow S_i$ in (\ref{eq30}) is either of type 1 or of type 2. We draw the following conclusions from Theorems \ref{TheoremA} and \ref{TheoremB}.

Assume that the stable complexity of a sequence (\ref{eq30}) is $p$. If $R_r\rightarrow S_r$ is of type 1, then $S_r\rightarrow S_{r+1}$ is the standard sequence of quadratic transforms along $\omega$. Further, $R_r\rightarrow R_{r+1}$ is the standard sequence of quadratic transforms along $\nu$ unless $m_{r+1}=p$. In this case, $\overline m_{r+1}=1$, so that the standard sequence of quadratic transforms of $R_r$ along $\nu$ dominates $R_{r+1}$, and $R_{r+1}\rightarrow S_{r+1}$ is of type 2.

If $R_r\rightarrow S_r$ is of type 2, then $R_r\rightarrow R_{r+1}$ is the standard sequence of quadratic transforms along $\nu$. Further, $S_r\rightarrow S_{r+1}$ is the standard sequence of quadratic transforms along $\omega$ unless $\overline m_{r+1}=p$. In this case, $m_{r+1}=1$, so that the standard sequence of quadratic transforms of $S_r$ along $\omega$ dominates $S_{r+1}$, and $R_{r+1}\rightarrow S_{r+1}$ is of type 1.

\begin{Proposition} \label{Prop2} Suppose that a sequence (\ref{eq30}) has stable complexity $p$. Then the sequence (\ref{eq30}) switches infinitely often if and only if $vK$ is $p$-divisible.
\end{Proposition}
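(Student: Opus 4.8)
The plan is to translate $p$-divisibility of $\nu K$ into a statement about the $p$-adic valuations of the integers $\overline m_i$, and then to read off from Theorems \ref{TheoremA} and \ref{TheoremB}, together with the discussion immediately preceding the Proposition, exactly which transitions in the diagram (\ref{eq30}) force $p\mid\overline m_i$.

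First I would record the following reformulation. Since $\nu(u)>0$ and $\nu K=\bigcup_{i\ge 1}\frac{1}{\overline m_1\overline m_2\cdots\overline m_i}\ZZ\,\nu(u)$ is an increasing union of cyclic groups, $\nu K$ is $p$-divisible if and only if $\frac{1}{p\,\overline m_1\cdots\overline m_i}\nu(u)\in\nu K$ for every $i$, that is, if and only if for each $i$ there is some $j\ge i$ with $p\mid\overline m_{i+1}\cdots\overline m_j$; since each $\overline m_k$ is a finite positive integer, this holds exactly when $v_p(\overline m_1\cdots\overline m_j)\to\infty$, equivalently when $p\mid\overline m_i$ for infinitely many $i$ (here $v_p$ denotes the $p$-adic valuation).

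Next I would determine that set of indices. Because the stable complexity is $p$, Theorems \ref{TheoremA} and \ref{TheoremB} force every $R_i\to S_i$ to be of type $1$ or type $2$, and the transition from level $i-1$ to level $i$ falls into one of four cases. If $R_{i-1}\to S_{i-1}$ is of type $1$, then by Theorem \ref{TheoremA} either $R_i\to S_i$ is of type $1$ with $\overline m_i=m_i$ and $\sigma=\gcd(m_i,pq_i)=1$, so $p\nmid\overline m_i$, or $R_i\to S_i$ is of type $2$, in which case $m_i=p$ and $\overline m_i=1$ by the discussion preceding the Proposition, so again $p\nmid\overline m_i$. If $R_{i-1}\to S_{i-1}$ is of type $2$, then by Theorem \ref{TheoremB} either $R_i\to S_i$ is of type $2$ with $\overline m_i=m_i$ and $\sigma=\gcd(pm_i,q_i)=p$, which forces $p\mid q_i$ and hence $p\nmid m_i=\overline m_i$, or $R_i\to S_i$ is of type $1$ with $\sigma=1$ and $\overline m_i=pm_i$, so that $p\mid\overline m_i$. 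Therefore $p\mid\overline m_i$ if and only if $R_{i-1}\to S_{i-1}$ is of type $2$ and $R_i\to S_i$ is of type $1$.

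Finally I would observe that the condition ``$R_{i-1}\to S_{i-1}$ is of type $2$ and $R_i\to S_i$ is of type $1$'' holds for infinitely many $i$ if and only if the sequence (\ref{eq30}) switches infinitely often: one implication is immediate, and for the converse, if this condition held for only finitely many $i$ then beyond some index every type-$2$ map in (\ref{eq30}) would be followed by a type-$2$ map, so the sequence would be eventually of a single type, contradicting the definition of switching infinitely often. Combining the three steps gives the equivalence. The only step carrying genuine content is the four-case analysis of the transitions, but this is essentially already packaged in Theorems \ref{TheoremA} and \ref{TheoremB} and the preliminary discussion, so I anticipate no real obstacle.
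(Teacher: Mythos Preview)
Your overall strategy is sound, but the case analysis contains an error. In the type~$1\to$ type~$2$ transition you assert that $m_i=p$ and $\overline m_i=1$, citing the discussion before the Proposition. That passage does not say this; it only notes that $m_{r+1}=p$ is the one value making $\overline m_{r+1}=1$ (so that $R_r\to R_{r+1}$ fails to be a standard sequence). By Theorem~\ref{TheoremA} the transition type~$1\to$ type~$2$ occurs precisely when $\sigma=\gcd(m_i,pq_i)=p$, i.e.\ when $p\mid m_i$, and then $\overline m_i=m_i/p$; if $p^2\mid m_i$ this gives $p\mid\overline m_i$. Hence your biconditional ``$p\mid\overline m_i$ if and only if the transition at $i$ is type~$2\to$ type~$1$'' is false as stated. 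The fix is painless: with the corrected analysis, $p\mid\overline m_i$ still occurs only at a switch (either type~$2\to$ type~$1$, always, or type~$1\to$ type~$2$ with $p^2\mid m_i$), so ``$p\mid\overline m_i$ infinitely often'' still forces infinitely many switches, and your first and third steps go through unchanged.

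Your route is somewhat different from the paper's. The paper first invokes the equivalence of $p$-divisibility of $\nu K$ and of $\omega L$ (from \cite[Lemma~7.32]{CP}) and then argues each direction by contradiction, tracking both $m_i$ and $\overline m_i$: when the groups are not $p$-divisible, eventually $p\nmid m_r$ and $p\nmid\overline m_r$, which blocks any switch; when they are $p$-divisible, an eventually constant type is shown to be impossible by exhibiting a forced switch. Your argument, by contrast, works entirely with the $\overline m_i$ on the $K$-side and avoids that auxiliary lemma; once the case analysis is corrected it is a bit more streamlined.
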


\begin{proof} We have that  $\omega L$ is $p$-divisible if and only if $\nu K$ is $p$-divisible (for instance by (3) of \cite[Lemma 7.32]{CP}).

Suppose that $\nu K$ is not $p$-divisible. Then there exists $r_0$ such that for $r>r_0$ we have that $p\not| \overline m_r$ and $p\not| m_r$. Now if $r>r_0$ and $R_r\rightarrow S_r$ is of type  1 then $R_{r+1}\rightarrow S_{r+1}$ must be of type 1 since $\sigma_{r+1}=p$ in Theorem \ref{TheoremA} implies $p$ divides $m_{r+1}$. Further, if $R_r\rightarrow S_r$ is of type 2 then $R_{r+1}\rightarrow S_{r+1}$ must be of type 2 since $\sigma_{r+1}=1$ in Theorem \ref{TheoremB} implies $p$ divides $\overline m_{r+1}$. Thus for $r>r_0$ there can be no switching. 

Suppose that $\nu K$ is $p$-divisible. Suppose that (\ref{eq30})  doesn't switch infinitely often. Then there exists $r_0$ such that for $r\ge r_0$ $R_r \rightarrow S_r$ is of the same type as $R_{r_0}\rightarrow S_{r_0}$. 
Suppose that 
$R_{r_0}\rightarrow S_{r_0}$ is of type 1. Since $\omega L$ is $p$-divisible, there exists $r\ge r_0$ such that $p$ divides $m_{r+1}$. 
But then $R_{r+1} \rightarrow S_{r+1}$ must be of type 2 since $\sigma_{r+1}=p$ in Theorem \ref{TheoremA}.
Suppose that 
$R_{r_0}\rightarrow S_{r_0}$ is of type 2. Since $\nu L$ is $p$-divisible, there exists $r\ge r_0$ such that $p$ divides $ m_{r+1}$. Now $p$ divides $m_{r+1}$ implies $p\not| q_{r+1}$ which implies $\sigma_{r+1}=\mbox{gcd}(pm_{r+1},q_{r+1})=1$ in Theorem \ref{TheoremB}.
But then $R_{r+1} \rightarrow S_{r+1}$ must be of type 1.
\end{proof}

\begin{Remark} Suppose that a sequence (\ref{eq30})has stable complexity $p$. If $\nu K$ is not $p$-divisible then  $m_r=\overline m_r$ for $r\gg 0$ and we have that 
$m_r>1$ and $\overline m_r>1$ for $r\gg 0$ in (\ref{eq30}).
\end{Remark}

As the following Proposition shows, the nicest form that a sequence (\ref{eq30}) can take is when $R_r\rightarrow S_r$ is of  type 2 for all $r\gg 0$. This is the strongly monomial form (defined in the introduction).

\begin{Proposition}\label{PropositionSB}
  The following are equivalent for a sequence (\ref{eq30}) with stable complexity $p$, and  valuations $\nu$ and $\omega$ which it determines.
\begin{enumerate}
\item[1)] There exists an $r_0$ such that $R_r\rightarrow S_r$ is of type 2 in (\ref{eq30}) for $r\ge r_0$. 
\item[2)] $[\omega L:\omega K]=p$.
\item[3)] The valued extension $L/K$ is defectless.
\end{enumerate}
\end{Proposition}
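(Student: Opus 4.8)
The plan is to describe the value groups $\omega L$ and $\nu K$ explicitly from the combinatorial data of the diagram (\ref{eq30}), read off the ramification index $e(\omega/\nu)=[\omega L:\nu K]$, and then invoke Remark \ref{RemarkAS} to link $e(\omega/\nu)$ with defect. The equivalence (2)$\iff$(3) is essentially a citation: since (\ref{eq30}) has stable complexity $p$, Remark \ref{RemarkAS} gives that $\nu$ has a unique extension $\omega$ to $L$ and that exactly one of $e(\omega/\nu),\delta(\omega/\nu)$ equals $p$, the other being $1$; as $\omega K=\nu K$, condition (2) reads $e(\omega/\nu)=p$, which by this dichotomy is equivalent to $\delta(\omega/\nu)=1$, i.e. to $L/K$ being defectless. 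So the real content is (1)$\iff$(2).

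For that, normalize $\omega$ so that $\omega(x_0)=1$ and put $N_i=m_1m_2\cdots m_i$. In each type $\overline x_i$ is a unit of $S_i$ times $x_i$, and in (\ref{eqN1}) the series $y_{i+1}+\alpha_{i+1}$ is a unit, so $\omega(x_i)=1/N_i$; since every element of $S_i$ becomes a monomial in $x_i$ after further blowing up along $\omega$, this gives $\omega L=\bigcup_i\frac1{N_i}\ZZ$. On the $R$-side, (\ref{eqN2}) yields $\nu(u_i)=\nu(u_0)/(\overline m_1\cdots\overline m_i)$. Put $a_i=1$ if $R_i\to S_i$ is of type 1 and $a_i=p$ if it is of type 2; then $u_i$ is a unit of $S_i$ times $x_i^{a_i}$, so $\nu(u_i)=a_i/N_i$. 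Comparing the two expressions for $\nu(u_i)$ reduces to the identity $m_{i+1}/\overline m_{i+1}=a_{i+1}/a_i$, which is exactly the relation between $m$, $\overline m$ and $\sigma$ that Theorems \ref{TheoremA} and \ref{TheoremB} record in each of the four possible transitions. Hence
$$\omega L=\bigcup_i\tfrac1{N_i}\ZZ,\qquad \nu K=\bigcup_i\tfrac{a_i}{N_i}\ZZ\subseteq\omega L,$$
and $[\omega L:\nu K]=e(\omega/\nu)\in\{1,p\}$ is controlled by the eventual values of $a_i$.

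Since (\ref{eq30}) has stable complexity $p$, each $R_i\to S_i$ is of type 1 or of type 2, so exactly one of three cases occurs. If $R_i\to S_i$ is of type 1 for all $i\gg 0$, then $a_i=1$ eventually, whence $\nu K=\omega L$ and $e(\omega/\nu)=1$, so (2) fails. If (\ref{eq30}) switches infinitely often, then $\nu K$ is $p$-divisible by Proposition \ref{Prop2}; a $p$-divisible subgroup of index $1$ or $p$ in the torsion-free group $\omega L$ must equal $\omega L$, so again $e(\omega/\nu)=1$ and (2) fails. In the remaining case, $R_i\to S_i$ is of type 2 for all $i\ge r_0$, which is precisely condition (1). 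For each $i\ge r_0$ both $R_i\to S_i$ and $R_{i+1}\to S_{i+1}$ are of type 2, so in Theorem \ref{TheoremB} one has $\sigma=\gcd(pm_{i+1},q_{i+1})=p$, hence $p\mid q_{i+1}$ and therefore $p\nmid m_{i+1}$ (because $\gcd(m_{i+1},q_{i+1})=1$). Thus $N_j/N_i=m_{i+1}\cdots m_j$ is prime to $p$ whenever $r_0\le i\le j$, which forces $\tfrac1{N_i}\ZZ\cap\nu K=\tfrac{p}{N_i}\ZZ$ for $i\ge r_0$; consequently $\omega L/\nu K$ is the increasing union of the groups $\bigl(\tfrac1{N_i}\ZZ+\nu K\bigr)/\nu K\cong\tfrac1{N_i}\ZZ\big/\tfrac{p}{N_i}\ZZ$, which for $i\ge r_0$ are cyclic of order $p$ with the connecting maps isomorphisms (the generator $\tfrac1{N_i}=\tfrac{m_{i+1}}{N_{i+1}}$ maps to a unit of $\ZZ/p\ZZ$ since $p\nmid m_{i+1}$). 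Hence $[\omega L:\nu K]=p$ and (2) holds. This proves (1)$\iff$(2), and together with (2)$\iff$(3) the proposition follows.

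The step I expect to be the main obstacle is the passage carried out in the second paragraph: deriving the clean descriptions $\omega L=\bigcup\frac1{N_i}\ZZ$ and $\nu K=\bigcup\frac{a_i}{N_i}\ZZ$ from the admissible forms and the blow-up equations (\ref{eqN1})--(\ref{eqN2}), that is, keeping the bookkeeping of $a_i$, $m_i$ and $\overline m_i$ straight as the type alternates. Once this dictionary is in place the remainder is elementary arithmetic of subgroups of $\QQ$, together with Proposition \ref{Prop2} (for the switching case) and Remark \ref{RemarkAS}.
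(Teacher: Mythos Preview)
Your proof is correct and follows essentially the same route as the paper: the equivalence (2)$\iff$(3) via Remark~\ref{RemarkAS}, and then a three-case analysis (eventually type~1, eventually type~2, infinite switching) using the value-group formulas already displayed in Section~\ref{SecSwitch} together with Proposition~\ref{Prop2}. The only cosmetic differences are that the paper, in the eventually-type-2 case, observes directly that $m_i=\overline m_i$ (Theorem~\ref{TheoremB}) to get $\nu K=p\,\omega L$ and then invokes Proposition~\ref{Prop2} to rule out $p$-divisibility, whereas you extract $p\nmid m_{i+1}$ from $\sigma=p$ and argue via the cyclic quotients; and in the switching case the paper uses $\omega L=p\,\omega L\subset\nu K\subset\omega L$ while you use the (equivalent) observation that a $p$-divisible subgroup of index $p$ in a torsion-free group is impossible.
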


\begin{proof} Since the  stable complexity of the sequence (\ref{eq30})  is $p$, we have that 
$$
p=[\omega L:\omega K]\delta(\omega/\nu)
$$
 by Remark \ref{RemarkAS} and Proposition \ref{Prop2*}. Thus statement 2) is equivalent to statement 3). We now prove that statement 1) is equivalent to statement 2). Suppose that there exists $r_0$ such that $R_r\rightarrow S_r$ is of type 2 in (\ref{eq30}) for $r\ge r_0$. Then $u_i=\gamma_i\overline x_i^p$ for all $i\ge r_0$ and $m_i=\overline m_i$ for $i\ge r_0$ by Theorem \ref{TheoremB}.
Thus 
$$
\nu K=\cup_{i=r_0+1}^{\infty}\frac{1}{\overline m_{r_0+1}\cdots \overline m_i}\ZZ\nu(u_{r_0})
=\cup_{i=r_0+1}^{\infty}\frac{1}{m_{r_0+1}\cdots m_i}\ZZ p\omega(x_{r_0})=p\omega L.
$$
If $\nu K=\omega L$ then $p\omega L=\omega L$ which implies that $\omega L$ is $p$-divisible, a contradiction to Proposition \ref{Prop2}. Thus $[\omega L:\nu K]=p$. 

Suppose that there exists $r_0$ such that $R_r\rightarrow S_r$ is of type 1 in (\ref{eq30}) for $r\ge r_0$. Then $u_i=x_i$ for $i\ge r_0$. Thus
$$
\nu K=\cup_{i=r_0}^{\infty}\ZZ\nu(u_i)=\cup_{i=r_0}^{\infty}\ZZ\omega(x_i)=\omega L.
$$
Finally, suppose that (\ref{eq30}) switches infinitely often. Then $\nu K$ and $\nu L$ are $p$-divisible by Proposition \ref{Prop2}. Since $[L:K]=p$, $\omega L= p\omega L\subset \nu K\subset \omega L$ which implies that $\nu K=\omega L$.

\end{proof}

 

We see that if  the sequence (\ref{eq30}) switches infinitely often then the  extension must be a defect extension. 
Any configuration of switching is possible. A sequence with prescribed switching can be created by iterating the constructions of Theorems \ref{TheoremA} and \ref{TheoremB}.

If a sequence stabilizes with $R_r\rightarrow S_r$ of type 2 for all $r\ge r_0$, then 
from iteration of formula 2) of Theorem \ref{TheoremB}, for all $s>0$ we have that
$$
\left(\frac{c_{r_0+s}}{p-1}\right)\frac{1}{m_1\cdots\cdot m_{r_0+s}}
=\left(\frac{c_{r_0}}{p-1}\right)\frac{1}{m_1\cdots m_{r_0}}-\frac{1}{m_1\cdots m_{r_0}}+\frac{1}{m_1\cdots+m_{r_0+s}}.
$$
By Remarks \ref{RemarkA} and \ref{RemarkB}, $\omega(J(S_i/R_i))=c_i\omega(x_i)$ is monotonically decreasing with $i$. We calculate the for $s\ge 1$,
$$
\omega(J(S_{r_0+s}/R_{r_0+s}))=\left[\frac{c_{r_0}}{m_1\cdots m_{r_0}}-\frac{(p-1)}{m_1\cdots m_{r_0}}+\frac{p-1}{m_1\cdots m_{r_0+s}}\right]\omega(x_0).
$$
Thus since infinitely many $m_i$ are greater than 1,
$$
\inf\{\omega(S_i/R_i)\}= \left[\frac{c_{r_0}}{m_1\cdots m_{r_0}}-\frac{(p-1)}{m_1\cdots m_{r_0}}\right]\omega(x_0)\in \omega(L).
$$ 
Thus by Proposition \ref{Prop100},
$$
{\rm dist}(\omega/\nu)=-\frac{1}{p-1}\inf\{\omega(J(S_i/R_i))\}\in \frac{1}{p-1}\omega(L).
$$

In contrast, we can get any nonnegative  real number as the distance ${\rm dist}(\omega/\nu)$ on  $K$ if we allow a sequence which does not stabilize to type 2, as is shown in the following example.

\begin{Theorem}\label{Example1} 
Suppose that $K$ is an algebraic function field of transcendence degree 2 over an algebraically closed field $k$ of characteristic $p>0$, and that $A$ is an algebraic  regular local ring of $K$ with regular parameters $z$ and $w$. Let $\alpha\in \RR_{\ge 0}$ and let $\Phi:\NN\rightarrow \{1,2\}$ be a function such that $\Phi(n)$ is not identically equal to 2 for $n\gg 0$ . Then there exists an Artin-Schreier extension $K\rightarrow L$ and a sequence (\ref{eq30}) such that $R_0=A$,
$R_r\rightarrow S_r$ is of type 1 if $\Phi(r)=1$ and of type 2 if $\Phi(r)=2$
and the induced defect extension of valuations satisfies
$$
{\rm dist}(\omega/\nu)=-\alpha,
$$
 where the valuation $\nu$ of $L$ is normalized so that $\nu(z)=1$.  We will further have that $m_i>1$ and $\overline m_i>1$ for all $i$ in the sequence (\ref{eq30}).
\end{Theorem}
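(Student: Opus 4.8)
The plan is to construct $L$ and the sequence (\ref{eq30}) by an induction on $i$, using Theorems \ref{TheoremA} and \ref{TheoremB} to pass from $R_i\to S_i$ to $R_{i+1}\to S_{i+1}$, and Proposition \ref{Prop100} to read off the distance at the end. Write $J(S_i/R_i)=(x_i^{c_i})$, $\omega_i:=\omega(x_i)$, and $D_i:=\frac{c_i}{p-1}\omega_i$; by Proposition \ref{Prop100} we will have ${\rm dist}(\omega/\nu)=-\frac1{p-1}\inf_i\omega(J(S_i/R_i))=-\inf_i D_i$, so it suffices to arrange that the $D_i$ strictly decrease with $\inf_i D_i=\alpha$. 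For the base case one first exhibits an Artin--Schreier extension $K\to L$ and a regular algebraic local ring $S_0$ of $L$ dominating $R_0:=A$ so that $R_0\to S_0$ is well prepared of type $\Phi(0)$, with $u_0=z$, $v_0=w$ admissible and $J(S_0/R_0)=(x_0^{\overline c_0})$ for a prescribed $\overline c_0$; this is a direct finite construction in which $\overline c_0$ can be taken as large as we wish (subject only to the divisibility conditions that arise in normalizing an additive polynomial to Artin--Schreier form). Normalizing $\nu(z)=1$ forces $\omega_0=1$ if $\Phi(0)=1$ and $\omega_0=1/p$ if $\Phi(0)=2$; choose $\overline c_0$ so large that $D_0=\frac{\overline c_0}{p-1}\omega_0$ exceeds $\alpha$ by more than $2\omega_0$.

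For the inductive step, given $R_i\to S_i$ of type $\Phi(i)$ with $J(S_i/R_i)=(x_i^{c_i})$, $D_i>\alpha$, and $c_i$ not too small, make the change of variables of Theorem \ref{TheoremA} (if $\Phi(i)=1$) or \ref{TheoremB} (if $\Phi(i)=2$), and then choose relatively prime positive integers $m_{i+1}>1$ and $q_{i+1}$ subject to: $p\nmid m_{i+1}$ versus $p\mid m_{i+1}$ when $\Phi(i)=1$, and $p\nmid q_{i+1}$ versus $p\mid q_{i+1}$ when $\Phi(i)=2$, according as $\Phi(i+1)$ is $1$ or $2$; when $\Phi(i)=1$ also take $q_{i+1}/m_{i+1}<c_i/(p-1)$ (avoiding type $0$) and $m_{i+1}>p$ in a $1\to2$ step (so $\overline m_{i+1}>1$). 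Then Theorems \ref{TheoremA} and \ref{TheoremB} produce quadratic transform sequences $S_i\to S_{i+1}$, $R_i\to R_{i+1}$ with $R_{i+1}\to S_{i+1}$ of type $\Phi(i+1)$, $J(S_{i+1}/R_{i+1})=(x_{i+1}^{c_{i+1}})$, $\omega_{i+1}=\omega_i/m_{i+1}$, and, by Remarks \ref{RemarkA} and \ref{RemarkB},
$$
D_i-D_{i+1}=
\begin{cases}
\frac{q_{i+1}}{m_{i+1}}\,\omega_i & \text{if }(\Phi(i),\Phi(i+1))=(1,1),\\
\frac{q_{i+1}-1}{m_{i+1}}\,\omega_i & \text{if }(\Phi(i),\Phi(i+1))=(1,2),\\
\omega_i & \text{if }(\Phi(i),\Phi(i+1))=(2,1),\\
\frac{m_{i+1}-1}{m_{i+1}}\,\omega_i & \text{if }(\Phi(i),\Phi(i+1))=(2,2).
\end{cases}
$$
All four decrements are positive, and the Jacobian formulas give $c_{i+1}\ge1$ — this last needing $c_i\ge p$ in a $2\to1$ step, which is automatic for large $i$ when $\alpha>0$ (there $c_i=(p-1)D_i/\omega_i>(p-1)\alpha/\omega_i\to\infty$) and is kept in force by hand when $\alpha=0$ by choosing the $m_j$ large on a suitable subsequence so that $c_j$ stays large.

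It remains to choose the $m_{i+1},q_{i+1}$ so that $D_i\downarrow\alpha$. Of the four cases, the $1\to1$ decrement is freely adjustable (any rational $q_{i+1}/m_{i+1}$, in lowest terms with denominator $>1$ prime to $p$, and $<c_i/(p-1)$), the $1\to2$ one is adjustable through $q_{i+1}$ (down to $0$ at $q_{i+1}=1$), while the $2\to1$ decrement is forced to equal $\omega_i=\omega_0/(m_1\cdots m_i)$ and the $2\to2$ one lies in $[\tfrac12\omega_i,\omega_i)$. Since $\Phi$ is not eventually $2$, there are infinitely many $i$ with $\Phi(i)=1$, i.e. infinitely many adjustable steps. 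I would run the induction so that at every stage the sum of the remaining forced decrements stays below the remaining budget $D_i-\alpha$ — possible since those decrements are $\le\omega_i$ and $\sum_{j\ge i}\omega_j$ is as small as one likes once the earlier $m_j$ are chosen large — and so that on the type $1$ steps the adjustable decrements are chosen greedily with $\sum_{i}(D_i-D_{i+1})=D_0-\alpha$ exactly, which is possible because at a $1\to1$ step $D_{i+1}$ may be taken to be any rational multiple of $\omega_0$ in $(\alpha,D_i)$, and such values accumulate at $D_i$ from below, so the running sum can be steered to the (possibly irrational) target $D_0-\alpha$ while keeping every $D_i>\alpha$. Then $\bigcup_i R_i$ and $\bigcup_i S_i$ are valuation rings of $\nu,\omega$ with $\omega|_K=\nu$, $\omega$ nondiscrete of rational rank $1$ (value group $\bigcup_i\frac1{m_1\cdots m_i}\ZZ\,\omega_0$, dense since $m_1\cdots m_i\to\infty$); each $R_i\to S_i$ has complexity $p$, whence $[L:K]=p$ and, since these maps are never eventually of type $2$, $L/K$ is a defect extension by Proposition \ref{PropositionSB}; we have $m_i>1$ and $\overline m_i>1$ throughout, and ${\rm dist}(\omega/\nu)=-\inf_i D_i=-\alpha$ by Proposition \ref{Prop100}.

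The main obstacle is exactly the simultaneous bookkeeping just described: the choices of $m_{i+1},q_{i+1}$ must at once realize the prescribed type $\Phi(i+1)$ (a $p$-divisibility condition on $m_{i+1}$ or $q_{i+1}$), keep $m_i,\overline m_i>1$ and $c_i\ge1$ (lower bounds on $m_{i+1}$) together with the type $0$ avoidance inequality, and make $\sum_i(D_i-D_{i+1})$ converge to exactly $D_0-\alpha$ — delicate precisely because the $2\to1$ decrements (and in part the $2\to2$ ones) are not at our disposal. The resolution is that those forced decrements are summable and can be made arbitrarily small by shrinking the $\omega_i$, leaving a positive tunable budget at every stage, which the infinitely many type $1$ steps then exhaust exactly. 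A secondary point requiring care is the base case: producing the initial Artin--Schreier extension over $A$ of the prescribed type $\Phi(0)$ with an arbitrarily large Jacobian exponent $\overline c_0$.
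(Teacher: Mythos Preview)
Your approach is essentially correct and follows the same overall architecture as the paper (build the Artin--Schreier extension, induct via Theorems \ref{TheoremA} and \ref{TheoremB}, read off the distance from Proposition \ref{Prop100}), but you miss a telescoping identity that makes the bookkeeping much simpler. From your own decrement formulas one computes that if $\Phi(r)=1$, $\Phi(r+1)=\cdots=\Phi(r+s)=2$, $\Phi(r+s+1)=1$, then
\[
D_{r+j}=D_r-\frac{q_{r+1}}{m_{r+1}}\omega_r+\omega_{r+j}\quad(1\le j\le s),\qquad
D_{r+s+1}=D_r-\frac{q_{r+1}}{m_{r+1}}\omega_r,
\]
so the net change of $D$ over an entire type~2 excursion is governed \emph{only} by the single adjustable ratio $q_{r+1}/m_{r+1}$ chosen at its first step, and every intermediate $D_{r+j}$ automatically exceeds $D_{r+s+1}$. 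The paper (equations (\ref{eq60}), (\ref{eq61})) exploits exactly this: one only needs to control $D_t$ at the type~1 indices $t$, arranging $\alpha<D_t<\alpha+2^{-t}$ there by density, and the infimum over all $i$ then equals the infimum over type~1 indices. This replaces your two-tier scheme of bounding ``forced'' decrements by $\sum_{j\ge i}\omega_j$ and then steering with the adjustable ones; your scheme works, but the telescoping removes the need for any forward planning about upcoming type~2 runs, and also handles the $\alpha=0$ case without the extra ``keep $c_j$ large by hand'' clause.

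Two smaller points. First, the paper does not attempt a direct base construction when $\Phi(0)=2$; instead it augments the sequence by a step $R_{-1}\to S_{-1}$ of type~1 (extending $\Phi$ by $\Phi(-1)=1$), applies the type~1 base construction there, and then forgets the $(-1)$-level. Your claim that a type~2 start ``is a direct finite construction'' is true (e.g.\ $u=x^p/(1-x^{p-1})$ works), but you should either write it down or use the paper's augmentation trick. Second, your assertion that all four decrements are strictly positive conflicts with your own later remark that the $(1,2)$ decrement vanishes at $q_{i+1}=1$; this is harmless for the argument (the $D_i$ need only be eventually strictly decreasing along type~1 indices), but the wording should be fixed.
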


\begin{proof} 
First assume that $\Phi(0)=1$.
Let $R_0=A$, $u_0=z$ and $v_0=w$.
Let $e$ be a positive integer  such that $e>\alpha$. Let $c_0=(p-1)e$. Let $\Theta$ be a root of the Artin-Schreier polynomial $X^p-X-v_0u_0^{-pe}$. Let $L=K(\Theta)$. Set $x_0=u_0$, $y_0=u_0^e\Theta$. Let $S_0=R[y_0]_{(x_0,y_0)}$, which is an algebraic regular local ring of $L$ which dominates $R_0$. The regular parameters $x_0,y_0$ in $S_0$ satisfy $u_0=x_0, v_0=y_0^p-x_0^{e(p-1)}y_0$, so that the extension $R\rightarrow S$ is of type 1. We have that $J(S_0/R_0)=(x_0^{c_0})$, with
$\frac{c_0}{p-1}>\alpha$.

Suppose that we have a sequence 
$$
\begin{array}{ccc}
S_r&\rightarrow &S_{r+1}\\
\uparrow&&\uparrow\\
R_r&\rightarrow & R_{r+1}
\end{array}
$$
where $R_r\rightarrow S_{r}$ and $R_{r+1}\rightarrow S_{r+1}$ are both of type 1. Then
from Theorem \ref{TheoremA}, we have that 
\begin{equation}\label{eq60}
\left(\frac{c_{r+1}}{p-1}\right)\frac{1}{m_1\cdots m_{r+1}}=\left(\frac{c_r}{p-1}\right)\frac{1}{m_1\cdots m_{r}}
-\frac{q_{r+1}}{m_{r+1}}\left(\frac{1}{m_1\cdots m_{r}}\right).
\end{equation}

Suppose that we have a sequence
$$
\begin{array}{ccccccccc}
S_r&\rightarrow &S_{r+1}&\rightarrow &\cdots&\rightarrow &S_{r+s}&\rightarrow &S_{r+s+1}\\
\uparrow &&\uparrow&&&&\uparrow&&\uparrow\\
R_r&\rightarrow &R_{r+1}&\rightarrow &\cdots&\rightarrow &R_{r+s}&\rightarrow &R_{r+s+1}\end{array}
$$
where $s\ge 1$, $R_i\rightarrow S_i$ is of type 1 if $i=r$ or $i=r+s+1$ and $R_i\rightarrow S_i$ is of type 2 if $r+1\le i\le r+s$. Then from Theorems \ref{TheoremA} and \ref{TheoremB}, we have that
\begin{equation}\label{eq61}
\left(\frac{c_{r+s+1}}{p-1}\right)\frac{1}{m_1\cdots m_{r+s+1}}
=\left(\frac{c_r}{p-1}\right)\frac{1}{m_1\cdots m_{r}}-\frac{q_{r+1}}{m_{r+1}}\left(\frac{1}{m_1\cdots m_{r}}\right).
\end{equation}

Let $p'$ be a prime distinct from $p$.

We now inductively construct the sequence (\ref{eq30}), so that $m_i>1$ and $\overline m_i>1$ for all $i$. Suppose that the sequence has been constructed up to $R_r\rightarrow S_r$, $\Phi(r)=1$  and  we have that  for all $t\le r$ such that $\Phi(t)=1$,
\begin{equation}\label{eq66}
\alpha<\left(\frac{c_t}{p-1}\right)\frac{1}{m_1\cdots m_{t}}\mbox{ and }
\left(\frac{c_t}{p-1}\right)\frac{1}{m_1\cdots m_{t}}<\alpha+\frac{1}{2^{t}}\mbox{ if }t>0.
\end{equation}
  
First suppose that  $\Phi(r+1)=1$. 
There exists $\lambda(r+1)\in \ZZ_+$ such that there exits $q_{r+1}\in \ZZ_+$ such that $\mbox{gcd}(q_{r+1},p')=1$ and
\begin{equation}\label{eq62}
\frac{c_r}{p-1}-\alpha m_1\cdots m_{r}
>\frac{q_{r+1}}{(p')^{\lambda(r+1)}}>\frac{c_r}{p-1}-\left(\alpha+\frac{1}{2^{r+1}}\right)m_1\cdots m_{r}.
\end{equation}
Set $m_{r+1}=(p')^{\lambda(r+1)}$. Then
\begin{equation}\label{eq63}
\alpha+\frac{1}{2^{r+1}}>\left(\frac{c_r}{p-1}\right)\frac{1}{m_1\cdots m_{r}} -\left(\frac{q_{r+1}}{m_{r+1}}\right)\frac{1}{m_1\cdots m_{r}}>\alpha.
\end{equation}

Now we have that $\frac{q_{r+1}}{m_{r+1}}<\frac{c_r}{p-1}$ with $\mbox{gcd}(m_{r+1},pq_{r+1})=1$ so we may define from Theorem \ref{TheoremA} and the above values of $q_{r+1}$ and $m_{r+1}$ a commutative diagram
$$
\begin{array}{ccc}
S_r&\rightarrow &S_{r+1}\\
\uparrow&&\uparrow\\
R_r&\rightarrow & R_{r+1}
\end{array}
$$
such that $R_{r+1}\rightarrow S_{r+1}$ is of type 1. Further, (\ref{eq66}) holds for $t=r+1$ by (\ref{eq60}) and (\ref{eq63}).

Now suppose that  $\Phi(r+1)=2$. Let $s\ge 1$ be the smallest integer such that $\Phi(r+s+1)=1$. 
There exists an integer $\lambda(r+1)>1$ such that there exits $q_{r+1}\in \ZZ_+$ such that $\mbox{gcd}(q_{r+1},p)=1$ and
\begin{equation}\label{eq64}
\frac{c_r}{p-1}-\alpha m_1\cdots m_{r}
>\frac{q_{r+1}}{p^{\lambda(r+1)}}>\frac{c_r}{p-1}-\left(\alpha+\frac{1}{2^{r+1}}\right)m_1\cdots m_{r}.
\end{equation}
Set $m_{r+1}=p^{\lambda(r+1)}$. Then
\begin{equation}\label{eq65}
\alpha+\frac{1}{2^{r+1}}>\left(\frac{c_r}{p-1}\right)\frac{1}{m_1\cdots m_{r}} -\left(\frac{q_{r+1}}{m_{r+1}}\right)\frac{1}{m_1\cdots m_{r}}>\alpha.
\end{equation}

We have that $\frac{q_{r+1}}{m_{r+1}}<\frac{c_r}{p-1}$ with $\mbox{gcd}(m_{r+1},pq_{r+1})=p$ so we may define from Theorem \ref{TheoremA} and the above values of $q_{r+1}$ and $m_{r+1}$ a commutative diagram
$$
\begin{array}{ccc}
S_r&\rightarrow &S_{r+1}\\
\uparrow&&\uparrow\\
R_r&\rightarrow & R_{r+1}
\end{array}
$$
such that $R_{r+1}\rightarrow S_{r+1}$ is of type 2. We have $\sigma=p$ in Theorem \ref{TheoremA} and $\overline m_{r+1}=\frac{m_{r+1}}{\sigma}>1$.
For $r+1\le i\le r+s$ define
$$
\begin{array}{ccc}
S_i&\rightarrow &S_{i+1}\\
\uparrow&&\uparrow\\
R_i&\rightarrow & R_{i+1}
\end{array}
$$
from Theorem \ref{TheoremB} by taking $m_{i+1}=(p')^2$ and $q_{i+1}=p^2$ if $i<s$ and taking $m_{i+1}=p^2$ and $q_{i+1}=(p')^2$ if $i=s$. From Theorem \ref{TheoremB} we have a commutative diagram
$$
\begin{array}{ccccccccc}
S_r&\rightarrow &S_{r+1}&\rightarrow &\cdots&\rightarrow &S_{r+s}&\rightarrow &S_{r+s+1}\\
\uparrow &&\uparrow&&&&\uparrow&&\uparrow\\
R_r&\rightarrow &R_{r+1}&\rightarrow &\cdots&\rightarrow &R_{r+s}&\rightarrow &R_{r+s+1}\end{array}
$$
such that $R_i\rightarrow S_i$ is of type 1 if $i=r$ or $i=r+s+1$ and $R_i\rightarrow S_i$ is of type 2 if $r+1\le i\le r+s$. Further, (\ref{eq66}) is satisfied with $t=r+s+1$ by (\ref{eq61}) and (\ref{eq65}). We saw above that $m_{r+1}>1$ and $\overline m_{r+1}>1$.
If $r+1<i\le r+s$, we have $\sigma=p$ in Theorem \ref{TheoremB} so that $\overline m_i=\frac{pm_i}{\sigma}=m_i>1$. If $i=r+s+1$, then $\sigma=1$ in Theorem \ref{TheoremB} and $\overline m_{r+s+1}=pm_{r+s+1}>1$. Thus $m_i>1$ and $\overline m_i>1$ for $r\le i\le r+s+1$.

Now by Proposition \ref{Prop100}, we have that
$$
-{\rm dist}(\omega/\nu)=\frac{1}{p-1}\inf_i\{\omega(J(S_i/R_i))\}=\alpha\omega(x_0)=\alpha.
$$

Now suppose that $\Phi(0)=2$. Using the construction of the above case (when $\Phi(0)=1$), we can in this case construct an augmented sequence 
\begin{equation}\label{eq80}
\begin{array}{ccccccccc}
B&\rightarrow &S_{-1}&\rightarrow &S_0&\rightarrow & S_1&\rightarrow & \cdots\\
\uparrow&&\uparrow&&\uparrow&&\uparrow\\
A&\rightarrow &R_{-1}&\rightarrow &R_0&\rightarrow & R_1&\rightarrow & \cdots
\end{array}
\end{equation}
where $\Phi$ is extended to the set $\{-1,0,1,2,\ldots\}$ by defining $\Phi(-1)=1$, and such that the conclusions of the theorem hold for this augmented sequence. We then get the statement of the theorem by forgetting the map $R_{-1}\rightarrow S_{-1}$.
\end{proof}

\begin{Remark}\label{RemarkEX} In the construction of the sequence (\ref{eq30}) in Theorem \ref{Example1}, we have $m_i>1$ and $\overline m_i>1$ for all $i$, so that $\omega( \overline y_i)\not\in \omega(\overline x_i)\ZZ$ for all $i$ and $\omega(\overline v_i)\not\in \omega(u_i)\ZZ$ for all $i$. Thus $R_0\rightarrow R_1\rightarrow R_2\rightarrow \cdots$ is the sequence of sequences of standard quadratic transforms along $\nu$ and $S_0\rightarrow S_1\rightarrow S_2\rightarrow \cdots$ is the sequence of sequences of standard quadratic transforms along $\omega$.
\end{Remark}

\section{An example of a tower of independent defect extensions in which strong local monomialization doesn't hold}

\begin{Theorem}\label{Example3} There exists a tower $(K,\nu)\rightarrow (L,\omega)\rightarrow (M,\mu)$ of independent defect Artin-Schreier extensions of valued two dimensional algebraic function fields over an algebraically closed field $k$ of  characteristic $p>0$ such that there exist algebraic regular local rings $A$ of $K$ and $C$ of $M$ such that $\mu$ dominates $C$ and $C$ dominates $A$ but strong local monomialization along $\mu$ does not hold above $A\rightarrow C$. 
\end{Theorem}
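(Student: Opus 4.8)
The plan is to build the tower by applying Theorem~\ref{Example1} twice, once for each Artin-Schreier extension, always with the parameter $\alpha$ equal to $0$, and with the two switching functions chosen to be opposite at every sufficiently large stage. Taking $\alpha=0$ makes $\mathrm{dist}(\omega/\nu)=0$ for the first extension and $\mathrm{dist}(\mu/\omega)=0$ for the second, so by the discussion of Subsection~\ref{Rank1AS} (equivalently by \eqref{eqN23}) both are \emph{independent} defect Artin-Schreier extensions; the opposite-type condition is what forces the composite $R_i\to T_i$ to remain permanently in the non-strongly-monomial form \eqref{eqN25}.

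Concretely, I would first apply Theorem~\ref{Example1} with base ring $A$, with $\alpha=0$, and with $\Phi_1\colon\NN\to\{1,2\}$ given by $\Phi_1(i)=1$ for $i$ even and $\Phi_1(i)=2$ for $i$ odd. This produces an Artin-Schreier extension $K\to L$, a diagram \eqref{eq30} with $R_0=A$ and with $R_i\to S_i$ of type $\Phi_1(i)$, and (since the construction gives $m_i>1$ and $\overline m_i>1$ for all $i$, so that Remark~\ref{RemarkEX} applies) the sequences $\{R_i\}$ and $\{S_i\}$ are the standard quadratic transform sequences along the determined valuations $\nu$ of $K$ and $\omega$ of $L$; moreover $\mathrm{dist}(\omega/\nu)=0$. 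I would then apply Theorem~\ref{Example1} a second time, with base ring $S_0$ (regarded as a regular algebraic local ring of $L$ with its regular parameters $x_0,y_0$), again with $\alpha=0$, and with $\Phi_2\colon\NN\to\{1,2\}$ chosen so that $\Phi_2(0)=1$, $\Phi_2(i)=2$ for even $i\ge 2$, and $\Phi_2(i)=1$ for odd $i$; choosing $\Phi_2(0)=1$ keeps us out of the augmentation case in the proof of Theorem~\ref{Example1} and is harmless, since only the behaviour for $i\gg 0$ will be used. This produces an Artin-Schreier extension $L\to M$, a valuation $\mu$ of $M$ with $\mu|_L=\omega$, and a diagram \eqref{eq30} whose bottom row is the standard quadratic transform sequence along $\omega$ starting from $S_0$ with respect to the height-one prime $(x_0)S_0$; by uniqueness of that standard sequence the bottom row coincides with $\{S_i\}$, so for each $i$ one has $R_i\to S_i\to T_i$ with $R_i\to S_i$ of type $\Phi_1(i)$ and $S_i\to T_i$ of type $\Phi_2(i)$, and $\mathrm{dist}(\mu/\omega)=0$. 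Set $C=T_0$; then $\mu$ dominates $C$, $C$ dominates $A$, and by the above both $L/K$ and $M/L$ are independent defect Artin-Schreier extensions.

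Next I would compute the composite and deduce the failure of strong local monomialization. For $i\ge 1$ we have $\Phi_1(i)\neq\Phi_2(i)$, so exactly one of $R_i\to S_i$, $S_i\to T_i$ is of type~$1$ and the other of type~$2$; substituting one normal form into the other (using the Frobenius identity in characteristic $p$ for the type~$1$ piece) shows, in both orders, that $R_i\to T_i$ has the form $u_i=\gamma x_i^{p}$, $v_i=y_i^p\tau+x_i\Omega$ of \eqref{eqN25} in suitable regular parameters, and is well prepared of complexity $p^2$. This equals the stable complexity $e(\mu/\nu)\delta(\mu/\nu)$ of the composite $K\to M$ by Proposition~\ref{Prop2*}, since ramification indices and defects are multiplicative in towers and $e(\mu/\nu)=1$. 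Now $\{R_i\}$ is cofinal among the quadratic transforms of $A$ along $\nu$ and $\{T_i\}$ is cofinal among those of $C$ along $\mu$, so any strong local monomialization of $A\to C$ along $\mu$ would, after blowing up further along these standard coupled sequences, be realized stably by some $R_i\to T_i$ with $i\gg 0$. But \eqref{eqN25} admits no strongly monomial form $u=\delta x^a$, $v=y$: a direct examination of the image of the maximal ideal shows that either $m_{R_i}T_i\subseteq m_{T_i}^2$ (so no regular parameter of $R_i$ can map to a regular parameter of $T_i$) or else reconciling the two required forms forces the complexity to be $1$ rather than $p^2$ — this is the same incompatibility exploited for the dependent example analyzed in Section~\ref{Example}. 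This contradiction proves the theorem.

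The step I expect to be the main obstacle is the bookkeeping that glues the two applications of Theorem~\ref{Example1} along the intermediate field $L$, namely verifying that the sequence of quadratic transforms of $L$ generated by the second construction is literally the sequence $\{S_i\}$ generated by the first. This reduces to the canonicity of standard quadratic transform sequences along a fixed valuation once the height-one prime is chosen consistently, together with checking that the changes of variables imposed by Theorems~\ref{TheoremA} and~\ref{TheoremB} in the second construction are compatible with those already in force in the first. The remaining point — that the form \eqref{eqN25}, which persists for all $i\gg 0$, can never be brought to a strongly monomial form by any further blowing up — is essentially a repetition of the argument given for the dependent tower in Section~\ref{Example}, and so should be routine here.
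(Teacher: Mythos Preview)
Your plan has a genuine gap at exactly the point you flagged as ``bookkeeping'': the two invocations of Theorem~\ref{Example1} cannot be made independently. Theorem~\ref{Example1} does not take a valuation as input; it \emph{constructs} one. In your first application the sequence $\{S_i\}$, and hence the valuation $\omega$ on $L$, is determined by the specific choices of $(m_{r+1},q_{r+1},\alpha_{r+1})$ made in its proof (via \eqref{eq62}, \eqref{eq64}), and those choices are tailored to the Jacobian exponents $c_r$ of $R_r\to S_r$. Your second application, run as a black box on $(L,S_0)$, would make its own choices, now governed by the Jacobian exponents $c_r'$ of $S_r\to T_r$, and would therefore produce a \emph{different} sequence of blowups on $L$ and a different valuation $\omega'\ne\omega$. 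Canonicity of the standard sequence along a fixed valuation does not help, because the two constructions fix two different valuations.

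This is precisely why the paper does not cite Theorem~\ref{Example1} twice but instead reopens its proof and builds the three-row diagram \eqref{eq5**} in a single interleaved pass. At even steps the parameters $(m_{r+1},q_{r+1})$ are chosen to satisfy \eqref{eq64*} (so that \eqref{eq61*}--\eqref{eq65*} drive $\mathrm{dist}(\omega/\nu)$ to $0$), while at odd steps the parameters $(m'_{r+2},q'_{r+2})$ are chosen to satisfy \eqref{eq1**} (so that \eqref{eq2**}--\eqref{eq3**} drive $\mathrm{dist}(\mu/\omega)$ to $0$); at each step the \emph{other} level is shown to follow along via Remarks~\ref{RemarkA'} and~\ref{RemarkB'}, which are new ingredients not present in Theorem~\ref{Example1}. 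That coordination, not bookkeeping, is the heart of the proof.

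A second, smaller gap: your argument that \eqref{eqN25} precludes strong monomialization is too quick. A strongly monomial $R'\to T'$ need not be one of the $R_i\to T_i$, and the strongly monomial form is not in general stable under further standard blowups in the way you suggest. The paper sandwiches $R'\to T'$ between consecutive $R_s\to T_s$ and $R_{s+1}\to T_{s+1}$, writes the intermediate blowup in the monomial form \eqref{eq108}, and rules out strong monomiality by a case analysis of \eqref{eq113}. Section~\ref{Example} does not supply this argument; it only computes distances in the example from \cite{CP}.
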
 

\begin{Remark}\label{RemarkA'} Let $\delta\in \RR_{\ge 0}$ be a fixed ratio. Suppose that $R\rightarrow S$ is of type 1. By taking $m$ and $q$ sufficiently large in Theorem \ref{TheoremA} such that   $R_1\rightarrow S_1$ is of type 2, we can achieve that $v_1=\lambda y_1+g(x_1)$ where $\lambda$ is a unit in $S_1$ and the order of $g(x_1)$ is arbitrarily large.  Suppose that $R\rightarrow S$ is of type 2. By taking $m$ and $q$ sufficiently large in Theorem \ref{TheoremB}  such that  $R_1\rightarrow S_1$ is of type 1 we can achieve that $v_1=y_1^p\gamma+x_1^{c_1}y_1\tau+f(x_1)$ where $\gamma$ and $\tau$ are unit series in $S_1$ and the order of $f(x_1)$ is arbitrarily large. In both cases, we can choose $m$ and $q$ so that $\frac{q}{m}$ is arbitrarily close to $\delta$.
\end{Remark}

\begin{Remark}\label{RemarkB'}
In Theorem \ref{TheoremB}, we have an expression $\overline v=\tau y+f(x)$ where $\tau$ is a unit in $S$. Suppose that $m$ and $q$ are positive integers with $\mbox{gcd}(m,q)=1$ and such that $\mbox{ord }f(x)>\frac{q}{m}$. Then the proof of Theorem \ref{TheoremB} extends to show that the conclusions of Theorem \ref{TheoremB} hold with $\overline y$ replaced with $y$.
\end{Remark}

We now give the proof of Theorem \ref{Example3}.
\begin{proof} 
Let $K$ be a two dimensional algebraic function field over an algebraically closed field, and let $R_{-2}$ be a two dimensional algebraic regular local ring of $K$. Let $u_{-2},v_{-2}$ be regular parameters in $R_{-2}$.


Let $e$ be a positive integer. Let $c_{-2}=(p-1)e$. Let $\Theta$ be a root of the Artin-Schreier polynomial $X^p-X-v_{-2}u_{-2}^{-pe}$. Let $L=K(\Theta)$. Set $x_{-2}=u_{-2}$, $y_{-2}=u_{-2}^e\Theta$. Let $S_{-2}=R_{-2}[y_{-2}]_{(x_{-2},y_{-2})}$, which is an algebraic regular local ring of $L$ which dominates $R_{-2}$. The regular parameters $x_{-2},y_{-2}$ in $S_{-2}$ satisfy $u_{-2}=x_{-2}, v_{-2}=y_{-2}^p-x_{-2}^{e(p-1)}y_{-2}$, so that the extension $R_{-2}\rightarrow S_{-2}$ is of type 1. We have that 
$J(S_{-2}/R_{-2})=(x_{-2}^{c_{-2}})$, with
$\frac{c_{-2}}{p-1}>0$.

We first construct a commutative diagram
$$
\begin{array}{lll}
S_{-2}&\rightarrow& S_{-1}\\
\uparrow&&\uparrow\\
R_{-2}&\rightarrow &R_{-1}
\end{array}
$$
using Theorem \ref{TheoremA} so that $R_{-1}\rightarrow S_{-1}$ is of type 2.
Let $\Sigma$ be a root of the Artin-Schreier polynomial $X^p-X-y_{-1}x_{-1}^{-pe}$. Let $M=L(\Sigma)$. Set $z_{-1}=x_{-1}$, $w_{-1}=x_{-1}^e\Sigma$. Let $T_{-1}=S_{-1}[w_{-1}]_{(z_{-1},w_{-1})}$, which is an algebraic regular local ring of $M$ which dominates $S_{-1}$. The regular parameters $z_{-1},w_{-1}$ in $T_{-1}$ satisfy $x_{-1}=z_{-1}, y_{-1}=w_{-1}^p-z_{-1}^{e(p-1)}w_{-1}$, so that the extension $S_{-1}\rightarrow T_{-1}$ is of type 1. We have that $J(T_{-1}/S_{-1})=(z_{-1}^{c_{-1}'})$, with
$\frac{c_{-1}'}{p-1}>0$.

From Theorems \ref{TheoremA} and \ref{TheoremB}, we construct
$$
\begin{array}{ccc}
T_{-1}&\rightarrow &T_0\\
\uparrow&&\uparrow\\
S_{-1}&\rightarrow &S_0\\
\uparrow&&\uparrow\\
R_{-1}&\rightarrow&R_0\\
\end{array}
$$
such that $R_0\rightarrow S_0$ is of type 1 and $S_0\rightarrow T_0$ is of type 2. Explicitely, $R_{-1},R_0, S_{-1}, S_0, T_{-1}, T_0$ have respective regular parameters $(u_{-1},v_{-1})$, $(u_0,v_0)$, $(x_{-1},y_{-1})$, $(x_0,y_0)$ and $(z_{-1}, w_{-1})$, $(z_0,w_0)$ which are related by equations
$$
\begin{array}{l}
u_{-1}=u_0^{pm_0}(v_0+\beta_0)^{d_0'}, v_{-1}=u_0^{q_0}(v_0+\beta_0)^{e_0'}\\
x_{-1}=x_0^{m_0}(y_0+\alpha_0)^{a_0'}, y_{-1}=x_0^{q_0}(w_0+\alpha_0)^{g_0'}\\
z_{-1}=z_0^{pm_0}(v_0+\gamma_0)^{f_0'}, w_{-1}=z_0^{q_0}(w_0+\gamma_0)^{g_0'}
\end{array}
$$
where $p\not| q_0$ and $\frac{q_0}{pm_0}<\frac{c_{-1}'}{p-1}$ where $J(T_{-1}/S_{-1})=(z_{-1}^{c_{-1}'})$.

By Remarks \ref{RemarkA'} and \ref{RemarkB'}, we can construct $R_0\rightarrow S_0\rightarrow T_0$ so that we have expressions $y_0=\lambda_0w_0+g_0(z_0)$ where $\lambda_0$ is a unit in $T_0$ and $\mbox{ord } g_0(z_0)$ is arbitrarily large and $v_0=\sigma_0y_0^p+\tau_0x_0^{c_0}y_0+f_0(x_0)$ where $\sigma_0,\tau_0$ are units in $S_0$ and $\mbox{ord } f_0(x_0)$ is arbitrarily large.

We will inductively construct a commutative diagram within $K\rightarrow L\rightarrow M$ of two dimensional regular algebraic local rings
\begin{equation}\label{eq5**}
\begin{array}{ccccccc}
T_0&\rightarrow &T_1&\rightarrow& T_2&\rightarrow &\cdots\\
\uparrow&&\uparrow&&\uparrow&&\\
S_0&\rightarrow&S_1&\rightarrow &S_2&\rightarrow &\cdots\\
\uparrow&&\uparrow&&\uparrow&&\\
R_0&\rightarrow&R_1&\rightarrow &R_2&\rightarrow&\cdots\\
\end{array}
\end{equation}
such that $R_i\rightarrow S_i$ is of type 1 if $i$ is even and is of type 2 if $i$ is odd, $S_i\rightarrow T_i$ is of type 2 if $i$ is even and is of type 1 if $i$ is odd. Further, valuations $\nu$, $\omega$ and $\mu$ of the respective function fields $K$, $L$ and $M$ determined by these sequences are such that $K\rightarrow L$ and $L\rightarrow M$ are independent defect extensions. 
We will have that $R_i$ has regular parameters $(u_i,v_i)$, $S_i$ has regular parameters $(x_i,y_i)$ and $T_i$ has regular parameters $(z_i,w_i)$ such that
$$
u_i=u_{i+1}^{\overline m_{i+1}}(v_{i+1}+\beta_{i+1})^{d_{i+1}'}, v_i=u_{i+1}^{\overline q_{i+1}}(v_{i+1}+\beta_{i+1})^{e_{i+1}'},
$$
$$
x_i=x_{i+1}^{m_{i+1}}(y_{i+1}+\alpha_{i+1})^{a_{i+1}'}, y_i=x_{i+1}^{q_{i+1}}(y_{i+1}+\alpha_{i+1})^{b_{i+1}'},
$$
$$
z_i=z_{i+1}^{m_{i+1}'}(w_{i+1}+\gamma_{i+1})^{f_{i+1}'}, w_i=z_{i+1}^{q_{i+1}'}(w_i+\gamma_{i+1})^{g_{i+1}'}
$$
with $\overline m_i, m_i$ and $m_i'$ larger than 1 for all $i$.

Let $J(S_i/R_i)=(x_i^{c_i})$ and $J(T_i/S_i)=(z_i^{c_i'})$.

If $i$ is even, then 
$m_{i+1}=p\overline m_{i+1}, m_{i+1}'=\overline m_{i+1},q_{i+1}=\overline q_{i+1}, q_{i+1}'=q_{i+1}$ and 
$$
\frac{q_{i+1}}{m_{i+1}}<\frac{c_i}{p-1}.
$$

If $i$ is odd, then $\overline m_{i+1}=pm_{i+1}, m_{i+1}'=\overline m_{i+1}, q_{i+1}=\overline q_{i+1}, q_{i+1}'=q_{i+1}$ and 
$$
\frac{q_{i+1}'}{m_{i+1}'}<\frac{c_i'}{p-1}.
$$

In our construction, if $r$ is even, we will have that
\begin{equation}\label{eq100*}
y_r=\lambda_rw_r+g_r(z_r)
\end{equation}
where $\lambda_r$ is a unit in $T_r$ and $\mbox{ord } g_r(z_r)$ is arbitrarily large and
\begin{equation}\label{eq101}
v_r=\sigma_ry_r^p+\tau_rx_r^{c_r}y_r+f_r(x_r)
\end{equation}
where $\sigma_r,\tau_r$ are units in $S_r$ and $\mbox{ord }f_r(x_r)$ is arbitrarily large. 
If $r$ is even, we will have
\begin{equation}\label{eq106}
y_r=\sigma_rw_r^p+\tau_rz_r^{c_r'}w_r+f(z_r)
\end{equation}
where $\sigma_r,\tau_r$ are units in $T_r$ and $\mbox{ord }f(z_r)$ is arbitrarily large and
\begin{equation}\label{eq107}
v_r=\lambda_ry_r+g_r(x_r)
\end{equation}
where $\lambda_r$ is a unit in $S_r$ and $\mbox{ord }g_r(x_r)$ is arbitrarily large.

Suppose that $r$ is even, and we have constructed $R_r\rightarrow S_r\rightarrow T_r$. We will construct
$$
\begin{array}{ccccc}
T_r&\rightarrow&T_{r+1}&\rightarrow&T_{r+2}\\
\uparrow&&\uparrow&&\uparrow\\
S_r&\rightarrow&S_{r+1}&\rightarrow&S_{r+2}\\
\uparrow&&\uparrow&&\uparrow\\
R_r&\rightarrow&R_{r+1}&\rightarrow &R_{r+2}.
\end{array}
$$
There exists an integer $\lambda(r+1)>1$ and $q_{r+1}\in \ZZ_+$ such that $\mbox{gcd}(q_{r+1},p)=1$ and
\begin{equation}\label{eq64*}
\frac{c_r}{p-1}
>\frac{q_{r+1}}{p^{\lambda(r+1)}}>\frac{c_r}{p-1}-\frac{1}{2^{r+1}}m_1\cdots m_{r}.
\end{equation}
In fact, we can find $\lambda(r+1)$ arbitrarily large satisfying the inequality.
Set $m_{r+1}=p^{\lambda(r+1)}$. 
We have that $\frac{q_{r+1}}{m_{r+1}}<\frac{c_r}{p-1}$ with $\mbox{gcd}(m_{r+1},pq_{r+1})=p$. 
This choice of $m_{r+1}$ and $q_{r+1}$ (along with a choice of $0\ne\alpha_{r+1}\in k$) determines $S_r\rightarrow S_{r+1}$. We have an expression
$v_r=\sigma_r y_r^p+\tau_r x_r^{c_r}y_r+f_r(x_r)$ where $\mbox{ord }f_r(x_r)$ is arbitrarily large. In particular, we can assume that
$\mbox{ord }f_r(x_r)>\frac{pq_{r+1}}{m_{r+1}}$. Then $R_r\rightarrow R_{r+1}$ is defined as desired by Theorem \ref{TheoremA}.
By Remark \ref{RemarkA'}, since we can take $\lambda(r+1)$ to be arbitrarily large, we can assume that $v_{r+1}=\lambda_{r+1}y_{r+1}+g_{r+1}(x_{r+1})$ where $\mbox{ord }g_{r+1}(x_{r+1})$ is arbitrarily large. 

By Remark \ref{RemarkB'} and Theorem \ref{TheoremB}, $T_r\rightarrow T_{r+1}$ is defined as desired, with $m_{r+1}'=\frac{m_{r+1}}{p}$, $q_{r+1}'=q_{r+1}$. Since we can take $\lambda(r+1)$ to be arbitrarily large, we can assume that
$y_{r+1}=\sigma_{r+1}w_{r+1}^p+\tau_{r+1}z_{r+1}^{c_{r+1}'}w_r+f_{r+1}(z_{r+1})$ where $\mbox{ord }f_{r+1}(z_{r+1})$ is arbitrarily large. 

We have defined a commutative diagram
\begin{equation}\label{eq103}
\begin{array}{ccc}
T_r&\rightarrow &T_{r+1}\\
\uparrow&&\uparrow\\
S_r&\rightarrow &S_{r+1}\\
\uparrow&&\uparrow\\
R_r&\rightarrow & R_{r+1}
\end{array}
\end{equation}
with the desired properties; in particular, $R_{r+1}\rightarrow S_{r+1}$ is of type 2 with
$$
\frac{c_{r+1}}{p-1}=\left(\frac{c_r}{p-1}\right)m_{r+1}-q_{r+1}+1
$$
and $S_{r+1}\rightarrow T_{r+1}$ is of type 1, with 
$$
\frac{c_{r+1}'}{p-1}=\frac{c_r'}{p-1}m_{r+1}'-m_{r+1}'.
$$


Now choose $q_{r+2}'$, $m_{r+2}'=p^{\lambda(r+2)}$ such that $p\not|q_{r+2}'$ and
\begin{equation}\label{eq1**}
\frac{c_{r+1}'}{p-1}>\frac{q_{r+2}'}{m_{r+2}'}>\frac{c_{r+1}'}{p-1}-\frac{1}{2^{r+2}}m_1'\cdots m_{r+1}'.
\end{equation} 
We can take $\lambda(r+2)$ arbitrarily large.
Set $m_{r+2}=\frac{m_{r+2}'}{p}=p^{\lambda(r+2)-1}$, $q_{r+2}=q_{r+2}'$. By (\ref{eq1**}),
$\frac{q_{r+2}'}{m_{r+2}'}<\frac{c_{r+1}'}{p-1}$.

Now construct, as in the construction of (\ref{eq103}), using Theorems \ref{TheoremA} and \ref{TheoremB} and Remark \ref{RemarkB'} and these values of $m_{r+2}$ and $q_{r+2}$,
$$
\begin{array}{ccc}
T_{r+1}&\rightarrow &T_{r+2}\\
\uparrow&&\uparrow\\
S_{r+1}&\rightarrow &S_{r+2}\\
\uparrow&&\uparrow\\
R_{r+1}&\rightarrow & R_{r+2},
\end{array}
$$
so that $R_{r+2}\rightarrow S_{r+2}$ is of type 1 and $S_{r+2}\rightarrow T_{r+2}$ is of type 2. By Remark \ref{RemarkA'}, we obtain expressions (\ref{eq100*}) and (\ref{eq101}) for $r+2$.

 By induction, we construct the diagram (\ref{eq5**}).

  Let $A=R_0$ and $C=T_0$. We will show that strong local monomialization doesn't hold above $A\rightarrow C$ along $\mu$. Suppose that $R'\rightarrow T'$ has a strongly monomial form above $A\rightarrow C$. Then $R'$ has regular parameters $u',v'$ and $T'$ has regular parameters $z',w'$ such that $u'=\lambda (z')^m$ and $v'=w'$ where $m\in \ZZ_{>0}$ and $\lambda$ is a unit in $T'$. We will show that this cannot occur. There exists a commutative diagram
  $$
  \begin{array}{ccccc}
  T_s&\rightarrow&T'&\rightarrow&T_{s+1}\\
  \uparrow&&\uparrow&&\uparrow\\
  R_s&\rightarrow&R'&\rightarrow&R_{s+1}
  \end{array}
  $$
  for some $s$. The ring $T'$ has regular parameters $\overline z,\overline w$ such that 
  \begin{equation}\label{eq108}
  z_s=\overline z^a\overline w^b, w_s=\overline z^c\overline w^d
  \end{equation}
   for some $a,b,c,d\in \NN$ with $ad-bc=\pm1$, and $R'$ has regular parameters $\overline u,\overline v$ such that $u_s=\overline u^{\overline a}\overline v^{\overline b}$, $v_s=\overline u^{\overline c}\overline v^{\overline d}$, where $\overline a \overline d-\overline b \overline c=\pm 1$. We have an expression 
  \begin{equation}\label{eq102}
  u_s=\alpha z_s^p, v_s=\beta w_s^p+\Omega
  \end{equation}
  where $\alpha,\beta$ are units in $T_s$ and where 
  \begin{equation}\label{eq109}
  \Omega=\epsilon z_s^{pc_s}w_s+M
  \end{equation}
  or
  \begin{equation}\label{eq110}
  \Omega=\epsilon z_s^{c_s'}w_s+M
  \end{equation}
  where $\epsilon\in T_s$ is a unit and $M$ is a sum of monomials in $z_s,w_s$ of high order in $z_s$. Further,
  $\mu(w_s^p)<\mu(z_s^{pc_s}w_s)$ in (\ref{eq109}) and
  $\mu(w_s^p)<\mu(z_s^{c_s'}w_s)$ in (\ref{eq110}).
  
  In particular, $R_s\rightarrow T_s$ is not a strongly monomial form. 
  
   Substituting (\ref{eq108}) into $u_s$ and $v_s$ in (\ref{eq102}), we have 
  \begin{equation}\label{eq113}
  u_s=\alpha \overline z^{ap}\overline w^{bp}, v_s=\beta \overline z^{cp}\overline w^{dp}+\Omega.
  \end{equation}
  We necessarily have that $u_s|v_s$ or $v_s|u_s$ in $T'$. 
  
  First suppose that $c\ge a$ and $d\ge b$. Then we have that
  $$
  u_s=\alpha \overline z^{ap}\overline w^{bp},
  \frac{v_s}{u_s}=\beta\overline z^{(c-a)p}\overline w^{(d-b)p}+\frac{\Omega}{\alpha \overline z^{ap}\overline w^{bp}}
  $$
  giving an expression of the form (\ref{eq113}).
  We will show that this is not a strongly monomial form. If it is, then we must have that $a=0$ or $b=0$ so that either 
  \begin{equation}\label{eq111}
  z_s=\overline w, w_s=\overline z \overline w^d
  \end{equation}
  or
  \begin{equation}\label{eq112}
  z_s=\overline z, w_s=\overline z^c \overline w
  \end{equation}
  and we must have that $\frac{\Omega}{u_s}$ is part of a regular system of parameters in $T'$. Substituting into (\ref{eq109}) or (\ref{eq110}), we see that this  cannot occur except possibly in the case that (\ref{eq110}) holds and $\frac{z_s^{c_s'}w_s}{u_s}$ is part of a regular system of parameters in $T'$.

 Suppose that  (\ref{eq110}) and (\ref{eq11}) hold with
  $$
  \frac{z_s^{c_s'}w_s}{u_s}=\frac{\overline w^{c_s'+d}\overline z}{\alpha \overline w^p}
  $$
  being part of a regular system of parameters in $T'$. Now in this case, $\mu(w_s)>\mu(z_s)$ and $\mu(w_s^p)<\mu(z_s^{c_s'}w_s)$ so $p\le c_s'$. Thus $\frac{\overline w^{c_s'+d}\overline z}{\alpha \overline w^p}$ cannot be part of a regular system of parameters in $T'$.   A similar argument shows that we do not obtain a strongly monomial form when (\ref{eq110}) and (\ref{eq112}) hold. 
  
  Suppose that $c<a$ and $d<b$. Then we have expressions
  $$
  v_s=\gamma\overline z^{cp}\overline w^{dp},
  \frac{u_s}{v_s}=\alpha \gamma^{-1}\overline z^{(a-c)p}\overline w^{(b-d)p}
  $$
  where $\gamma\in T'$ is a unit, giving an expression of the form of (\ref{eq113}), which is not strongly monomial. Thus we reduce to the case where $(c-a)(d-b)<0$. We then have that $u_s\not | v_s$ since $u_s\not |\overline z^{cp}\overline w^{dp}$. Suppose that $v_s|u_s$. Then $v_s=\lambda \overline z^{cp}\overline w^{dp}$ where $\lambda$ is a unit in $T'$. But this is impossible since $(c-a)(d-b)<0$.
 Thus $R'\rightarrow T'$ has a form (\ref{eq113}) with $a,b,c,d>0$ and so 
 cannot be a strongly monomial form. We have established that strong local monomialization along $\mu$ does not hold above $A\rightarrow C$.

From Theorem \ref{TheoremA}, we have that 
\begin{equation}\label{eq60*}
\left(\frac{c_{r+1}}{p-1}\right)\frac{1}{m_1\cdots m_{r+1}}=\left(\frac{c_r}{p-1}\right)\frac{1}{m_1\cdots m_{r}}
-\frac{q_{r+1}}{m_{r+1}}\left(\frac{1}{m_1\cdots m_{r}}\right)+\frac{1}{m_1\cdots m_{r+1}}.
\end{equation}
Then from Theorem \ref{TheoremB}, we have that
$$
\frac{c_{r+2}}{p-1}=\left(\frac{c_{r+1}}{p-1}\right)m_{r+2}-m_{r+2},
$$
and so
\begin{equation}\label{eq61*}
\left(\frac{c_{r+2}}{p-1}\right)\frac{1}{m_1\cdots m_{r+2}}
=\left(\frac{c_r}{p-1}\right)\frac{1}{m_1\cdots m_{r}}-\frac{q_{r+1}}{m_1\cdots m_{r+1}}.
\end{equation}
By equation (\ref{eq64*})  we have
\begin{equation}\label{eq65*}
\frac{1}{2^{r+1}}>\left(\frac{c_r}{p-1}\right)\frac{1}{m_1\cdots m_{r}} -\left(\frac{q_{r+1}}{m_{r+1}}\right)\frac{1}{m_1\cdots m_{r}}>0.
\end{equation}


By Theorem \ref{TheoremA},
$$
\left(\frac{c_{r+2}'}{p-1}\right)\frac{1}{m_1'\cdots m_{r+2}'}=\left(\frac{c_{r+1}'}{p-1}\right)\frac{1}{m_1'\cdots m_{r+1}'}
-\frac{q_{r+2}'}{m_1'\cdots m_{r+2}'}+\frac{1}{m_1'\cdots m_{r+2}'}
$$
and by Theorem \ref{TheoremB},
$$
\frac{c_{r+3}'}{p-1}=\left(\frac{c_{r+2}'}{p-1}\right)m_{r+3}'-m_{r+3}'.
$$
We thus have that
\begin{equation}\label{eq2**}
\left(\frac{c_{r+3}'}{p-1}\right)\frac{1}{m_1'\cdots m_{r+3}'}=\left(\frac{c_{r+1}'}{p-1}\right)\frac{1}{m_1'\cdots m_{r+1}'}-
\frac{q_{r+2}'}{m_1'\cdots m_{r+2}'}.
\end{equation}
Equation (\ref{eq1**}) implies
\begin{equation}\label{eq3**}
\frac{1}{2^{r+2}}>\left(\frac{c_{r+1}'}{p-1}\right)\frac{1}{m_1'\cdots m_{r+1}'}-\frac{q_{r+2}'}{m_1'\cdots m_{r+2}'}>0.
\end{equation}

Now $J(S_i/R_i)=(x_i^{c_i})$ and $x_0=x_i^{m_1\cdots m_i}$ so
$\omega(J(S_i/R_i))=\frac{c_i}{m_1\cdots m_i}\omega(x_0)$ and thus
by Proposition \ref{Prop100}, (\ref{eq61*}) and (\ref{eq65*}), we have that
$$
-{\rm dist}(\omega/\nu)=\frac{1}{p-1}\inf_i\{\omega(J(S_i/R_i))\}=0.
$$

We  have that 
$J(T_i/S_i)=(z_i^{c_i'})$ and $z_0=z_i^{m_1'\cdots m_i'}$ so
$\omega(J(T_i/S_i))=\frac{c_i'}{m_1'\cdots m_i'}\omega(z_0)$ and thus
by Proposition \ref{Prop100}, (\ref{eq2**}) and (\ref{eq3**}), we have that
$$
-{\rm dist}(\mu/\omega)=\frac{1}{p-1}\inf_i\{\omega(J(T_i/S_i))\}=0.
$$

\end{proof}

\section{Calculation of distance in some examples}\label{Example}

We give an analysis of the tower of two Artin-Schreier extensions constructed in \cite[Theorem 7.38]{CP}. The example gives a diagram
$$
\begin{array}{ccccc}
R_1&\rightarrow &A_1&\rightarrow & S_1\\
\downarrow&&\downarrow&&\downarrow\\
R_2&\rightarrow &A_2&\rightarrow &S_2\\
\downarrow&&\downarrow&&\downarrow\\
\vdots&&\vdots&&\vdots
\end{array}
$$
where the first two columns and the last two columns are diagrams of the type of (\ref{eq30}). The union of the $S_i$ is the valuation ring of a rational rank 1 nondiscrete valuation $\omega$. The vertical arrows are all standard sequences of quadratic transforms. 

The rows are such that $R_i\rightarrow A_i$ is of type 2 if $i$ is odd and of type 1 if $i$ is even. The extension $A_i\rightarrow S_i$ is  of type 1 if $i$ is odd and of type 2 if $i$ is odd.  $R_i$ has regular parameters $u_i,v_i$ and $S_i$ has regular parameters $x_i,y_i$ such that 
$$
u_i=\gamma_i x_i^p, v_i=\tau_i y_i^p+x_ig_i
$$
for all $i$, where $\gamma_i,\tau_i$ are units in $S_i$ and $g_i\in S_i$. A further analysis in \cite{CP} shows that strong local monomialization fails for this example. 

An example is given in the later paper \cite{C2} where the condition of local monomialization itself fails.

The example of Section 7 of \cite{CP} is a composite of two defect Artin-Schreier extensions,
$$ 
K=k(u,v)\rightarrow K_1=k(x,v)\rightarrow K^*=k(x,y)
$$
where 
\begin{equation}\label{eq77}
u=\frac{x^p}{1-x^{p-1}}, v=y^p-x^cy
\end{equation}
with $c$ a positive integer which is divisible by $p-1$. A rational rank 1 valuation $\omega$ is given of $K^*$, which is trivial on $k$. Let $\nu_1$ be the restriction of $\omega$ to $K_1$ and $\nu$ be the restriction of $\omega$ to $K$.

We will determine the distances of these extensions, illustrating an application of Proposition \ref{Prop100}, showing that the extension $K\rightarrow K_1\rightarrow K^*$ is a tower of two  dependent defect Artin-Schreier extensions.
The first of these extensions was computed by a different method in \cite{EG}.

We have a sequence of algebraic regular local rings of $K$, $K_1$ and $K^*$, 
$$
R_1=k[u,v]_{(u,v)}\rightarrow A_1=k[x,v]_{(x,v)}\rightarrow S_1=k[x,y]_{(x,y)}
$$
such that $\omega$ dominates $S_1$. We normalize $\omega$ by setting $\omega(x)=1$. There are  sequences of homomorphisms
$$
\begin{array}{ccccc}
\vdots&&\vdots&&\vdots\\
\uparrow&&\uparrow&&\uparrow\\
R_{r+1}&\rightarrow &A_{r+1}&\rightarrow &S_{r+1}\\
\uparrow&&\uparrow&&\uparrow\\
R_r&\rightarrow&A_r&\rightarrow &S_r\\
\uparrow&&\uparrow&&\uparrow\\
\vdots&&\vdots&&\vdots\\
\uparrow&&\uparrow&&\uparrow\\
R_2&\rightarrow &A_2&\rightarrow &S_2\\
\uparrow&&\uparrow&&\uparrow\\
R_1&\rightarrow &A_1&\rightarrow &S_1\\
\end{array}
$$
where the vertical arrows are sequences of quadratic transforms which are dominated by $\omega$ and $S_r$ dominates $A_r$ and $A_r$ dominates $R_r$. These sequences are calculated above $R_1$ and $S_1$ in \cite{CP} and above $A_1$ in \cite{C1}.
 The homomorphism $R_k\rightarrow A_k$ is of type 1 if $k$ is even, and of type 2 if $k$ is odd. The homomorphism $A_k\rightarrow S_k$ is of type 1 if $k$ is odd and of type 1 if $k$ is even. 

The local ring $A_r$ has regular parameters $(x_{A_k}, v_{A_k})$ and $(x_{A_k},\overline v_k)$ such that $A_k\rightarrow A_{k+1}$ is defined (equations (70) and (71) of \cite{C1}) by
\begin{equation}\label{eq70}
x_{A_k}=x_{A_{k+1}}^p(v_{A_{k+1}}+1), \overline v_{A_k}=x_{A_{k+1}}
\mbox{ if $k$ is odd}
\end{equation}
and 
\begin{equation}\label{eq71}
x_{A_k}=x_{A_{k+1}}^{p^3}(v_{A_{k+1}}+1), \overline v_{A_k}=x_{A_{k+1}}
\mbox{ if $k$ is even.}
\end{equation}
The local ring $S_r$ has regular parameters $(x_{S_k}, y_{S_k})$ and $(x_{S_k},\overline y_{S_k})$ such that $S_k\rightarrow S_{k+1}$ is defined for all $k\ge 1$ (equation (45) of \cite{C1}) by 
\begin{equation}\label{eq72}
x_{S_k}=x_{S_{k+1}}^{p^2}(\overline y_{k+1}+1), \overline y_k=x_{S_{k+1}}.
\end{equation}
By \cite[Theorem 7.38]{CP}, the local ring $R_r$ has regular parameters $(x_{R_k}, y_{R_k})$ such that 
the homomorphism $R_k\rightarrow S_k$  has a stable form
$$
u_{R_k}=\gamma_kx_{S_k}^p, v_{R_k}=\alpha_ky_{S_k}^p+x_{S_k}g_k
$$
for all $k\ge 1$, where $\gamma_k$ and $\alpha_k$ are units in $S_k$ and $g_k\in S_k$.
We have that $\omega(x_{A_1})=1$ and for $k\ge 2$, we deduce from
(\ref{eq70}) and (\ref{eq71}) that
$$
\omega(x_{A_k})=\left\{\begin{array}{ll}
\frac{1}{p^{2k-2}} &\mbox{ if $k$ is odd,}\\
\frac{1}{p^{2k-3}}&\mbox{ if $k$ is even.}
\end{array}\right.
$$
Letting $J(A_k/R_k)=(x_{A_k}^{c_k})$, we calculate from 2) of Remark \ref{RemarkA} that
if $k$ is even, then
\begin{equation}\label{eq73}
\left(\frac{c_{k+1}}{p-1}\right)\omega(x_{A_{k+1}})=\left(\frac{c_k}{p-1}\right)\omega(x_{A_k})-\omega(\overline v_{A_k})+\omega(x_{A_{k+1}})
=\left(\frac{c_k}{p-1}\right)\omega(x_{A_k})
\end{equation}
and if $k$ is odd, we calculate from 1) of Remark \ref{RemarkB} that
\begin{equation}\label{eq74}
\left(\frac{c_{k+1}}{p-1}\right)\omega(x_{A_{k+1}})=\left(\frac{c_k}{p-1}\right)\omega(x_{A_k})-\omega(x_{A_{k}})
=\left(\frac{c_k}{p-1}\right)\omega(x_{A_k})-\frac{1}{p^{2k-2}}.
\end{equation}
From 
$$
u_{R_1}=\frac{x_{A_1}^p}{1-x_{A_1}^{p-1}}, v_{R_1}=v_{A_1}
$$
 we compute $J(A_1/R_1)=(x_{A_1}^{2p-2})$ so $\omega(J(A_1/R_1)=2(p-1)$ and $c_1=2p-2$.
 Now we compute from equations (\ref{eq73}) and (\ref{eq74}) and Proposition \ref{Prop100} that
 $$
 -{\rm dist}(\nu_1/\nu)=\frac{1}{p-1}\inf_k\omega(J(A_k/R_k))
 =1-\left(\sum_{i=1}^{\infty}\frac{1}{p^{4i}}\right)=1-\left(\frac{1}{p^4-1}\right)=\frac{p^4-2}{p^4-1}.
 $$
 Since ${\rm dist}(\nu_1/\nu)$ is less than zero, the extension is dependent.
This distance is computed using a different method in \cite{EG}. 

From (\ref{eq72}), we compute
\begin{equation}\label{eq72*}
\omega(x_{S_k})=\frac{1}{p^{2k-2}}
\end{equation}
for $k\ge 1$. If $k$ is odd, we compute from 2) of Remark \ref{RemarkA} that
\begin{equation}\label{eq75}
\left(\frac{c_{k+1}}{p-1}\right)\omega(x_{S_{k+1}})=\left(\frac{c_k}{p-1}\right)\omega(x_{S_k})-\omega(\overline y_{S_k})+\omega(x_{S_{k+1}})
=\left(\frac{c_k}{p-1}\right)\omega(x_{S_k}).
\end{equation}
 If $k$ is even, we compute from 1) of Remark \ref{RemarkB} that 
\begin{equation}\label{eq76}
\left(\frac{c_{k+1}}{p-1}\right)\omega(x_{S_{k+1}})=\left(\frac{c_k}{p-1}\right)\omega(x_{S_k})-\omega(x_{S_{k}})
=\left(\frac{c_k}{p-1}\right)\omega(x_{S_k})-\frac{1}{p^{2k-2}}.
\end{equation}
Now we compute from (\ref{eq77}) that $J(S_1/A_1)=(x_1^c)$ so $c_1=c$. 
Now we compute from equations (\ref{eq75}) and (\ref{eq76}) and Proposition \ref{Prop100} that
 $$
 \begin{array}{lll}
 -{\rm dist}(\omega/\nu_1)&=&\frac{1}{p-1}\inf_k\omega(J(S_k/A_k))\\
 &=&\frac{c}{p-1}-\frac{1}{p^2}\left(\sum_{i=0}^{\infty}\frac{1}{p^{4i}}\right)\\
 &=&\frac{c}{p-1}-\frac{1}{p^2}\left(\frac{p^4}{p^4-1}\right)=\frac{cp^3+(c-1)p^2+cp+c}{p^4-1}.
 \end{array}
 $$
 Now $cp^3+(c-1)p^2+cp+c=0$ is positive for all positive integers $c$,  so $\mbox{dist}(\omega/\nu_1)$ is less than zero for all  positive integral $c$ and thus the extension is dependent.

\section{Appendix} In this appendix we give proofs of the results on defect cuts and ramification cuts
of Artin-Schreier extensions from \cite{KP}. These results are stated in \cite{KP} and  their proofs are outlined there. 

We suppose throughout this section that $L$ is an Artin-Schreier extension of a field $K$ of characteristic $p$, $\omega$ is a  rank 1 valuation of $L$ and $\nu$ is the restriction of $\omega$ to $K$. We suppose that $L$ is a defect extension of $K$.  We will use the notation of Subsections \ref{Galois} and \ref{Rank1AS}.



 Let $\Theta$ be an Artin-Schreier generator of $K$.
We have that 
$$
\mbox{Gal}(L/K)\cong \ZZ_p=\{{\rm id}, \sigma_1,\ldots,\sigma_{p-1}\},
$$
where $\sigma_i(\Theta)=\Theta+i$.
Since $L/K$ is an immediate extension, the set $\omega(\Theta - K)$ is an initial segment in $\nu K$ which  has no maximal element. Further, $\omega(\Theta)<0$ by \cite[Lemma 2.28]{Ku}.
Let $s=\mbox{dist}(\omega/\nu)\in \RR$, so that 
$$
\mbox{dist}(\Theta,K)\uparrow \RR=s^{-}=\mbox{dist}(\omega/\nu)^{-}\le 0^-.
$$

There exists a sequence $\{c_i\}_{i\in \NN}$ in $K$ such that 
$$
\omega(\Theta-c_i)<\omega(\Theta-c_{i+1})
$$
for all $i$, and
$$
\lim_{i\rightarrow \infty}\omega(\Theta-c_i)=\mbox{dist}(\omega/\nu).
$$

Suppose that $r<s$. Then
$$
\nu(c_s-c_r)=\omega((\Theta-c_r)-(\Theta-c_s))=\omega(\Theta-c_r).
$$
Thus for $r<s<t$,
$$
\nu(c_s-c_r)=\omega(\Theta-c_r)<\omega(\Theta-c_s)=\nu(c_t-c_s),
$$
so $\{c_i\}$ is a pseudo convergent sequence in $K$ (\cite{Kap} or Page 39 \cite{Sch}). For $r\in \NN$, define 
$$
\gamma_r=\{\nu(c_s-c_r)\mid r<s\}=\omega(\Theta-c_r).
$$
The Artin-Schreier generator $\Theta$ is a pseudo limit of $\{c_i\}$ (\cite{Kap} or page 47 \cite{Sch}). 

\begin{Lemma}
 The pseudo convergent sequence $\{c_i\}$ does not have a pseudo limit in $K$.
 \end{Lemma}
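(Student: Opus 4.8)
The plan is to argue by contradiction: suppose some $c\in K$ is a pseudo limit of $\{c_i\}$, and play two incompatible estimates for $\omega(\Theta-c)$ against each other. Note first that $\Theta\neq c$, since $[L:K]=p$ forces $\Theta\notin K$, so $\omega(\Theta-c)$ is a genuine element of $\omega L$.

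First I would use that $c$ and $\Theta$ are both pseudo limits of the same pseudo convergent sequence. Since $c$ is a pseudo limit, $\nu(c-c_i)=\gamma_i$ for all $i$ (the basic property of pseudo limits, as in \cite{Kap}, \cite{Sch}), and by construction $\gamma_i=\omega(\Theta-c_i)$. Fixing an index $i$ and choosing $j>i$, I would write
$$
\Theta-c=(\Theta-c_j)-(c-c_j),
$$
where both terms on the right have $\omega$-value $\gamma_j$, so $\omega(\Theta-c)\ge\gamma_j>\gamma_i$. Hence $\omega(\Theta-c)>\gamma_i$ for every $i$. Since $\gamma_i=\omega(\Theta-c_i)$ is strictly increasing with $\lim_{i\to\infty}\gamma_i=\mbox{dist}(\omega/\nu)=s$, this gives $\omega(\Theta-c)\ge s$.

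On the other hand, $c\in K$ and $L/K$ is an immediate extension, so $\omega L=\nu K$, and therefore $\omega(\Theta-c)\in\nu K$; consequently $\omega(\Theta-c)\in\Lambda^L(\Theta,K)$. But $\mbox{dist}(\Theta,K)\uparrow\RR=s^{-}$ says precisely that $\Lambda^L(\Theta,K)$, viewed as a subset of $\RR$, is contained in $\{x\in\RR\mid x<s\}$; hence $\omega(\Theta-c)<s$. This contradicts $\omega(\Theta-c)\ge s$, so $\{c_i\}$ has no pseudo limit in $K$.

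I expect this to be essentially the whole argument. The only things requiring care are recalling the defining property of a pseudo limit used in the first step, and keeping the bookkeeping around the distance cut straight — namely that the supremum in $\RR$ of the $\gamma_i$ is exactly $s$ and that every element of $\Lambda^L(\Theta,K)$ lies strictly below $s$ — so that "strictly above every $\gamma_i$" correctly upgrades to "at least $s$". I do not anticipate any substantive obstacle beyond this.
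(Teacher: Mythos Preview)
Your argument is correct and follows essentially the same route as the paper's proof: assume a pseudo limit $c\in K$, use the defining property $\nu(c-c_i)=\gamma_i$ together with the ultrametric inequality on $(\Theta-c_i)-(c-c_i)$ to force $\omega(\Theta-c)\ge\gamma_i$ for all $i$, and contradict the fact that $\omega(\Theta-c)$ must lie in the initial segment defining $\mbox{dist}(\Theta,K)$. You supply a bit more bookkeeping around the cut than the paper does, but the idea is identical.
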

 
 \begin{proof} Suppose $c\in K$ is a pseudo limit of $\{c_i\}$ in $K$. Then
 $$
 \nu(c-c_r)=\gamma_r=\omega(\Theta-c_r)\mbox{ for all $r$}.
 $$
 Thus for all $r$,
 $$
 \omega(\Theta-c)=\omega((\Theta-c_r)+(c_r-c))\ge \omega(\Theta-c_r)
 $$
 so $\omega(\Theta-c)\ge \mbox{dist}(\omega/\nu)$, a contradiction.
 \end{proof}
 
 \begin{Lemma}\label{Lemma10} Suppose that $f(x)\in K[x]$ is a polynomial such that $\mbox{deg}(f)<p$. Then there exists
 $t_0\in \NN$ such that $\nu(f(c_t))=\nu(f(c_{t_0}))$ for $t\ge t_0$.
 \end{Lemma}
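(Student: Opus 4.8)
The plan is to establish the cleaner assertion that for every \emph{monic} $f\in K[x]$ with $\deg f<p$ the value $\nu(f(c_t))$ is eventually constant (the general case follows after scaling by a nonzero element of $K$), arguing by strong induction on $d=\deg f$; the case $d=0$ is trivial.

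For the inductive step I would use the Hasse--Schmidt Taylor expansion
$$
f(c_s)=\sum_{j=0}^{d}\partial_j f(c_r)\,(c_s-c_r)^j\qquad(r<s),
$$
where $\partial_j f$ is the $j$-th Hasse derivative. The hypothesis $d<p$ enters here in an essential way: since $0\le j\le d<p$, the leading coefficient $\binom{d}{j}$ of $\partial_j f$ is nonzero in $k$, so $\partial_j f$ has degree exactly $d-j<d$ for $1\le j\le d$ (and $\partial_d f=1$), and the inductive hypothesis applies to each of them. Thus there are $t_1\in\NN$ and values $\beta_j\in\nu K$ (with $\beta_d=0$) so that $\nu(\partial_j f(c_t))=\beta_j$ for all $t\ge t_1$ and $1\le j\le d$. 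Since $\gamma_r=\nu(c_s-c_r)$ is independent of $s>r$, the $j$-th summand above has value $\beta_j+j\gamma_r$ for $r\ge t_1$; and because $\{\gamma_r\}$ is strictly increasing, for each pair $j\ne k$ the equality $\beta_j+j\gamma_r=\beta_k+k\gamma_r$ holds for at most one $r$. Discarding these finitely many ``bad'' indices, for each remaining $r\ge t_1$ the minimum $\delta_r:=\min_{1\le j\le d}(\beta_j+j\gamma_r)$ is attained at a unique $j$, hence $\nu(f(c_s)-f(c_r))=\delta_r$ for all $s>r$ (in particular independently of $s$), with $\delta_r\le\beta_d+d\gamma_r=d\gamma_r$.

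Now fix such a good $r$ and write $f(c_s)=f(c_r)+(f(c_s)-f(c_r))$ for $s>r$. If $\nu(f(c_r))\ne\delta_r$, then $\nu(f(c_s))=\min(\nu(f(c_r)),\delta_r)$ for every $s>r$, which does not depend on $s$, and the lemma holds with $t_0=r+1$. It therefore remains to exclude the case in which $\nu(f(c_r))=\delta_r$ for every good $r$; in that case $\nu(f(c_r))=\delta_r$ is strictly increasing in $r$, so $f$ is a \emph{limit polynomial} of the pseudo-convergent sequence $\{c_i\}$. Replacing $f$ by an irreducible factor having the same property (a product of polynomials of degree $<d$ would, by the inductive hypothesis, have eventually constant value on $\{c_i\}$)---which therefore still has degree $d$---we may assume $f$ is irreducible, hence separable since $\deg f<p$. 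By the theory of pseudo-convergent sequences of algebraic type (\cite{Kap}, \cite{Sch}), $f$ has a root $\theta$ that is a pseudo-limit of $\{c_i\}$; then $K(\theta)/K$ is an immediate extension of degree $d<p$ and $\theta\sim_K\Theta$, and $\theta\notin K$ (so $d\ge 2$), since otherwise $\{c_i\}$ would have a pseudo-limit in $K$, contrary to the previous Lemma.

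The remaining step---ruling this out---is the main obstacle. The plan is to play $f$ off against the Artin--Schreier polynomial $g(X)=X^p-X-a$ (recall $\Theta^p-\Theta=a$): from $c_i^p-c_i-a=(c_i-\Theta)^p-(c_i-\Theta)$ and $\gamma_i<0$ one computes $\nu(g(c_i))=p\gamma_i$, so $g$ too is a limit polynomial, and it is irreducible of degree $p$. Since $\gcd(f,g)=1$ in $K[x]$, choose $A,B\in K[x]$ with $Af+Bg=1$; evaluating at $c_i$, both $\nu(A(c_i)f(c_i))$ and $\nu(B(c_i)g(c_i))$ are strictly increasing in $i$, so the identity $A(c_i)f(c_i)+B(c_i)g(c_i)=1$ forces these two values to be equal for all large $i$. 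Using that along $\{c_i\}$ the value of any nonzero polynomial is eventually of the form $(\text{const})+n\gamma_i$, with $n\ge 0$ the number of its roots that are pseudo-limits of $\{c_i\}$, together with $\nu(f(c_i))=\beta_J+J\gamma_i$ for a fixed $J$ with $1\le J\le d$, one matches $\gamma_i$-coefficients to conclude that $A$ has at least $p-J\ge p-d\ge 1$ roots (with multiplicity) that are pseudo-limits of $\{c_i\}$. Since every such root generates over $K$ an extension of degree $\ge d$, the minimal limit polynomial of $\{c_i\}$ is unique (two of minimal degree differ by a polynomial of smaller degree, whose value is eventually constant), and $\deg A=\deg B+p-d$, a degree count reconciling these constraints is intended to produce the contradiction. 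I expect this final bookkeeping (and, if necessary, the analogous analysis of non-minimal pseudo-limit roots) to be the delicate point; everything preceding it is routine.
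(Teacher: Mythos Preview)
Your proposal reproves, in effect, the preparatory lemmas of Kaplansky's theory (the Taylor expansion analysis and the dichotomy ``eventually constant or $\mathrm{const}+J\gamma_r$'') and then attempts to rule out a minimal limit polynomial of degree $d$ with $2\le d<p$ via a B\'ezout identity against the Artin--Schreier polynomial. The paper, by contrast, proves the lemma in one line by invoking \cite[Lemma~10]{Kap}: for a pseudo-convergent sequence of algebraic type in residue characteristic $p$, the minimal degree of a polynomial whose values fail to stabilize is a power of $p$; combined with the preceding lemma (no pseudo-limit in $K$, hence this minimal degree exceeds~$1$), the result follows.

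The difficulty is that your B\'ezout argument does not close. From $Af+Bg=1$ with $\deg B<d$ you correctly deduce $n_B=0$ by the inductive hypothesis, hence $n_A=p-J$; and you have $\deg A=\deg B+(p-d)\le p-1$. But the remaining constraints are mutually consistent: $n_A\le\deg A$ gives only $\deg B\ge d-J\ge 0$, and the observation that every pseudo-limit root of $A$ has minimal polynomial of degree $\ge d$ only bounds the number of irreducible ``limit'' factors of $A$ by $\lfloor(p-1)/d\rfloor$, which is compatible with $n_A=p-J$ for any $1\le J\le d$. I do not see how matching the constant terms or counting degrees yields a contradiction; the information you extract from $Af+Bg=1$ is simply not strong enough to pin down $d$. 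The content you are missing is precisely Kaplansky's Lemma~10, whose proof proceeds by a different mechanism (an iteration on the index $J$ at which the minimum in the Taylor expansion is attained, forcing $J\mid d$ and ultimately $d=p^k$). Either cite that result, as the paper does, or reproduce Kaplansky's argument; the B\'ezout route, as sketched, does not reach the goal.
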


 \begin{proof} Since $\nu$ has rank 1, by \cite[Lemma 10]{Kap}, the smallest degree of a  polynomial  $g(x)\in K[x]$ such that $\nu(g(c_t))$ does not stabilize for large $t$ is a power of $p$. Since $\{c_t\}$ does not have a pseudo limit in $K$, we have that this degree is $\ge p$. 
 \end{proof}

 \begin{Proposition}\label{Prop11} Suppose that $f(x)\in K[x]$ is a polynomial of degree $r<p$. Let $\Theta_i=\Theta-c_i$. Then there are polynomials $g_j(Y)\in K[Y]$ of degree $\le j$ such that
 \begin{equation}\label{eq13*}
 f(\Theta)=g_0(c_i)\Theta_i^r+g_1(c_i)\Theta_i^{r-1}+\cdots+g_{r-1}(c_i)\Theta_i+g_r(c_i)
 \end{equation}
 for all $i$, with $g_0(c_i)=g_0$ a non zero element of $K$. Further, there exists $i_0$  and $\lambda_j\in \nu K$ (depending on $f$) such that 
 $$
 \nu(g_j(c_i))=\lambda_j
 $$
 for $i\ge i_0$ and $0\le j\le r$, and
\begin{equation}\label{eq12*}
 \omega(g_j(c_i)\Theta_i^{r-j})\ne \omega(g_k(c_i)\Theta_i^{r-k})
 \end{equation}
 for all $0\le j<k\le r$  and $i\ge i_0$.
 \end{Proposition}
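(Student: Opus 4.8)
The plan is to produce the identity (\ref{eq13*}) by a Taylor expansion of $f$ about $c_i$, and then to pin down all the valuation statements by combining Lemma \ref{Lemma10} (stabilization of $\nu(g_j(c_i))$) with the fact that the sequence $\{\gamma_i\}$, $\gamma_i=\omega(\Theta-c_i)$, is strictly increasing.

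First I would write $\Theta=\Theta_i+c_i$ and use the Hasse (divided‑difference) Taylor formula
$$
f(Y+Z)=\sum_{k=0}^{r}h_k(Z)Y^k,
$$
where $h_k\in K[Z]$ is the $k$‑th Hasse derivative of $f$, a polynomial of degree $\le r-k$. Substituting $Y=\Theta_i$, $Z=c_i$ and setting $g_j:=h_{r-j}$ gives (\ref{eq13*}) with $\deg g_j\le j$, and $g_0=h_r$ is the leading coefficient $a_r$ of $f$, a nonzero element of $K$ independent of $i$. The point I would stress here is that each $h_k$ with $0\le k\le r$ is actually a \emph{nonzero} polynomial: its term of highest degree in $Z$ is $\binom{r}{k}a_r Z^{r-k}$, and $\binom{r}{k}\neq 0$ in $K$ because $r<p$ (equivalently, since $r<p$ one may take $h_k=\tfrac1{k!}f^{(k)}$ and the factors $r,r-1,\dots,r-k+1$ are all nonzero in $K$). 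Hence each $g_j$ for $0\le j\le r$ is a nonzero polynomial over $K$ of degree $<p$.

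Next, since a nonzero polynomial of degree $\le j$ has at most $j$ roots and the $c_i$ are distinct, $g_j(c_i)\neq 0$ for all large $i$; applying Lemma \ref{Lemma10} to each of the finitely many $g_0,\dots,g_r$ gives an index $i_1$ such that $\nu(g_j(c_i))=\lambda_j\in\nu K$ is constant for $i\ge i_1$ and all $0\le j\le r$. Recalling $\omega(\Theta_i)=\gamma_i$, for $i\ge i_1$ we have $\omega\big(g_j(c_i)\Theta_i^{r-j}\big)=\lambda_j+(r-j)\gamma_i$. For a pair $0\le j<k\le r$, equality of these two values is equivalent to $\gamma_i=(\lambda_j-\lambda_k)/(k-j)$, a fixed element of the divisible hull $\widetilde{\nu K}$; since $\{\gamma_i\}$ is strictly increasing it can equal this element for at most one index $i$. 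Enlarging $i_1$ to an $i_0$ exceeding all the finitely many indices arising from the pairs $j<k$ then yields (\ref{eq12*}) for $i\ge i_0$, while keeping the stabilization of the $\lambda_j$.

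The only genuinely delicate step is the nonvanishing of the $g_j$ (equivalently of the Hasse derivatives $h_k$, $0\le k\le r$): this is exactly where the hypothesis $\deg f=r<p$ is used, and it is what guarantees that each $\lambda_j$ is an honest element of $\nu K$ rather than $\infty$, so that the monotonicity argument for (\ref{eq12*}) is available. Everything else is bookkeeping — choosing a single $i_0$ large enough to serve simultaneously for the stabilization coming from Lemma \ref{Lemma10} and for the finitely many coincidence indices in the separation step.
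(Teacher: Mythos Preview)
Your proof is correct. The valuation part (stabilization via Lemma \ref{Lemma10} and separation via strict monotonicity of $\gamma_i=\omega(\Theta_i)$) is essentially identical to the paper's argument. Where you differ is in obtaining the expansion (\ref{eq13*}): the paper factors $f=f_0f_1\cdots f_l$ into irreducibles, passes to roots in an algebraic closure, and uses symmetric functions together with Galois invariance of the splitting fields $L_j$ to argue that the coefficients of $f_j(x+y)$ lie in $K[y]$; you bypass all of this with a direct Taylor/Hasse expansion, which is more elementary and gets there in one line since $f\in K[x]$ already forces the coefficients of $f(x+y)$ in $x$ to lie in $K[y]$. Your approach also makes explicit a point the paper leaves implicit: that each $g_j$ is a \emph{nonzero} polynomial because its top coefficient is $\binom{r}{r-j}a_r$ with $\binom{r}{r-j}\ne 0$ in characteristic $p$ when $r<p$. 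This is what guarantees $\lambda_j\in\nu K$ rather than $\infty$, and hence that the separation argument for (\ref{eq12*}) is well-posed; the paper's proof applies Lemma \ref{Lemma10} and then uses the $\lambda_j$ as finite quantities without commenting on this.
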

 
 \begin{proof} We have a factorization
 $f(x)=f_0f_1(x)\cdots f_l(x)$ where $f_0\in K$ and $f_i(x)$ are monic and irreducible for $1\le i\le l$. Let $r_j=\mbox{deg}(f_j(x))$.
 Let $\Omega$ be an algebraic closure of $K$ containing $L$. We have factorizations
 $$
 f_j(x)=(x-a_{j1})(x-a_{j2})\cdots (x-a_{jr_j})
 $$
 with $a_{ji}\in \Omega$ for $1\le j\le r$, giving expressions
 $$
 f_j(x)=x^{r_j}-S_1(a_{j1},\ldots,a_{jr_j})x^{r_j-1}+\cdots +(-1)^{r_j}S_{r_j}(a_{j1},\ldots,a_{jr_j})
 $$
 where $S_i$ is the elementary symmetric function of degree $i$. Let $y$ be an indeterminate. Then
 $$
 f_j(x+y)=x^{r_j}-S_1(a_{j1}-y,\ldots,a_{jr_j}-y)x^{r_j-1}+\cdots +(-1)^{r_j}S_{r_j}(a_{j1}-y,\ldots,a_{jr_j}-y).
 $$
 Let $L_j=K(a_{j1},\ldots,a_{jr_j})$ and set 
 $$
 h_i=S_i(a_{j1}-y,\ldots,a_{jr_j}-y)\in L_j[y]
 $$
 for $1\le i\le r_j$. $h_i$ is a polynomial of degree $i$. $h_i$ is invariant under permutation of the $a_{jk}$ and 
 $L_j$ is Galois over $K$ (since it is a normal extension of $K$ and $r_j<p)$. Thus $h_i\in K[y]$ for $i\le r_j$, and we have an expression
 \begin{equation}\label{eq3}
 f(x+y)=g_0x^r+g_1(y)x^{r-1}+\cdots+g_{r-1}(y)x+g_r(y)
 \end{equation}
 where $g_i(y)\in K[y]$ is a polynomial of degree $\le i$ ($g_0=f_0$). For $i\in\NN$, we have an expression
 $$
 f(\Theta)=f(\Theta_i+c_i)
 =g_0\Theta_i^r+g_1(c_i)\Theta_i^{r-1}+\cdots+g_{r-1}(c_i)\Theta_i+g_r(c_i).
 $$
 By Lemma \ref{Lemma10}, there exists $i_0$ such that  $\nu(g_j(c_i))$ is a  constant value $\lambda_j$ for $i\ge i_0$ and $0\le j\le r$. Now $\omega(\Theta_{i+1})>\omega(\Theta_i)$ for all $i$, so for all $j,k$ with $0\le j< k\le r$, there exists $i(j,k)$ such that 
 $$
 \omega(\Theta_i)\ne \frac{\lambda_k-\lambda_j}{t}
 $$
 for $t$ any integer with $1\le t\le r$, whenever  $i>i(j,k)$.
 
 Thus for $i$ such that $i>i_0$ and $i>\max\{i(j,k)\mid 0\le j < k \le r\}$ and $0\le j<k\le r$,
 $$
 \omega(g_j(c_i)\Theta_i^{r-j})\ne \omega(g_k(c_i)\Theta_i^{r-k}).
 $$
 \end{proof}

 \begin{Corollary}\label{Cor10} The valuation ring $\mathcal O_{\omega}$ is generated as an $\mathcal O_{\nu}$-module  by 
 \begin{equation}\label{eqN30}
 \{g\Theta_i^j\mid g\in K, 0\le j\le p-1, i\in N\mbox{ and }\omega(g\Theta_i^j)\ge 0\}.
 \end{equation}
 \end{Corollary}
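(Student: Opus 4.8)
The plan is to deduce the statement from Proposition~\ref{Prop11}, using that $1,\Theta,\ldots,\Theta^{p-1}$ is a $K$-basis of $L$. One containment is immediate and does not involve Proposition~\ref{Prop11} at all: every element $g\Theta_i^j$ listed in (\ref{eqN30}) lies in $\mathcal O_{\omega}$ by its defining condition $\omega(g\Theta_i^j)\ge 0$, and $\mathcal O_{\omega}$ is a ring containing $\mathcal O_{\nu}$, so the $\mathcal O_{\nu}$-submodule of $L$ generated by (\ref{eqN30}) is contained in $\mathcal O_{\omega}$.

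For the reverse containment, I would take an arbitrary $z\in\mathcal O_{\omega}$ and write $z=f(\Theta)$ for the unique $f(x)\in K[x]$ with $r:=\deg f<p$. Applying Proposition~\ref{Prop11} to $f$ gives, for every $i\ge i_0$, the expansion
$$
z=f(\Theta)=g_0\Theta_i^{r}+g_1(c_i)\Theta_i^{r-1}+\cdots+g_{r-1}(c_i)\Theta_i+g_r(c_i),
$$
in which, by (\ref{eq12*}), the $r+1$ terms $g_j(c_i)\Theta_i^{r-j}$ with $0\le j\le r$ have pairwise distinct values under $\omega$. Since $\omega$ is a valuation, a sum of finitely many elements with pairwise distinct values has value equal to the minimum of those values, so $\omega(z)=\min_{0\le j\le r}\omega(g_j(c_i)\Theta_i^{r-j})\ge 0$. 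Hence each term $g_j(c_i)\Theta_i^{r-j}$ has nonnegative $\omega$-value; taking $g=g_j(c_i)\in K$ and noting $0\le r-j\le r<p$, each term belongs to the set (\ref{eqN30}). Thus $z$ is a finite sum of elements of (\ref{eqN30}), hence lies in the $\mathcal O_{\nu}$-module they generate, and we conclude $\mathcal O_{\omega}$ equals that module.

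Once Proposition~\ref{Prop11} is available, there is no genuine obstacle in this deduction; all the content lies in the distinctness of valuations (\ref{eq12*}), which in turn rests on $\omega$ having rank one and on $\{c_i\}$ having no pseudo limit in $K$ (Lemma~\ref{Lemma10}). The one degenerate case worth recording is $r=0$, i.e.\ $z\in K$: then $0\le\omega(z)=\nu(z)$ forces $z\in\mathcal O_{\nu}$, and $z=z\cdot\Theta_i^{0}$ already displays $z$ as an element of (\ref{eqN30}), so the argument applies verbatim.
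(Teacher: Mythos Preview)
Your proof is correct and follows essentially the same approach as the paper: both use Proposition~\ref{Prop11} to expand an arbitrary $h\in\mathcal O_{\omega}$ as $\sum_j g_j(c_i)\Theta_i^{r-j}$ with pairwise distinct $\omega$-values for $i\gg 0$, conclude that $\omega(h)$ equals the minimum of those values, and hence that each summand already lies in the set (\ref{eqN30}). Your added remark on the degenerate case $r=0$ is a nice touch but not strictly needed, since Proposition~\ref{Prop11} covers it as well.
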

 
 \begin{proof} Let $M$ be the $\mathcal O_{\nu}$-module generated by the set (\ref{eqN30}). The module $M$ is certainly contained in $\mathcal O_{\omega}$. Suppose that $h\in \mathcal O_{\omega}$. 
 Then $\omega(h)\ge 0$ and $h=f(\Theta)$ for some polynomial $f\in K[x]$ of degree $<p$. By Proposition \ref{Prop11}, we have an expression (\ref{eq13*}) of $h$. Taking $i$ sufficiently large (so that (\ref{eq12*}) holds) we have that 
 \begin{equation}\label{eq14*}
 0\le \omega(h)=\min\{\omega(g_j(c_i)\Theta_i^{r-j})\mid 0\le j\le r\}.
 \end{equation}
 Thus $h\in M$.
 \end{proof}
 
 \begin{Corollary}\label{Cor11} Suppose that $h\in L$. Then 
 $$
 \omega(\sigma(h)-h)=\omega(\tau(h)-h)
 $$
 for $\sigma, \tau\in \mbox{Gal}(L/K)$ which are both not the identity.
 \end{Corollary}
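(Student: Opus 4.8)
The plan is to reduce the statement to the expansion of $h$ in powers of $\Theta_i=\Theta-c_i$ furnished by Proposition \ref{Prop11}, and then to identify the unique minimal‑value term of $\sigma(h)-h$ and to observe that it does not depend on which nontrivial automorphism we apply.

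First I would dispose of the trivial case $h\in K$, in which $\sigma(h)-h=\tau(h)-h=0$. Otherwise, since $\{1,\Theta,\dots,\Theta^{p-1}\}$ is a $K$‑basis of $L$, write $h=f(\Theta)$ with $f\in K[x]$ of degree $r$, $1\le r\le p-1$, and write the two nontrivial automorphisms as $\sigma=\sigma_s$, $\tau=\sigma_t$ with $s,t\in\{1,\dots,p-1\}$; in particular neither $s$ nor $t$ is divisible by $p$. Fixing $i$ large enough for Proposition \ref{Prop11} to apply, I get
$$
h=\sum_{j=0}^{r}g_j(c_i)\,\Theta_i^{\,r-j},\qquad \nu\bigl(g_j(c_i)\bigr)=\lambda_j\quad(0\le j\le r),
$$
with the values $\omega\bigl(g_j(c_i)\Theta_i^{\,r-j}\bigr)=\lambda_j+(r-j)\gamma_i$ pairwise distinct, where $\gamma_i=\omega(\Theta_i)$. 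I will use that $\gamma_i<0$, which holds because the $\gamma_i$ strictly increase to ${\rm dist}(\omega/\nu)\le 0$.

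Next I would compute, using $\sigma_s(\Theta_i)=\Theta_i+s$ and $g_j(c_i)\in K$,
$$
\sigma_s(h)-h=\sum_{j=0}^{r-1}g_j(c_i)\bigl((\Theta_i+s)^{r-j}-\Theta_i^{\,r-j}\bigr)=\sum_{j=0}^{r-1}\ \sum_{l=1}^{r-j}g_j(c_i)\binom{r-j}{l}s^{l}\,\Theta_i^{\,r-j-l},
$$
the $j=r$ summand having dropped out since $(\Theta_i+s)^0=\Theta_i^0$. For fixed $j$ the summand with $l=1$ has coefficient $g_j(c_i)(r-j)s$, which is nonzero in $k$ because $1\le r-j<p$ and $1\le s<p$; and since $\gamma_i<0$ it carries the highest power of $\Theta_i$ among the summands with that $j$, so it is the strictly smallest‑value one, of value $\lambda_j+(r-j-1)\gamma_i$. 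Since the numbers $\lambda_j+(r-j)\gamma_i$ ($0\le j\le r$) are pairwise distinct, so are the numbers $\lambda_j+(r-j-1)\gamma_i$ ($0\le j\le r-1$), obtained from them by the uniform shift $-\gamma_i$. Hence the double sum has a unique term of minimal $\omega$‑value and
$$
\omega\bigl(\sigma_s(h)-h\bigr)=\min_{0\le j\le r-1}\bigl(\lambda_j+(r-j-1)\gamma_i\bigr),
$$
a quantity independent of $s$; applying the same with $t$ in place of $s$ yields $\omega(\sigma(h)-h)=\omega(\tau(h)-h)$.

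The step I expect to be the only real point of care is the bookkeeping in the displayed double sum: one must verify that for each $j$ the $l=1$ term strictly dominates (this uses $\gamma_i<0$ together with the survival of the coefficient $(r-j)s$ in characteristic $p$, whence the hypothesis that the automorphisms are nontrivial) and that these dominant terms have pairwise distinct valuations across the various $j$, so that no cancellation can disturb the overall minimum — which is exactly the distinctness supplied by Proposition \ref{Prop11} after shifting all exponents down by one.
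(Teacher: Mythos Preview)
Your proof is correct and follows essentially the same approach as the paper: expand $h$ via Proposition~\ref{Prop11}, compute $\sigma_s(h)-h$ using the binomial theorem, observe that in each $j$-block the $l=1$ term strictly dominates because $\omega(\Theta_i)<0$, and then use the pairwise distinctness from (\ref{eq12*}) (after a uniform shift by $-\gamma_i$) to identify the overall minimum as independent of $s$. Your write-up is slightly more explicit than the paper's about why $(r-j)s$ survives in characteristic $p$ and about the shift argument for distinctness, but the argument is the same.
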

 
 \begin{proof} $h=h(\Theta)$ has an expression of the form (\ref{eq13*}) of Proposition \ref{Prop11} such that (\ref{eq12*}) holds.
 We compute using (\ref{eq3}) for $0<j<p$,
 $$
 \begin{array}{lll}
 \sigma_j(h)-h&=&h(\Theta+j)-h(\Theta)=h(\Theta_i+j+c_i)-h(\Theta_i+c_i)\\
 &=&g_0(c_i)(\Theta_i+j)^r+\cdots+g_{r-1}(c_i)(\Theta_i+j)+g_r(c_i)\\
 &&-[g_0(c_i)\Theta_i^r+\cdots +g_{r-1}(c_i)\Theta_i+g_r(c_i)]\\
 &=&g_0(c_i)\left(\sum_{k=0}^{r-1}\binom{r}{k}j^{r-k}\Theta_i^k\right)
 +g_1(c_i)\left(\sum_{k=0}^{r-2}\binom{r-1}{k}j^{r-1-k}\Theta_i^k\right)
 +\cdots+g_{r-1}(c_i)j.
 \end{array}
 $$
 Since $\omega(\Theta_i)<0$, we have that
 $$
 \omega\left(g_l(c_i)\left(\sum_{k=0}^{r-l-1}\binom{r-l}{k}j^{r-l-k}\Theta_i^k\right)\right)=\omega(g_l(c_i)j\Theta_i^{r-l-1})
 =\omega(g_l(c_i)\Theta_i^{r-l-1})
$$
 for $0\le l\le r-1$. Thus
 \begin{equation}\label{eq15}
 \omega(\sigma_j(h)-h)=\min\{\omega(g_l(c_i)\Theta_i^{r-l-1})\mid 0\le l\le r-1\}
 \end{equation} 
 for $i\gg 0$ by equation (\ref{eq12*}).
 \end{proof}
 
 For $\alpha\in \omega L$, define the ideal $I_{\alpha}=\{f\in \mathcal O_L\mid \omega(f)\ge \alpha\}$ in $\mathcal O_{\omega}$.
 Then $G_{I_{\alpha}}$ (defined in Subsection \ref{SubsecHRG}) is  the subgroup     $$
  G_{I_{\alpha}}=\{s\in \mbox{Gal}(L/K)\mid \omega(s(x)-x)\ge \alpha\mbox{ for all }x\in \mathcal O_{\omega}\}
  $$
 of $G=\mbox{Gal}(L/K)$.
  
 \begin{Corollary}\label{Cor12} Continuing with our assumption that $L/K$ is a defect Artin-Schreier extension and $\omega$ has rank 1, Suppose that $\alpha\in \RR$. Then 
$$
G_{I_{\alpha}}=\left\{\begin{array}{ll}
\{1\}&\mbox{ if }\alpha>-\mbox{dist}(\omega/\nu)\\
\mbox{Gal}(L/K)&\,\mbox{ if } \alpha\le -\mbox{dist}(\omega/\nu).
\end{array}\right.
$$
\end{Corollary}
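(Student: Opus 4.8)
The plan is to reduce the problem to a single nontrivial automorphism and then estimate $\omega(\sigma_1(x)-x)$ for $x\in\mathcal O_{\omega}$, using the module generators provided by Corollary \ref{Cor10}.

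First, $G_{I_{\alpha}}$ always contains the identity, and by Corollary \ref{Cor11} the value $\omega(\sigma(x)-x)$, for a fixed $x\in L$, does not depend on the choice of non-identity $\sigma\in\mbox{Gal}(L/K)$. Hence $G_{I_{\alpha}}$ is either $\{1\}$ or all of $\mbox{Gal}(L/K)$, and it equals $\mbox{Gal}(L/K)$ precisely when $\sigma_1\in G_{I_{\alpha}}$, i.e. when $\omega(\sigma_1(x)-x)\ge\alpha$ for every $x\in\mathcal O_{\omega}$. Write $s={\rm dist}(\omega/\nu)$; since $\gamma_i=\omega(\Theta-c_i)=\omega(\Theta_i)$ is a strictly increasing sequence with $\gamma_i\to s$, we have $\gamma_i<s$ for every $i$.

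Next I would establish the bound $\omega(\sigma_1(x)-x)>-s$ for every $x\in\mathcal O_{\omega}$ with $\sigma_1(x)\ne x$. By Corollary \ref{Cor10} such an $x$ is a finite $\mathcal O_{\nu}$-linear combination of generators $g\Theta_i^{\,j}$ with $g\in K$, $0\le j\le p-1$, and $\omega(g\Theta_i^{\,j})\ge 0$; because $\sigma_1$ fixes $K$ (hence $\mathcal O_{\nu}$), it suffices to bound $\omega\bigl(\sigma_1(g\Theta_i^{\,j})-g\Theta_i^{\,j}\bigr)=\omega\bigl(g\bigl((\Theta_i+1)^{\,j}-\Theta_i^{\,j}\bigr)\bigr)$. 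Since $\omega(\Theta_i)=\gamma_i<0$ and $0<j<p$, the term of largest $\Theta_i$-degree, namely $j\,g\Theta_i^{\,j-1}$ with $\omega(j)=0$, dominates, so this value equals $\omega(g\Theta_i^{\,j-1})=\omega(g\Theta_i^{\,j})-\gamma_i\ge-\gamma_i>-s$ (generators with $j=0$ are fixed by $\sigma_1$). Taking the minimum over the finitely many generators appearing, and using $\omega(a_k)\ge 0$ for the $\mathcal O_{\nu}$-coefficients, gives $\omega(\sigma_1(x)-x)>-s$. (One may equivalently read this off the formula for $\omega(\sigma_j(h)-h)$ obtained inside the proof of Corollary \ref{Cor11}.) Conversely, since $L/K$ is immediate, $\gamma_i\in\nu K$, so I can choose $g_i\in K$ with $\omega(g_i)=-\gamma_i$; then $x=g_i\Theta_i\in\mathcal O_{\omega}$ because $\omega(g_i\Theta_i)=0$, while $\sigma_1(x)-x=g_i$, so $\omega(\sigma_1(x)-x)=-\gamma_i$, and $-\gamma_i\to -s$ from above.

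Combining these: if $\alpha\le -s$, the bound gives $\omega(\sigma_1(x)-x)>-s\ge\alpha$ for all relevant $x$, so $\sigma_1\in G_{I_{\alpha}}$ and $G_{I_{\alpha}}=\mbox{Gal}(L/K)$; if $\alpha>-s$, pick $i$ with $-\gamma_i<\alpha$, so $x=g_i\Theta_i$ satisfies $\omega(\sigma_1(x)-x)=-\gamma_i<\alpha$, whence $\sigma_1\notin G_{I_{\alpha}}$ and $G_{I_{\alpha}}=\{1\}$. As $-s=-{\rm dist}(\omega/\nu)$, this is the claimed dichotomy. The delicate point is the first bound, which must hold uniformly over all of $\mathcal O_{\omega}$ and not merely the generators: this works because the estimate $-\gamma_i>-s$ is strict and each element of $\mathcal O_{\omega}$ is a finite combination of generators, which is also exactly what places the boundary value $\alpha=-s$ on the $\mbox{Gal}(L/K)$ side.
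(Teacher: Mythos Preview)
Your proof is correct and follows essentially the same approach as the paper's. The paper obtains the lower bound $\omega(\sigma(h)-h)>-\mbox{dist}(\omega/\nu)$ by invoking the single-$\Theta_i$ expansion from Proposition~\ref{Prop11} and the formula (\ref{eq15}) inside the proof of Corollary~\ref{Cor11}, whereas you decompose via the $\mathcal O_{\nu}$-generators of Corollary~\ref{Cor10} and bound each term; for the other direction, the paper appeals to density of $\nu K$ in $\RR$ to manufacture an element with small $\omega(\sigma_1(x)-x)$, while you use immediacy ($\omega L=\nu K$) to pick $g_i\in K$ with $\omega(g_i)=-\gamma_i$ exactly, which is a touch cleaner but amounts to the same idea.
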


\begin{proof} Suppose that $h\in \mathcal O_{\omega}$. Then $h$ has an expression of the form (\ref{eq13*}) of Proposition \ref{Prop11} such that (\ref{eq12*}) holds. By the calculation of (\ref{eq14*}) of the proof of Corollary \ref{Cor10} we have that for $i\gg 0$,
\begin{equation}\label{eq16}
 0\le \omega(h)=\min\{\omega(g_j(c_i)\Theta_i^{r-j})\mid 0\le j\le r\}.
 \end{equation}
 By the calculation of (\ref{eq15}) of the proof of Corollary \ref{Cor11} and (\ref{eq16}),  we have that for $\mbox{id}\ne \sigma\in \mbox{Gal}(L/K)$ and $i\gg 0$,
$$
 \omega(\sigma(h)-h)=\min\{\omega(g_l(c_i)\Theta_i^{r-l-1})\mid 0\le l\le r-1\}\ge -\omega(\Theta_i)> -\mbox{dist}(\omega/\nu).
 $$
 Thus $G_{\alpha}=\mbox{Gal}(L/K)$ for $\alpha\le -\mbox{dist}(\omega/\nu)$.
 
 Given $\epsilon>0$ there exists $c\in K$ such that $\omega(\Theta-c)>\mbox{dist}(\omega/\nu)-\frac{\epsilon}{2}$. Let $\overline{\Theta}=\Theta-c$. The group $\omega K$
  is dense in $\RR$, so there exists $g\in K$ such that 
 $$
 0\le \omega(g\overline\Theta)<\frac{\epsilon}{2}.
 $$
 We have that $\sigma_1(g\overline\Theta)-g\overline \Theta=g$, so
 $$
 \omega(\sigma_1(g\overline\Theta)-g\overline\Theta)=\omega(g)<-\mbox{dist}(\omega/\nu)+\epsilon.
 $$
 Thus $G_{\alpha}=\{1\}$ for $\alpha>-\mbox{dist}(\omega/\nu)$.
\end{proof}

As a consequence of the above corollary, we obtain the following theorem.

\begin{Theorem}\label{ramcut}(Kuhlmann and Piltant \cite{KP}) Continuing with our assumption that $L/K$ is a defect Artin-Schreier extension and $\omega$ has rank 1, let ${\rm Ram}(\omega/\nu)$ be the ramification cut of $L/K$ defined in subsection \ref{Galois}. Then 
$$
{\rm dist}(\omega/\nu)^{-}\cap \nu K={\rm dist}(\Theta,K)\cap \nu K=-{\rm Ram}(\omega/\nu)\cap \nu K.
$$
\end{Theorem}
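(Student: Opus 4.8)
The plan is to derive the statement from Corollary \ref{Cor12}, which already computes the higher ramification groups $G_{I_\alpha}$ in terms of ${\rm dist}(\omega/\nu)$, together with the description of $\omega(\Theta-K)$ recalled above. Write $s={\rm dist}(\omega/\nu)\le 0$, so that ${\rm dist}(\Theta,K)\uparrow\RR=s^-$ by the definition in Subsection \ref{Rank1AS}, and recall that, since $L/K$ is a defect Artin-Schreier extension, it is immediate, so $\omega L=\nu K$.

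First I would prove the left-hand equality ${\rm dist}(\omega/\nu)^-\cap\nu K={\rm dist}(\Theta,K)\cap\nu K$, which is essentially a restatement of the definition of ${\rm dist}(\omega/\nu)$. Here $\Lambda^L(\Theta,K)=\omega(\Theta-K)$ is an initial segment of $\nu K$ with no largest element. Since the least initial segment of $\RR$ in which $\Lambda^L(\Theta,K)$ is cofinal is $\{t\in\RR:t<s\}$, one gets $\Lambda^L(\Theta,K)=\{\alpha\in\nu K:\alpha<s\}$: it is contained in that set and does not contain $s$, while any $\alpha\in\nu K$ with $\alpha<s$ lies below some element of $\Lambda^L(\Theta,K)$ (else $\alpha$ would be an upper bound strictly below $s$) and hence lies in the initial segment $\Lambda^L(\Theta,K)$. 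Because $\nu K$ is dense in $\RR$, the induced cut satisfies $\Lambda^L(\Theta,K)\uparrow\widetilde{\nu K}=\{\beta\in\widetilde{\nu K}:\beta<s\}$, whose intersection with $\nu K$ is again $\{\alpha\in\nu K:\alpha<s\}$. Thus ${\rm dist}(\Theta,K)\cap\nu K$ and $s^-\cap\nu K={\rm dist}(\omega/\nu)^-\cap\nu K$ have the same left part, hence are equal.

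Next I would compute the ramification cut. By Corollary \ref{Cor12}, for $\alpha\in\RR$ one has $G_{I_\alpha}=\{1\}$ if and only if $\alpha>-s$; since $\omega L=\nu K$ this gives $\{\alpha\in\omega L:G_{I_\alpha}=1\}=\{\alpha\in\nu K:\alpha>-s\}$. Using density of $\nu K$ in $\RR$, this set has no minimal element and infimum $-s$, so the smallest final segment of $\widetilde{\nu K}$ in which it is coinitial is $\{\beta\in\widetilde{\nu K}:\beta>-s\}$; hence ${\rm Ram}(\omega/\nu)$ is the cut of $\widetilde{\nu K}$ with left part $\{\beta\in\widetilde{\nu K}:\beta\le -s\}$. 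Negating, according to the conventions for $-\Lambda$ recalled in the subsection on initial and final segments and cuts, $-{\rm Ram}(\omega/\nu)$ is the cut with left part $\{\gamma\in\widetilde{\nu K}:\gamma<s\}$ and right part $\{\gamma\in\widetilde{\nu K}:\gamma\ge s\}$, that is, $s^-$. Intersecting with $\nu K$ gives $-{\rm Ram}(\omega/\nu)\cap\nu K=s^-\cap\nu K$, and together with the previous step this proves the chain of equalities.

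The only point requiring genuine care is the cut bookkeeping: distinguishing $\gamma^+$ from $\gamma^-$ when it is not clear whether $\gamma\in\widetilde{\nu K}$ (the two coincide exactly when $\gamma\notin\widetilde{\nu K}$, which is precisely why the identities are clean after intersecting with $\nu K$), and checking that $\{\beta\in\widetilde{\nu K}:\beta>-s\}$ really is the smallest final segment in which $\{\alpha\in\nu K:\alpha>-s\}$ is coinitial. All the substantive valuation theory is already contained in Corollary \ref{Cor12}; once that is available, the argument is a formal manipulation of cuts.
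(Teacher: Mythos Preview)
Your proof is correct and follows the same approach as the paper: both derive the result from Corollary \ref{Cor12}, after which only cut bookkeeping remains. The paper's own proof is terser—it does not spell out the first equality ${\rm dist}(\omega/\nu)^{-}\cap\nu K={\rm dist}(\Theta,K)\cap\nu K$ (treating it as immediate from the definition in Subsection \ref{Rank1AS}) and passes through $\RR$ rather than $\widetilde{\nu K}$ for the ramification side—but the substance is identical, and your more explicit handling of the density of $\nu K$, the immediacy $\omega L=\nu K$, and the negation of cuts is a welcome expansion of what the paper leaves to the reader.
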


\begin{proof} By Corollary \ref{Cor12}
$$
({\rm Ram}(\omega/\nu)\uparrow \RR)^R=\{\alpha\in \RR\mid 
G_{I_{\alpha}}=1\}=\cup\{\alpha\in \RR\mid \alpha>-\mbox{dist}(\omega/\nu)\}.
$$
Thus 
$$
(-{\rm Ram}(\omega/\nu)\uparrow\RR)^L=\{\alpha\in\RR\mid \alpha<\mbox{dist}(\omega/\nu)\}
$$
and so $\mbox{dist}(\Theta,K)\cap \nu K=-{\rm Ram}(\omega/\nu)\cap \nu K$.
\end{proof}

\begin{Lemma}\label{Lemma20}(Kuhlmann and Piltant, \cite{KP})
Suppose that $K$ and $L$ are two dimensional algebraic function fields over an algebraically closed field $k$ of characteristic $p>0$ and $K\rightarrow L$ is an Artin-Schreier extension. Let $\omega$ be a rational rank one nondiscrete valuation of $L$ and let $\nu$ be the restriction of $\omega$ to $K$. Suppose that $L$ is a defect extension of $K$.

Suppose that $R$ is a regular algebraic local ring of $K$ and $S$ is a regular algebraic local ring of $L$ such that $\omega$ dominates $S$, $S$ dominates $R$ and $R\rightarrow S$ is of type 1 or 2.
Inductively applying Theorems \ref{TheoremA} and  \ref{TheoremB}, we construct a diagram where the horizontal sequences are birational extensions of regular local rings
\begin{equation}\label{eq23*}
\begin{array}{ccccccc}
S=S_0&\rightarrow &S_1&\rightarrow & S_2&\rightarrow & \cdots\\
\uparrow&&\uparrow&&\uparrow&&\\
R=R_0&\rightarrow &R_1&\rightarrow & R_2&\rightarrow & \cdots
\end{array}
\end{equation}
with $\cup_{i=1}^{\infty}S_i=\mathcal O_{\omega}$. 
Further assume that for each map $R_i\rightarrow S_i$, there are regular parameters $u,v$ in $R_i$ and $x,y$ in $S_i$ such that one of the following forms hold:
\begin{equation}\label{eq20*}
u=x, v =f
\end{equation}
where $\dim_kS_i/(x,f)=p$, or
\begin{equation}\label{eq21*}
u=\delta x^p, v = y
\end{equation}
where $\delta$ is a unit in $S_i$ and  in both cases that  $x=0$ is a local equation of the critical locus of $\mbox{Spec}(S_i)\rightarrow \mbox{Spec}(R_i)$.

 Let
 $$
 J_i=J(S_i/R_i)=(\frac{\partial u}{\partial x}\frac{\partial v}{\partial y}-\frac{\partial u}{\partial y}\frac{\partial v}{\partial x})
 $$
 be the Jacobian ideal of the map $R_i\rightarrow S_i$.

 Then there exists $c>0$ such that $J_i =x^cS_i$ (since the critical locus is supported on $x=0$).
  Let $\sigma$ be a generator of $\mbox{Gal}(L/K)$. Then 
  \begin{enumerate}
  \item[1)] $\omega(\sigma(y)-y)=\frac{c}{p-1}\omega(x)=\frac{1}{p-1}\omega(J_i)$ if (\ref{eq20*}) holds,
  \item[2)] $\omega(\sigma(x)-x)=\frac{c}{p-1}\nu(x)=\frac{1}{p-1}\omega(J_i)$ if (\ref{eq21*}) holds.
  \end{enumerate}
  \end{Lemma}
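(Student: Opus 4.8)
The plan is to show that, up to a unit of $S_i$, the generator of the Jacobian ideal $J_i$ is the ``different'' element $\prod_{j=1}^{p-1}\big(\theta-\sigma^{j}(\theta)\big)$, where $\theta:=y$ in case (\ref{eq20*}) and $\theta:=x$ in case (\ref{eq21*}), and then to read off the conclusion from Corollary \ref{Cor11}. Granting this, note that $\theta\in m_{S_i}\subset\mathcal O_{\omega}$, and that, $L/K$ being a defect extension, $\omega$ is the unique extension of $\nu$, so each $\sigma^{j}(\theta)$ again lies in $\mathcal O_{\omega}$; Corollary \ref{Cor11} then yields $\omega\big(\theta-\sigma^{j}(\theta)\big)=\omega\big(\theta-\sigma(\theta)\big)$ for every $j\not\equiv 0\pmod p$, hence
$$
\omega(J_i)=\sum_{j=1}^{p-1}\omega\big(\sigma^{j}(\theta)-\theta\big)=(p-1)\,\omega\big(\sigma(\theta)-\theta\big).
$$
Since $J_i=x^{c}S_i$, this is precisely the assertion $\omega(\sigma(\theta)-\theta)=\tfrac{c}{p-1}\omega(x)=\tfrac{1}{p-1}\omega(J_i)$, which is statement 1) in case (\ref{eq20*}) ($\theta=y$) and statement 2) in case (\ref{eq21*}) ($\theta=x$).

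To obtain the identification of $J_i$, I treat case (\ref{eq20*}) first. Here $\partial u/\partial x=1$ and $\partial u/\partial y=0$, so $J_i=(\partial v/\partial y)$. Since $L=\mathrm{Frac}(S_i)\supsetneq K$ we have $y\notin K$, and as $[K(y):K]\mid p$ this forces $L=K(y)$; let $\phi\in K[T]$ be the (monic, degree $p$) minimal polynomial of $y$, so that $\phi(T)=\prod_{j=0}^{p-1}(T-\sigma^{j}(y))$ and $\phi'(y)=\prod_{j=1}^{p-1}(y-\sigma^{j}(y))$. It remains to check $J_i=(\phi'(y))$, and I would do this by passing to completions $\widehat{R_i}=k[[u,v]]$, $\widehat{S_i}=k[[x,y]]$: because $u=x$, Weierstrass preparation applied to the power series $v(x,T)-v\in\widehat{S_i}[[T]]$ --- whose reduction modulo $m_{\widehat{S_i}}$ is $v(0,T)$, of $T$-order exactly $p$ by the hypothesis $\dim_k S_i/(x,f)=p$ --- produces a monic degree-$p$ polynomial annihilating $y$; miracle flatness shows $\widehat{S_i}$ is free of rank $p$ over $\widehat{R_i}$ and equals $\widehat{R_i}[y]$, so that polynomial is the minimal polynomial of $y$ over $\mathrm{Frac}(\widehat{R_i})$, and a degree count (using $L\cap\mathrm{Frac}(\widehat{R_i})=K$) identifies it with $\phi$; thus $\phi\in\widehat{R_i}[T]\cap K[T]=R_i[T]$, $R_i[y]=R_i[T]/(\phi)$, and since $S_i$ is a localization of $R_i[y]$ one gets $\Omega_{S_i/R_i}=S_i/(\phi'(y))$, i.e. $J_i=(\phi'(y))$.

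Case (\ref{eq21*}) is entirely symmetric, with the roles of $x$ and $y$ exchanged: now $\partial v/\partial y=1$, $\partial v/\partial x=0$, so $J_i=(\partial u/\partial x)$, and $\partial u/\partial x=x^{p}\,\partial\delta/\partial x$ because $p=0$ in $k$; one has $x\notin K$, $L=K(x)$, and the same Weierstrass/miracle-flatness argument applied to $\delta(T,y)T^{p}-u\in\widehat{R_i}[[T]]$ (whose reduction has $T$-order $p$) gives $\widehat{S_i}=\widehat{R_i}[x]=\widehat{R_i}[T]/(\phi(T))$ with $\phi$ the minimal polynomial of $x$ over $K$, whence $J_i=(\phi'(x))=\big(\prod_{j=1}^{p-1}(x-\sigma^{j}(x))\big)$. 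The main obstacle is exactly this middle step --- checking that $J_i$ is preserved under completion, that $\widehat{S_i}$ is module-finite and monogenic over $\widehat{R_i}$ (this is where the exponent $p$ enters, via Weierstrass preparation and the normalizations $\dim_k S_i/(x,f)=p$ resp. $u=\delta x^{p}$), and that the resulting Weierstrass polynomial is the minimal polynomial of $\theta$ over $K$ with coefficients in $R_i$. Once the identity $J_i=(\phi'(\theta))$ is in hand, the factorization of $\phi'$ and the appeal to Corollary \ref{Cor11} in the first paragraph finish the proof.
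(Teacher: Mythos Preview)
Your strategy matches the paper's exactly: identify $J_i$ with the different $(\phi'(\theta))$, where $\phi$ is the minimal polynomial over $K$ of $\theta\in\{x,y\}$, factor $\phi'(\theta)=\prod_{j=1}^{p-1}(\theta-\sigma^j(\theta))$, and invoke Corollary~\ref{Cor11} to finish. The only substantive difference is in how the identification $J_i=(\phi'(\theta))$ (equivalently $S_i=R_i[\theta]$) is obtained. The paper bypasses completions entirely: since $L/K$ is a defect extension, $\omega$ is the unique extension of $\nu$, so the integral closure $\bar S$ of $R_i$ in $L$ is local; normality of $S_i$ gives $\bar S\subset S_i$, while quasi-finiteness of $R_i\to S_i$ (via Zariski's Main Theorem, using that $\bar S$ is local) gives $S_i\subset\bar S$, so $S_i=\bar S$ is finite over $R_i$. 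A one-line Nakayama argument (from $\theta^p\in m_{R_i}S_i$) then yields $S_i=R_i+R_i\theta+\cdots+R_i\theta^{p-1}$, and integrality of $\theta$ over the normal ring $R_i$ forces $\phi\in R_i[T]$, whence $S_i\cong R_i[T]/(\phi)$ and $\Omega^1_{S_i/R_i}\cong S_i/(\phi'(\theta))$.

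Your completion/Weierstrass/miracle-flatness route reaches the same endpoint but is longer, and two steps need tightening. First, the Weierstrass preparation must be carried out over $\hat R_i$, not $\hat S_i$ (and the reduction is modulo $m_{\hat R_i}$); otherwise the resulting Weierstrass polynomial has coefficients in the wrong ring. Second, the clause ``since $S_i$ is a localization of $R_i[y]$'' is asserted without proof. One clean fix: once you know $\hat R_i[\theta]=\hat S_i$, the semilocal ring $R_i[\theta]$ has $\hat R_i\otimes_{R_i}R_i[\theta]=\hat S_i$ a domain, so $R_i[\theta]$ is in fact local with completion $\hat S_i$; then faithful flatness of completion gives $R_i[\theta]=\hat S_i\cap L=S_i$ (both intersections taken inside $\mathrm{Frac}(\hat S_i)$). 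With these repairs your argument is correct, though the paper's integral-closure route is shorter and more transparent.
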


\begin{proof} We  prove the first statement 1). The proof of the second statement is similar. Suppose that a form (\ref{eq20*}) holds.
 Let $N$ be the $S_i$-ideal $N=\mbox{Ann}_{S_i}(\Omega^1_{S_i/R_i})$.

Since $\omega$ is the unique extension of $\nu$ to $L$ and $R_i\rightarrow S_i$ is quasi finite with complexity $p=[L:K]$, we have that $S_i$ is the integral closure of $R_i$ in $L$ and is thus a finite $R_i$-module.
 There exists a unit $\delta\in S_i$ and $\gamma\in S_i$ such that 
 $$
 u=x, v=\delta y^p+x\gamma.
 $$
 Let $M$ be the $R_i$-module $M=R_i+R_iy+\cdots+R_iy^{p-1}$. We have that
 $$
 y^p=\delta^{-1}v-u\delta^{-1}\gamma\in (u,v)S_i
 $$
 and $x=u\in (u,v)S_i$ 
 so $S_i=M+(u,v)S_i$. Thus $S_i=M$ by Nakayama's lemma. Let $f(t)\in K[t]$ be the minimal polynomial of $y$ over $K$. 
 The polynomial $f(t)$ has degree $p$ since $[L:K]=p$. 
 Since $R_i$ is normal and $y$ is integral over $R_i$, by Theorem 4 on page 260 of \cite{ZS1}, the coefficients of  $f(t)$ are in $R_i$, and thus $S_i\cong R_i[t]/(f(t))$.  We  have an isomorphism of $S_i$-modules
 $$
 \Omega^1_{S_i/R_i}\cong S_i/f'(y)S_i
 $$
 where $f'(t)=\frac{df}{dt}$. Thus  $N=(f'(y))$. We compute $N$ in another way, from the right exact sequence 
 $$
 \Omega^1_{R_i/k}\otimes_{R_i}S_i\rightarrow \Omega^1_{S_i/k}\rightarrow \Omega^1_{S_i/R_i}\rightarrow 0,
 $$
 showing that we have a presentation
 $$
 S_i^2\stackrel{A}{\rightarrow}S_i^2\rightarrow \Omega^1_{S_i/R_i}\rightarrow 0,
$$
where
$$
A=\left(\begin{array}{cc} \frac{\partial u}{\partial x}& \frac{\partial u}{\partial y}\\
\frac{\partial v}{\partial x}& \frac{\partial v}{\partial y}
\end{array}\right)
=\left(\begin{array}{cc}
1&0\\
\frac{\partial v}{\partial x}& \frac{\partial v}{\partial y}
\end{array}\right).
 $$
 From the fact that 
 $$
 \left\{ \left(\begin{array}{l} 1\\\frac{\partial v}{\partial x}\end{array}\right), \left(\begin{array}{l}0\\1\end{array}\right)\right\}
 $$
 is an $S_i$-basis of $S_i^2$, 
 we see that $\Omega^1_{S_i/R_i}\cong S_i/\frac{\partial v}{\partial y}S_i$, and so $N=(\frac{\partial v}{\partial y})=J_i=(x^c)$.
 Factoring 
 $$
 f(t)=\prod_{\tau\in \mbox{Gal}(L/K)}(t-\tau(y))
 $$
  in $L[t]$, we see that 
 $$
 f'(y)=\prod_{id\ne \tau\in \mbox{Gal}(L/K)}(y-\tau(y)).
 $$
 Thus 
 $$
 c\omega(x)=\sum_{id\ne \tau\in \mbox{Gal}(L/K)}\omega(\tau(y)-y)=(p-1)\omega(\sigma(y)-y)
 $$
 by Corollary \ref{Cor11}.
 \end{proof}

 \begin{Proposition}\label{Prop100} (Kuhlmann and Piltant, \cite{KP}) Let assumptions be as in Lemma \ref{Lemma20}. Then the distance $\mbox{dist}(\omega/\nu)$ is computed by the formula
 $$
 -{\rm dist}(\omega/\nu)=\frac{1}{p-1}\inf_i\{\omega(J(S_i/R_i))\}   
 $$
 where the infimum is over the $R_i\rightarrow S_i$ in the sequence  (\ref{eq23*}).
\end{Proposition}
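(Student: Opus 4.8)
The plan is to combine Corollary \ref{Cor12}, which identifies the higher ramification groups $G_{I_\alpha}$ of $L/K$ in terms of ${\rm dist}(\omega/\nu)$, with Lemma \ref{Lemma20}, which computes $\omega(\sigma(\xi_i)-\xi_i)$ in terms of $\omega(J(S_i/R_i))$ for the distinguished parameter $\xi_i$ of $S_i$ (namely $\xi_i=y$ when the form (\ref{eq20*}) holds for $R_i\to S_i$, and $\xi_i=x$ when the form (\ref{eq21*}) holds). Fix once and for all a generator $\sigma$ of $\mbox{Gal}(L/K)$. Since $\mbox{Gal}(L/K)$ is cyclic of prime order $p$, every proper subgroup is trivial, so each $G_{I_\alpha}$ is either $\{1\}$ or all of $\mbox{Gal}(L/K)$, and it equals $\mbox{Gal}(L/K)$ exactly when $\sigma\in G_{I_\alpha}$, that is, exactly when $\omega(\sigma(h)-h)\ge\alpha$ for every $h\in\mathcal O_\omega$.

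First I would convert Corollary \ref{Cor12} into a closed formula for the distance. By the remark just made, for $\alpha\in\RR$ one has $\alpha\le\inf_{h\in\mathcal O_\omega}\omega(\sigma(h)-h)$ if and only if $G_{I_\alpha}=\mbox{Gal}(L/K)$, and by Corollary \ref{Cor12} the latter holds if and only if $\alpha\le-{\rm dist}(\omega/\nu)$. Since this equivalence holds for every real $\alpha$ (and $-{\rm dist}(\omega/\nu)$ is a nonnegative real number for a defect extension),
$$
-{\rm dist}(\omega/\nu)=\inf_{h\in\mathcal O_\omega}\omega(\sigma(h)-h).
$$
It therefore suffices to show that this infimum equals $\frac{1}{p-1}\inf_i\omega(J(S_i/R_i))$, the infimum being taken over the extensions $R_i\to S_i$ in the sequence (\ref{eq23*}).

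The inequality ``$\le$'' comes from testing against $h=\xi_i$: each $\xi_i$ is a regular parameter of $S_i$, hence lies in $m_\omega\subset\mathcal O_\omega$, and Lemma \ref{Lemma20} gives $\omega(\sigma(\xi_i)-\xi_i)=\frac{1}{p-1}\omega(J(S_i/R_i))$; taking the infimum over $i$ yields $\inf_h\omega(\sigma(h)-h)\le\frac{1}{p-1}\inf_i\omega(J(S_i/R_i))$. For ``$\ge$'', let $h\in\mathcal O_\omega$; since $\bigcup_i S_i=\mathcal O_\omega$ there is $j$ with $h\in S_j$. The key ring-theoretic input is that $S_j=R_j[\xi_j]$ is a free $R_j$-module of rank $p$ with basis $1,\xi_j,\dots,\xi_j^{p-1}$; for the form (\ref{eq20*}) this is proved in the proof of Lemma \ref{Lemma20}, and for the form (\ref{eq21*}) the same Nakayama argument applies: $S_j$ is the integral closure of $R_j$ in $L$ (hence a finite $R_j$-module), the minimal polynomial of $\xi_j=x$ over $K$ is monic of degree $p$ with coefficients in $R_j$ because $R_j$ is normal, and $S_j/m_{R_j}S_j\cong k[\xi_j]/(\xi_j^p)$, so $S_j=R_j[\xi_j]+m_{R_j}S_j$. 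Granting this, write $h=g(\xi_j)$ with $g\in R_j[t]$ and use the polynomial identity $g(a)-g(b)=(a-b)P(a,b)$ with $P\in R_j[a,b]$. Since $\sigma$ fixes $K\supseteq R_j$ pointwise, $\sigma(h)-h=(\sigma(\xi_j)-\xi_j)\,P(\sigma(\xi_j),\xi_j)$; moreover $\omega\circ\sigma=\omega$ because $\omega$ is the unique extension of $\nu$ to $L$, so $\sigma(\xi_j)\in\mathcal O_\omega$ and therefore $P(\sigma(\xi_j),\xi_j)\in\mathcal O_\omega$. Hence
$$
\omega(\sigma(h)-h)\ge\omega(\sigma(\xi_j)-\xi_j)=\frac{1}{p-1}\omega(J(S_j/R_j))\ge\frac{1}{p-1}\inf_i\omega(J(S_i/R_i)),
$$
and combining the two inequalities with the displayed formula for ${\rm dist}(\omega/\nu)$ finishes the proof.

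The step I expect to be the main obstacle is showing that the single distinguished parameter $\xi_j$ generates $S_j$ as an $R_j$-algebra, i.e. $S_j=R_j[\xi_j]$; this is exactly what lets one reduce the action of $\sigma$ on an arbitrary element of $\mathcal O_\omega$ to its action on $\xi_j$, and it is the only point where the precise shape of the normal forms (\ref{eq20*}) and (\ref{eq21*}) enters (through Nakayama's lemma and the computation of $S_j/m_{R_j}S_j$). Everything else is formal: the translation of Corollary \ref{Cor12} into the expression for ${\rm dist}(\omega/\nu)$, and the two short estimates bounding $\inf_{h\in\mathcal O_\omega}\omega(\sigma(h)-h)$ above and below by $\frac{1}{p-1}\inf_i\omega(J(S_i/R_i))$.
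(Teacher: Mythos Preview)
Your proposal is correct and follows essentially the same route as the paper's proof: both use Corollary \ref{Cor12} to rewrite $-{\rm dist}(\omega/\nu)$ as $\inf_{h\in\mathcal O_\omega}\omega(\sigma(h)-h)$, invoke the fact (from the proof of Lemma \ref{Lemma20}) that $S_j=R_j[\xi_j]$ to write an arbitrary $h\in S_j$ as a polynomial in $\xi_j$ with coefficients in $R_j$, and then factor $\sigma(h)-h$ through $\sigma(\xi_j)-\xi_j$ to reduce to Lemma \ref{Lemma20}. The only cosmetic difference is that the paper factors each power $\sigma(\xi_j)^s-\xi_j^s$ separately while you use the single identity $g(a)-g(b)=(a-b)P(a,b)$, and the paper cites Corollary \ref{Cor11} in addition to Corollary \ref{Cor12}, whereas you absorb that step into the observation that $\mbox{Gal}(L/K)$ has no nontrivial proper subgroups.
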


\begin{proof}  Let $\sigma$ be a generator of $\mbox{Gal}(L/K)$. Since $\cup S_i=\mathcal O_{\omega}$, we have that  
$$
-{\rm dist}(\omega/\nu)=\inf\{\omega(\sigma(h)-h)\mid h\in S_i\mbox{ for some $i$}\}
$$
by Corollaries \ref{Cor12} and \ref{Cor11}. By the proof of Lemma \ref{Lemma20}, $h\in S_i$ implies there exists a polynomial $f(t)\in R_i[t]$
of degree $<p$ such that $h=f(z_i)$, where $z_i=y$ if $R_i\rightarrow S_i$ is in case (\ref{eq20*}), $z_i=x$ if $R_i\rightarrow S_i$ is in case (\ref{eq21*}). Thus we have an expression
$$
h=a_0z_i^{p-1}+a_1z_i^{p-2}+\cdots+a_{p-1}
$$
with $a_0,\ldots,a_{p-1}\in R_i$. For $s\ge 1$, we have a factorization
$$
\sigma(z_i)^s-z_i^s=(\sigma(z_i)-z_i)(\sigma(z_i)^{s-1}+z_i\sigma(z_i)^{s-2}+\cdots+z_i^{s-1}).
$$
The valuation $\omega$ is the unique extension of $\nu$ to $L$, so $\omega(\sigma(z_i))=\omega(z_i)\ge 0$. Thus
$\omega(\sigma(z_i)^s-z_i^s)\ge \omega(\sigma(z_i)-z_i)$ for all $s\ge 1$. We have that
$$
\sigma(h)-h=a_0(\sigma(z_i)^{p-1}-z_i^{p-1})+\cdots+a_{p-2}(\sigma(z_i)-z_i)
$$
so $\omega(\sigma(h)-h)\ge \omega(\sigma(z_i)-z_i)$. Thus
$$
-{\rm dist}(\omega/\nu)=\inf\{\omega(\sigma(z_i)-z_i)\}.
$$
The proposition now follows from Lemma \ref{Lemma20}.
\end{proof}

\end{document}